\documentclass[12pt,a4paper]{amsart}

\usepackage[top=35mm,bottom=35mm,left=30mm,right=30mm]{geometry}
\usepackage{amssymb}
\usepackage{enumitem}
\usepackage{mathptmx}
\usepackage{mathrsfs}
\usepackage{xcolor}
\usepackage[colorlinks=true,linkcolor=blue,
citecolor=blue,urlcolor=blue]{hyperref}

\newtheorem{theorem}{Theorem}[section]
\newtheorem{proposition}[theorem]{Proposition}
\newtheorem{lemma}[theorem]{Lemma}
\newtheorem{corollary}[theorem]{Corollary}
\newtheorem{question}{Question}
\newtheorem{condition}{Condition}

\newtheorem*{condition1}{Condition 1}
\newtheorem*{condition2}{Condition 2}
\newtheorem*{step1}{Step 1}
\newtheorem*{step2}{Step 2}
\newtheorem*{case1}{Case 1}
\newtheorem*{case2}{Case 2}

\theoremstyle{definition}
\newtheorem{definition}[theorem]{Definition}
\newtheorem{fact}[theorem]{Fact}
\newtheorem{example}[theorem]{Example}

\theoremstyle{remark}
\newtheorem{remark}[theorem]{Remark}

\newcommand{\scl}{\mathrm{scl}}

\numberwithin{equation}{section}

\begin{document}

	\title{Entropy Scales in Topological Dynamical Systems}

	\author[Yunxiang Xie, Ercai Chen and Xiaoyao Zhou]
	{Yunxiang Xie, Ercai Chen and Xiaoyao Zhou\textsuperscript{*}}

	\thanks{\textsuperscript{*}Corresponding author.}

	\address{School of Mathematical Sciences, Ministry of Education Key Laboratory of
		NSLSCS, Nanjing Normal University, Nanjing 210023, Jiangsu, P.R. China}

	\email{yxxie20@126.com}
	\email{ecchen@njnu.edu.cn}
	\email{zhouxiaoyaodeyouxian@126.com}

	\subjclass[2020]{37A15, 37A35, 37C45}

	\keywords{Entropy scales; Bowen entropy; variational principle;
		factor maps; induced measure systems; sequence entropy}
	\begin{abstract}
		Motivated by Helfter's notion of scaling, we define Bowen, upper capacity,
		and local measure-theoretic entropy scales. Under a controlled decay
		condition, we establish a variational principle on compact subsets by
		combining a Billingsley-type theorem with a Frostman-type construction. We
		also prove factor inequalities for upper capacity entropy scales on compact
		sets and for Bowen entropy scales on arbitrary subsets. For induced systems
		on spaces of probability measures, we give sufficient conditions for the
		preservation of zero upper capacity entropy scales and show, under an
		additional comparison condition, that positivity for the original system
		forces the induced entropy scale to be infinite. Finally, we define upper
		capacity entropy scales along prescribed observation times and establish the
		corresponding zero-level equivalence for induced systems.

	\end{abstract}


	\maketitle
	\tableofcontents
	\section{Introduction and main results}

	Let $(X,T)$ be a topological dynamical system (TDS for short), where
	$(X,d)$ is a compact metric space and $T:X\to X$ is a continuous self-map.
	Denote by $\mathcal D(X)$ the set of metrics compatible with the topology of
	$X$. Let $\mathscr M(X)$ and  $\mathscr M_T(X)$ denote the
	sets of Borel probability measures and $T$-invariant Borel probability measures
	on  $X$.

	A fundamental problem in dynamical systems is to quantify the complexity of
	orbit structures. From an information-theoretic viewpoint, Kolmogorov
	\cite{Kol58} and Sinai \cite{Sin59} introduced measure-theoretic entropy to
	describe the average production of information, while Adler, Konheim and
	McAndrew \cite{AKM65} subsequently introduced topological entropy to measure
	the exponential growth of distinguishable orbits. A fundamental connection
	between these two viewpoints is provided by the variational principle,
	established through the works of Goodwyn, Dinaburg and Goodman
	\cite{Din70,Goo69,Goo71}:
	\begin{align}\label{equ 1.1}
		h_{top}(T)
		=
		\sup_{\mu\in\mathscr M_T(X)}h_\mu(T),
	\end{align}
	where \(h_{top}(T)\) and \(h_\mu(T)\) denote the topological and
	measure-theoretic entropies, respectively. The variational principle has
	since become a central theme in ergodic theory and has been extended to
	topological pressure \cite{PP84,Wal82}, non-compact subsets
	\cite{Bow73,FH12}, and actions of amenable groups \cite{OW87,ST80}.
	These developments demonstrate that dynamical complexity admits compatible
	topological and measure-theoretic descriptions in a wide range of settings.

	In 1973, inspired by the Carath\'{e}odory construction in dimension theory,
	Bowen \cite{Bow73} extended the notion of topological entropy from the whole
	space to arbitrary subsets of the phase space, leading to what is now known
	as Bowen topological entropy. This construction provides a natural bridge
	between entropy theory and dimension theory, allowing dimension-theoretic
	methods to be applied in the study of dynamical complexity.
	In 2012, Feng and Huang \cite{FH12} established a variational principle for
	Bowen topological entropy in terms of lower Brin--Katok local entropy.
	They proved that for every non-empty compact subset
	$K\subset X$,
	\begin{align}\label{equ 1.2}
		h_{top}^B(T,K)
		=
		\sup
		\left\{
		\underline h_\mu(T):
		\mu\in\mathscr M(X),\mu(K)=1
		\right\},
	\end{align}
	where $\underline h_\mu(T)$ denotes the lower Brin--Katok local entropy of
	$\mu$.
	This variational principle demonstrates a fundamental relationship between
	topological complexity and measure-theoretic information at the level of
	subsets. More generally, entropy-type quantities constructed from
	dimension-theoretic methods provide a way to connect geometric properties,
	such as Hausdorff and box dimensions, with dynamical invariants including
	entropy and local entropy \cite{Pes97}.

	A complementary and more flexible approach is to study dynamical complexity
	by varying the underlying scale. In classical dimension theory, Hausdorff,
	box, and packing dimensions are constructed by using parameterized families
	of gauge functions, with the standard choice $\{\epsilon^\alpha\}$ corresponding
	to polynomial scaling behavior.
	More generally, generalized gauge functions allow one to replace classical
	power laws by more flexible scaling behaviors, leading to refined notions of
	Hausdorff and packing dimensions in fractal geometry; see
	\cite{Mat95}. In this framework, the choice of scale determines the
	asymptotic regime at which the complexity of a set is measured. This
	observation suggests that analogous ideas can be introduced into dynamical
	systems by allowing the scale in entropy constructions to vary.
	Injecting this idea into dynamical systems leads to a variety of entropy-type
	quantities, which are particularly useful for distinguishing systems with
	zero classical entropy. For instance, the choice
	$\epsilon^\alpha\mapsto e^{-n\alpha}$ gives the classical Bowen entropy
	construction \cite{Bow73,FH12}. By replacing the exponential scale with other
	growth regimes, one obtains different complexity invariants. In particular,
	the scaling
	$
	\{n^{-s}\}_{n\in\mathbb N}$
	gives rise to slow entropy type quantities, which have been studied from both
	measure-theoretic and topological viewpoints
	\cite{Fer97,KT89,KC14}, while
	$\{e^{-n^\alpha s}\}_{n\in\mathbb N}
	$ is related to entropy dimension type quantities \cite{DHP11}.

	A different but complementary refinement of entropy is obtained by changing
	the observation times rather than the normalization of orbit growth. Given an
	increasing sequence
	\(
	\mathsf S=\{t_1<t_2<\cdots\}
	\)
	of non-negative integers, sequence entropy measures dynamical complexity
	along the iterates \(T^{t_j}\). Ku\v{s}nirenko \cite{Kus67} introduced the
	measure-theoretic version and proved that vanishing sequence entropy along
	every increasing sequence characterizes discrete spectrum. Goodman
	\cite{Goo74} subsequently introduced topological sequence entropy.
	Topological sequence entropy is particularly effective for zero-entropy
	systems. Snoha, Ye and Zhang \cite{SYZ20} obtained broad realization results
	for its possible values on one-dimensional continua, with analogous results
	for homeomorphisms and group actions. More recently, Liu, Wang and Wu
	\cite{LWW25} developed relative sequence entropy for amenable group actions
	and related it to relative Kronecker algebras.
	The selection of observation times and the choice of a scaling gauge modify
	two different components of the entropy construction. It is therefore natural
	to combine them and study entropy scales along prescribed observation
	sequences.

	These examples indicate that different notions of entropy can be regarded as
	different manifestations of complexity measured at different scales.
	Moreover, the variational principle of Feng and Huang \cite{FH12} remains
	valid for several particular choices of scales. This motivates the following
	question:

	\begin{question}\label{ques 1}
		Does a variational principle of the form \eqref{equ 1.2}
		hold for a general scaling family satisfying suitable
		structural assumptions?
	\end{question}

	Unlike classical entropy notions, which are usually associated with a fixed
	asymptotic growth regime, entropy scales provide a flexible framework for
	measuring dynamical complexity across different regimes. They include
	classical entropy, slow-entropy-type quantities, and entropy-dimension-type
	quantities as special cases, and may distinguish systems whose classical
	topological entropy vanishes. The purpose of the present paper is not merely
	to replace the power gauge $\epsilon^\alpha$ with a more general family
	$\scl_\alpha$. Rather, we establish a variational framework showing that
	Carath\'{e}odory-type constructions and measure-theoretic local quantities
	remain compatible under suitable structural assumptions on the scaling
	family.

	Earlier scale-dependent approaches to weakly chaotic systems were developed
	by Galatolo. In \cite{Gal03}, he normalized separated- and spanning-set
	growth by a prescribed function and obtained equivalent open-cover
	formulations and conjugacy invariance. In \cite{Gal07}, he introduced
	generalized local Bowen-ball entropies and related them to global covering
	complexity. These constructions are closely related to the capacity and
	local aspects of the present framework.

	This scale-dependent viewpoint was subsequently developed in several
	directions. In 2015, Zhao and Pesin \cite{ZP15,ZP16} introduced scaled
	topological and metric entropies associated with a prescribed scaling
	sequence $a(n)$, thereby providing a framework for studying orbit complexity
	beyond the classical exponential regime. Subsequently, Cheng and Li
	\cite{CL23} introduced scaled pressure and established corresponding
	variational principles, extending this scale-dependent viewpoint to
	thermodynamic formalism. Related resolution-dependent approaches also appear
	in metric mean dimension and information-theoretic descriptions of
	dynamical complexity, where variational principles connect orbit growth with
	rate-distortion quantities; see \cite{LT18,Wang21}.

	A foundational development arose in geometric measure theory. Helfter
	\cite{Hel25a} introduced a \emph{scaling}, namely, a one-parameter family
	\(
	\scl=\{\scl_\alpha\}_{\alpha\geq0}
	\)
	of gauge functions and developed the associated Hausdorff, packing, box,
	local, and quantization scales, with applications to infinite-dimensional
	objects such as function spaces, the Wiener measure, and ergodic
	decompositions. Building on this framework, Fan and Helfter \cite{FH25}
	established generalized Frostman and mass distribution principles and a
	scale-based multifractal formalism for infinite-dimensional metric spaces.

	Although these works are primarily geometric and probabilistic, they provide
	the conceptual foundation for the present paper: Hausdorff-type
	constructions, local quantities, and Frostman-type arguments remain
	meaningful beyond classical power gauges. We transfer this viewpoint to
	orbit complexity by replacing metric balls with Bowen balls and introducing
	the corresponding Carath\'eodory-type entropy constructions.

	This framework should be distinguished from scaled entropy
	\cite{ZP15,ZP16}. Scaled entropy fixes a prescribed sequence \(a(n)\) and
	measures orbit complexity relative to a single normalization, whereas an
	entropy scale is generated by an entire family
	\(
	\{\scl_\alpha\}_{\alpha\geq0}
	\)
	and is defined through its critical parameter. The relation with Galatolo's generalized topological entropy is particularly
	direct at the capacity level. Suppose that $\scl_\alpha(e^{-n})
	=e^{-\alpha f(n)}$
	is induced by an admissible scaling family, where
	$f:\mathbb N\to(0,+\infty)$ is non-decreasing and
	$f(n)\to+\infty$. Then the associated upper capacity entropy scale coincides
	with the generalized topological entropy $h^f(T)$ introduced in
	\cite{Gal03}; see Remark~\ref{rem 2.9}. The present framework
	also provides a variable-length Carath\'{e}odory construction on arbitrary
	subsets and treats different normalizations simultaneously through the
	critical parameter of a scaling family. Motivated by Helfter's
	theory, we adapt such scaling families to topological dynamical systems and
	establish a variational principle extending
	\eqref{equ 1.2}; see Sections~\ref{sec 2} and~\ref{sec 3}.

	Our first result answers Question~\ref{ques 1}. We establish a general lower
	bound and obtain the reverse inequality under a compatibility condition on
	neighboring scaling levels.

	\begin{condition}[Controlled decay condition]\label{cond 1}
		We say that a scaling family
		$\scl=\{\scl_\alpha\}_{\alpha\geq0}$ satisfies the controlled decay
		condition if, for every $\alpha\geq0$ and every sufficiently small
		$\delta>0$, there exist a function
		$f_\delta:\mathbb N\to\mathbb R_+$, a strictly increasing sequence of
		positive integers $\{n_k\}_{k=1}^{\infty}$, and a constant
		$C_{\alpha,\delta}\geq1$ such that
		\begin{enumerate}
			\item
			$\lim_{\delta\to0}
			\limsup_{k\to\infty}
			\frac{
				f_\delta(n_k)
				\scl_{\alpha+\delta}(e^{-n_k})
			}
			{
				\scl_\alpha(e^{-n_k})
			}
			<\infty .$

			\item
			$
			\sum_{k=1}^{\infty}\frac1{f_\delta(n_k)}
			<\infty .$

			\item
			$\frac{
				\scl_\alpha(e^{-n_k})
			}
			{
				\scl_\alpha(e^{-n_{k+1}})
			}
			\leq C_{\alpha,\delta}.$
		\end{enumerate}
	\end{condition}
	Both the power and logarithmic scaling families satisfy this condition.

	Consequently, our first main theorem contains the variational principles of
	Feng and Huang \cite{FH12} and Kong and Chen \cite{KC14} as special cases.
	\begin{theorem}\label{thm 1.1}
		Let $(X,T)$ be a TDS and let
		$\scl=\{\scl_\alpha\}_{\alpha\geq0}$ be a scaling.
		For every non-empty compact subset $K\subset X$, one has
		\begin{align*}
			h_{\scl}^B(T,K)
			\geq
			\sup
			\left\{
			\underline h_{\mu,\scl}(T):
			\mu\in\mathscr M(X),\mu(K)=1
			\right\}.
		\end{align*}
		Moreover, if $\scl$ satisfies   Condition~\ref{cond 1}, then
		\begin{align*}
			h_{\scl}^B(T,K)
			=
			\sup
			\left\{
			\underline h_{\mu,\scl}(T):
			\mu\in\mathscr M(X),\mu(K)=1
			\right\}.
		\end{align*}
		Here $h_{\scl}^B(T,K)$ and
		$\underline h_{\mu,\scl}(T)$ denote the Bowen entropy scale and the
		measure-theoretic entropy scale, respectively.
	\end{theorem}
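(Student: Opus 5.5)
The plan follows Feng and Huang \cite{FH12}: a Billingsley-type lemma for the lower bound, and a Frostman-type lemma, built from a weighted Bowen entropy scale, for the reverse inequality. Throughout, $h^B_{\scl}(T,K)$ is the critical parameter of the Carath\'eodory construction
\[
\mathcal M^\alpha_{N,\epsilon}(K)=\inf\Big\{\sum_i \scl_\alpha(e^{-n_i}) : K\subset\bigcup_i B_{n_i}(x_i,\epsilon),\ n_i\ge N\Big\},
\]
and $\underline h_{\mu,\scl}(T)=\int \underline h_{\mu,\scl}(T,x)\,d\mu$. Here $\underline h_{\mu,\scl}(T,x)=\lim_{\epsilon\to0}\sup\{\alpha:\liminf_n \mu(B_n(x,\epsilon))/\scl_\alpha(e^{-n})\le \dots\}$ is the scale-analogue of the lower Brin--Katok local entropy, with the defining threshold that $\mu(B_n(x,\epsilon))\le \scl_\alpha(e^{-n})$ for all large $n$.

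\textbf{Lower bound (no extra condition).} Fix $\mu$ with $\mu(K)=1$ and $s<\underline h_{\mu,\scl}(T)$.
\begin{enumerate}
\item Find $\alpha>s$, $\epsilon>0$, $N$ and a Borel set $A\subset K$ with $\mu(A)>0$ such that $\mu(B_n(x,\epsilon))\le \scl_\alpha(e^{-n})\le\scl_s(e^{-n})$ for all $x\in A$ and $n\ge N$. This uses the monotonicity of $\scl_\alpha$ in $\alpha$ and a standard exhaustion over rational parameters.
\item Take any cover of $A$ by balls $B_{n_i}(y_i,\epsilon/2)$ that meet $A$. Each such ball lies in some $B_{n_i}(x_i,\epsilon)$ with $x_i\in A$. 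Summing gives
\[
\sum_i\scl_s(e^{-n_i})\ \ge\ \sum_i \mu(B_{n_i}(x_i,\epsilon))\ \ge\ \mu(A)>0 .
\]
\item Hence $h^B_{\scl}(T,A,\epsilon/2)\ge s$, so $h^B_{\scl}(T,K)\ge s$ by monotonicity of the entropy scale in the set. This is the Billingsley half.
\end{enumerate}
Compactness of $K$ is not needed for this half.

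\textbf{Upper bound under Condition~\ref{cond 1}.} Assume $h^B_{\scl}(T,K)>s$ with $s\ge0$; we must produce $\mu$ with $\mu(K)=1$ and $\underline h_{\mu,\scl}(T)\ge s-o(1)$.
\begin{enumerate}
\item Introduce the weighted quantity $\mathcal W^\alpha_{N,\epsilon}(K)=\inf\{\sum c_i\scl_\alpha(e^{-n_i}) : \sum c_i\chi_{B_{n_i}(x_i,\epsilon)}\ge\chi_K\}$.
\item Prove the comparison $\mathcal M^{\alpha+\delta}_{N,6\epsilon}(K)\lesssim \mathcal W^\alpha_{N,\epsilon}(K)$, up to constants, via a Vitali-type $5r$ covering lemma for Bowen balls. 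In \cite{FH12} this step loses a factor $n^2$ or $e^{n\delta}$, absorbed by the exponential gap. The argument groups covering balls by length and, for each length $n$, applies the Vitali lemma to the level sets $\{\sum c_i\chi\ge t\}$.
\item Under Condition~\ref{cond 1}, group lengths into the blocks $[n_k,n_{k+1})$. Clause (3) compares $\scl_\alpha$ within a block up to $C_{\alpha,\delta}$. Use level sets with thresholds weighted by $1/f_\delta(n_k)$, which is summable by clause (2). Clause (1) then gives $f_\delta(n_k)\scl_{\alpha+\delta}(e^{-n_k})\lesssim\scl_\alpha(e^{-n_k})$, absorbing the weighting, and letting $\delta\to0$ recovers the critical value.
\item Frostman lemma: for compact $K$ with $\mathcal W^s_{N,\epsilon}(K)>0$, obtain $\mu$ with $\mu(K)=1$ and $\mu(B_n(x,\epsilon))\le C\,\scl_s(e^{-n})/\mathcal W^s_{N,\epsilon}(K)$ for all $x$ and all $n\ge N$. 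The proof uses Hahn--Banach on $C(K)$ applied to the sublinear functional $p(f)=\mathcal W^s_{N,\epsilon}(f)$, plus Riesz representation, exactly as in \cite{FH12}.
\item This estimate gives $\underline h_{\mu,\scl}(T,x)\ge s$ everywhere, hence $\underline h_{\mu,\scl}(T)\ge s$.
\end{enumerate}

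The main obstacle is the comparison between $\mathcal M$ and $\mathcal W$ for a general scaling, steps 2--3. Classically, polynomial loss in $n$ is swallowed by exponential decay; here the only available replacement is the $f_\delta(n_k)$ mechanism. The covering must be adapted to the subsequence $n_k$: truncate each Bowen ball $B_n$ with $n_k\le n<n_{k+1}$ down to $B_{n_k}$. Since $B_n\subset B_{n_k}$, the truncated balls still cover, and by clause (3) their cost grows only by the factor $C_{\alpha,\delta}$. After that, run the level-set Vitali argument on one block at a time, weighting block $k$ by $1/f_\delta(n_k)$.
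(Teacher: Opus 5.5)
Your proposal is correct and follows essentially the same route as the paper. The lower bound is the paper's Billingsley-type argument (Theorem~\ref{thm 3.2}(2), applied to a positive-measure set on which the pointwise local scale is large), and the upper bound is exactly Proposition~\ref{prop 3.7} (truncating $B_{n_i}$ to $B_{n_k}$ on blocks $[n_k,n_{k+1})$, level sets with thresholds $At/f_\delta(n_k)$, a $5r$-covering count, clause (3) for the cost comparison) followed by the Hahn--Banach/Riesz Frostman construction of Lemma~\ref{lem 3.4}. The only slip is cosmetic: your displayed formula for $\underline h_{\mu,\scl}(T,x)$ should use $\limsup_n \mu(B_n(x,\epsilon))/\scl_\alpha(e^{-n})$ rather than $\liminf$, but the threshold you actually use ($\mu(B_n(x,\epsilon))\le\scl_\alpha(e^{-n})$ for all large $n$) matches the paper's definition.
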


	At the measure-theoretic level, Brin and Katok \cite{BK83} described
	entropy through the decay of the measures of Bowen balls. Their local
	entropy formula plays an important role in entropy theory on subsets and
	motivates the local entropy scales used in our variational principle.
	Related local quantities with a prescribed normalization were considered
	by Galatolo \cite{Gal07}.

	We next study entropy scales under factor maps. Let
	$\pi:(X,T)\to(Y,S)$ be a factor map, and define
	$a_\pi
	:=
	\sup_{y\in Y}h_{\mathrm{top}}^{U}
	\bigl(T,\pi^{-1}(y)\bigr),$
	where $h_{\mathrm{top}}^{U}$ denotes the classical upper capacity entropy
	of a subset. Bowen \cite[Theorem~17]{Bow71} proved that
	\begin{align*}
		h_{\mathrm{top}}(T)
		\leq
		h_{\mathrm{top}}(S)+a_\pi,
	\end{align*}
	and Fang, Huang, Yi and Zhang \cite[Theorem~3.3]{FHYZ12} obtained the
	corresponding inequality for Bowen entropy on arbitrary subsets.
	For a general scaling family, the exponential growth contributed by the
	fibers need not correspond to addition of scaling parameters. To express the
	appropriate comparison, for $u,a\in[0,+\infty)$ set
	\begin{align}\label{equ 1.3}
		\Gamma_{\scl}(u,a)
		:=
		\inf\left\{\gamma>0: \exists \alpha>u, \tau>0
		\text{ such that} \limsup_{n\to\infty}
		e^{(a+\tau)n}
		\frac{\scl_\gamma(e^{-n})}
		{\scl_\alpha(e^{-n})}
		<+\infty \right\}
	\end{align}
	We use the convention that the infimum of the empty set is $+\infty$, and
	set $\Gamma_{\scl}(u,a)=+\infty$ if $u=+\infty$ or $a=+\infty$.

	Our second main result gives the factor inequalities for both upper capacity
	and Bowen entropy scales.
	\begin{theorem}
		\label{thm 1.2}
		Let $\pi:(X,T)\to(Y,S)$ be a factor map. Then the following statements
		hold.
		\begin{enumerate}
			\item[\rm(1)] For every non-empty compact set $K\subset X$,
			\begin{align*}
				\overline h_{\scl}^{C}\bigl(S,\pi(K)\bigr)
				\leq
				\overline h_{\scl}^{C}(T,K)
				\leq
				\Gamma_{\scl}\left(
				\overline h_{\scl}^{C}\bigl(S,\pi(K)\bigr),a_\pi
				\right).
			\end{align*}
			\item[\rm(2)] For every non-empty subset $E\subset X$,
			\begin{align*}
				h_{\scl}^{B}\bigl(S,\pi(E)\bigr)
				\leq
				h_{\scl}^{B}(T,E)
				\leq
				\Gamma_{\scl}\left(
				h_{\scl}^{B}\bigl(S,\pi(E)\bigr),a_\pi
				\right).
			\end{align*}
		\end{enumerate}
	\end{theorem}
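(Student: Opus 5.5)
The plan follows Bowen's classical fibre argument \cite[Theorem~17]{Bow71} and its subset version \cite[Theorem~3.3]{FHYZ12}. The classical statement, which adds exponents, is replaced by a comparison of scaling levels encoded in $\Gamma_{\scl}$.

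\textbf{Lower bounds.} Since $\pi$ is uniformly continuous, for every $\epsilon>0$ there is $\delta>0$ with $\pi(B_n^T(x,\delta))\subset B_n^S(\pi x,\epsilon)$ for all $n$. For the upper capacity scale, any $(n,\epsilon)$-separated set of $\pi(K)$ lifts to an $(n,\delta)$-separated set of $K$ of the same cardinality. Comparing the critical parameters then gives $\overline h_{\scl}^{C}(S,\pi(K))\le\overline h_{\scl}^{C}(T,K)$. For the Bowen scale, any cover of $E$ by Bowen balls $B_{n_i}^T(x_i,\delta)$ maps to a cover of $\pi(E)$ by $B_{n_i}^S(\pi x_i,\epsilon)$ with the same lengths $n_i$. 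The Carath\'eodory sums $\sum_i\scl_\alpha(e^{-n_i})$ are therefore identical, so the Bowen measure of $\pi(E)$ at scale $\epsilon$ is at most that of $E$ at scale $\delta$, and $h^B_{\scl}(S,\pi(E))\le h^B_{\scl}(T,E)$ follows.

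\textbf{Key fibre lemma for the upper bounds.} Fix $\tau>0$ and $\epsilon>0$. By compactness of $Y$ and the definition of $a_\pi$, I would prove the following, as in Bowen's proof. There exist $\eta>0$ and $m\in\mathbb N$ such that every $S$-Bowen ball $B_N^S(y,\eta)$ has preimage $\pi^{-1}(B_N^S(y,\eta))$ covered by at most $C_{\tau,\epsilon}e^{(a_\pi+\tau)N}$ balls $B_N^T(x,2\epsilon)$, uniformly in $y$ and $N$. The proof first uses, for each $y$, an $m_y$ with $r_{m_y}(\pi^{-1}(y),\epsilon)<e^{(a_\pi+\tau/2)m_y}$ and an open neighbourhood $U_y\supset\pi^{-1}(y)$ covered by the corresponding $m_y$-balls. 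It then takes a finite subcover and a Lebesgue number $\eta$, and splits the orbit segment of length $N$ into blocks of lengths $m_{y_j}$ along which $S^t y$ stays in the relevant $U$'s, concatenating the block covers. Bounded block lengths absorb the leftover part into the constant $C_{\tau,\epsilon}$. I expect this step to be the main technical obstacle, though it is essentially classical. The one point to check carefully is uniformity in $N$.

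\textbf{Upper bounds.} Let $u$ denote the relevant entropy scale of the factor, and take $\gamma>\Gamma_{\scl}(u,a_\pi)$ (there is nothing to prove if $\Gamma_{\scl}=+\infty$). Choose $\alpha>u$ and $\tau>0$ such that $e^{(a_\pi+\tau)n}\scl_\gamma(e^{-n})\le M\scl_\alpha(e^{-n})$ for all large $n$; if $\gamma$ is not itself admissible, use admissible $\gamma'\le\gamma$ and monotonicity of $\scl$ in the parameter. For (2), the definition gives $\mathcal M^{\alpha}(\pi(E))=0$ at scale $\eta$. So there are covers $\{B^S_{N_i}(y_i,\eta)\}$ of $\pi(E)$ with $\min N_i$ large and $\sum_i\scl_\alpha(e^{-N_i})$ arbitrarily small. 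Lifting each ball by the fibre lemma yields a cover of $E$ with
\[
\sum_i C e^{(a_\pi+\tau)N_i}\scl_\gamma(e^{-N_i})\le CM\sum_i\scl_\alpha(e^{-N_i}),
\]
which is arbitrarily small. Hence $h^B_{\scl}(T,E)\le\gamma$; letting $\gamma\downarrow\Gamma_{\scl}(u,a_\pi)$ and then $\epsilon\to0$ gives (2). For (1) the argument is the same with fixed-length covers. Compactness of $K$ ensures that spanning numbers of $\pi(K)$ control those of $K$, and $r_N(K,2\epsilon)\le C e^{(a_\pi+\tau)N}r_N(\pi(K),\eta)$, combined with the growth comparison in $\Gamma_{\scl}$, yields the upper capacity inequality.
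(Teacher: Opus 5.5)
Your proposal is correct and follows essentially the same route as the paper: lower bounds from uniform continuity of $\pi$, a Bowen-type fibre covering lemma (the paper's Lemma~\ref{lem 4.2}, stated there with radius $4\epsilon$ and constant $e^{(a_\pi+\tau)M}$ where $M=\max_j m(y_j)$), and then lifting spanning sets, respectively variable-length Bowen covers, of the factor against an admissible parameter in the definition of $\Gamma_{\scl}$. The paper takes $\gamma$ itself admissible, which avoids your appeal to monotonicity in the parameter; that monotonicity holds only for large $n$, via Fact~\ref{fact 2.2}, but this suffices. One step your sketch blurs: the Lebesgue number $\eta$ must be taken in $Y$ for neighbourhoods $W_y$ of $y$ with $\pi^{-1}(W_y)\subset U_y$ (these exist because $\pi$ is closed), and the blocks must be chosen along the orbit of the centre $y$; it is not taken for the sets $U_y\subset X$ themselves.
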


	For the power scaling, $\Gamma_{\scl}(u,a)=u+a$, and
	Theorem~\ref{thm 1.2} recovers the classical factor inequalities
	\cite{Bow71,FHYZ12}. A counterexample shows that the analogous additive
	bound need not hold for a general scaling family.

	A further question concerns the behavior of entropy scales under the induced
	action on the space of probability measures. Given a TDS $(X,T)$, the
	push-forward map
	\begin{align*}
		T_*:\mathscr M(X)&\longrightarrow\mathscr M(X),\\
		T_*\mu&=\mu\circ T^{-1},
	\end{align*}
	defines a continuous dynamical system on $\mathscr M(X)$ endowed with the
	weak-$*$ topology. The systematic study of the relation between $(X,T)$ and
	$(\mathscr M(X),T_*)$ goes back to Bauer and Sigmund \cite{BS75}. They
	showed that positive topological entropy of \(T\) implies infinite
	topological entropy of \(T_*\), while Glasner and Weiss \cite{GW95} proved
	that zero topological entropy is preserved by the induced action. Together
	with the equivariant Dirac embedding, these results give
	\begin{align}
		\label{equ 1.4}
		h_{top}(T)>0
		\quad\Longleftrightarrow\quad
		h_{top}(T_*)>0
		\quad\Longleftrightarrow\quad
		h_{top}(T_*)=+\infty.
	\end{align}

	A fixed-sequence analogue was obtained by Qiao and Zhou \cite{QZ17}, who
	proved that zero topological sequence entropy along any prescribed
	increasing sequence is preserved by the induced action and consequently
	established the equality of the corresponding upper entropy dimensions.
	Related entropy dichotomies were obtained by Ji and Wang \cite{JW21} for
	free semigroup actions. From the viewpoint of mean dimension, Burguet and
	Shi \cite{BS25} proved that positive topological entropy of \(T\) is
	equivalent to positive, and hence infinite, mean dimension of \(T_*\);
	Shi and Zhang \cite{SZ25} extended this conclusion to actions of countably
	infinite discrete amenable groups.
	These results demonstrate that the induced measure action can substantially
	amplify dynamical complexity. However, neither classical entropy nor mean
	dimension directly determines the entropy scale of \(T_*\): the former
	detects exponential orbit growth, while the latter concerns the divergence
	of complexity as the spatial resolution tends to zero. An entropy scale
	instead records the critical temporal growth regime determined by the entire
	scaling family, and this critical value may be finite and positive. Thus the
	appropriate analogue of the Glasner--Weiss theorem is the preservation of
	the zero level, or equivalently of positivity, rather than a general
	positive-to-infinite dichotomy.

	To formulate our result, write
	$a_\alpha(n)
	:=
	-\log \scl_\alpha(e^{-n})$
	for the logarithmic time scale associated with $\scl_\alpha$. A direct comparison between the orbit complexity of $X$ and that of
	$\mathscr M(X)$ produces an additional factor of order $\log(n+1)$.
	Thus, for a given $\alpha>0$, this additional factor can be absorbed
	by a lower parameter $\beta\in(0,\alpha)$ whenever
	\begin{align}
		\label{equ 1.5}
		\lim_{n\to\infty}
		\frac{a_\beta(n)\log(n+1)}
		{a_\alpha(n)}
		=0.
	\end{align}
	This comparison does not apply to the classical scaling
	$a_\alpha(n)=\alpha n$. To include both this case and scaling
	families satisfying \eqref{equ 1.5}, define
	\begin{align*}
		\mathcal B_{\log}
		:=
		\left\{
		\alpha>0:
		\text{there is no }\beta\in(0,\alpha)
		\text{ satisfying \eqref{equ 1.5}}
		\right\}.
	\end{align*}

	\begin{condition}
		\label{cond 2}
		Assume that either $\mathcal B_{\log}=\emptyset$, or
		$\mathcal B_{\log}\neq\emptyset$ and
		\begin{align*}
			\inf_{\gamma>0}
			\limsup_{n\to\infty}
			\frac{a_\gamma(n)}{n}
			=0,
			\qquad
			\liminf_{n\to\infty}
			\frac{a_\alpha(n)}{n}>0
			\quad
			\text{for every }\alpha\in\mathcal B_{\log}.
		\end{align*}
	\end{condition}

	For parameters outside $\mathcal B_{\log}$, the covering estimate
	can be applied through a lower parameter. For parameters in
	$\mathcal B_{\log}$,  Condition~\ref{cond 2} allows us to
	use the classical zero-entropy result of Glasner and Weiss.

	The following theorem gives sufficient conditions for the preservation
	of zero upper capacity entropy scales under the induced measure action.

	\begin{theorem}
		\label{thm 1.3}
		Let $(X,T)$ be a TDS, and let
		$(\mathscr M(X),T_*)$ be the induced system on the space of
		Borel probability measures. Suppose that the scaling family
		satisfies Condition~\ref{cond 2}. Then
		\begin{align*}
			\overline h_{\scl}^{C}(T,X)=0
			\quad\Longleftrightarrow\quad
			\overline h_{\scl}^{C}
			(T_*,\mathscr M(X))=0.
		\end{align*}
		Equivalently,
		\begin{align*}
			\overline h_{\scl}^{C}(T,X)>0
			\quad\Longleftrightarrow\quad
			\overline h_{\scl}^{C}
			(T_*,\mathscr M(X))>0.
		\end{align*}
	\end{theorem}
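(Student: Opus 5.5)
The easy direction comes first. The Dirac map $x\mapsto\delta_x$ is an equivariant embedding of $(X,T)$ into $(\mathscr M(X),T_*)$. We fix a metric $\rho$ on $\mathscr M(X)$ compatible with the weak-$*$ topology, for instance a Kantorovich--Rubinstein metric, so that $\rho(\delta_x,\delta_y)=d(x,y)$. Then Bowen balls in $X$ correspond isometrically to Bowen balls of Dirac measures. Hence, by monotonicity of the upper capacity entropy scale under passing to a subset and under isometric embedding, $\overline h_{\scl}^{C}(T,X)\le \overline h_{\scl}^{C}(T_*,\mathscr M(X))$. Positivity for $T$ therefore forces positivity for $T_*$. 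It remains to show that $\overline h_{\scl}^{C}(T,X)=0$ implies $\overline h_{\scl}^{C}(T_*,\mathscr M(X))=0$, i.e.\ that $\overline h_{\scl}^{C}(T_*,\mathscr M(X))\le\alpha$ for every $\alpha>0$. We fix $\alpha>0$ and split into two cases according to whether $\alpha\in\mathcal B_{\log}$.

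\emph{Case $\alpha\notin\mathcal B_{\log}$.} Choose $\beta\in(0,\alpha)$ satisfying \eqref{equ 1.5}. Since $\overline h_{\scl}^{C}(T,X)=0<\beta$, for each $\epsilon>0$ the minimal number $N(n,\epsilon)$ of $(n,\epsilon)$-Bowen balls covering $X$ satisfies $\log N(n,\epsilon)\le a_\beta(n)$ for large $n$, after unwinding the definition of the capacity scale through $\scl_\beta(e^{-n})=e^{-a_\beta(n)}$. Next comes the standard covering estimate in the spirit of Glasner--Weiss and Qiao--Zhou. Let $\{B_1,\dots,B_N\}$ be a disjointified $(n,\epsilon)$-cover of $X$. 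Approximate each measure $\mu$ by its vector of weights $(\mu(B_i))_i$, quantized to multiples of $1/m$ with $m$ depending only on $\epsilon$. Two measures with the same quantized vector are $C\epsilon$-close in $\rho$ along the first $n$ iterates, because $T^j_*\mu$ is close to $\sum_i\mu(B_i)\delta_{T^jx_i}$ for $j<n$. The number of quantized vectors is at most $\binom{N+m}{m}\le (N+1)^m$. Thus
\[
\log N_{T_*}(n,C\epsilon)\le m(\epsilon)\log(N(n,\epsilon)+1)\lesssim m(\epsilon)\,a_\beta(n).
\]
Here I expect the $\log(n+1)$ factor of the paper to appear in a refined version that uses a cover where each measure is approximated with $n$-dependent precision. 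I would carry out whichever version the paper's covering lemma gives. In the worst case the bound is $\log N_{T_*}\le C(\epsilon)\,a_\beta(n)\log(n+1)$. By \eqref{equ 1.5} this is $o(a_\alpha(n))$. Hence $N_{T_*}(n,C\epsilon)\,\scl_\alpha(e^{-n})\to0$, which gives $\overline h_{\scl}^{C}(T_*,\mathscr M(X))\le\alpha$.

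\emph{Case $\alpha\in\mathcal B_{\log}$.} By Condition~\ref{cond 2}, $\liminf a_\alpha(n)/n=c>0$ and $\inf_\gamma\limsup a_\gamma(n)/n=0$. Choose $\gamma$ with $\limsup a_\gamma(n)/n$ small. Then $\overline h_{\scl}^{C}(T,X)=0<\gamma$ forces $\log N(n,\epsilon)\le a_\gamma(n)\le \eta n$ for every $\eta>0$ eventually, so $h_{top}(T)=0$. By Glasner--Weiss, $h_{top}(T_*)=0$, so $\log N_{T_*}(n,\epsilon)=o(n)=o(a_\alpha(n))$. This again yields $\overline h_{\scl}^{C}(T_*,\mathscr M(X))\le\alpha$.

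Since every $\alpha>0$ falls into one of the two cases, the induced entropy scale vanishes. The main obstacle is the first case: I must obtain a covering bound for $\mathscr M(X)$ whose dependence on $n$ beyond $\log N(n,\epsilon)$ is at most a $\log(n+1)$ factor, uniformly in $n$ for fixed $\epsilon$. I would first try the quantized-weights argument above, which gives a constant factor, and check carefully that the constant $m$ can indeed be chosen independent of $n$ for the weak-$*$ metric. If the Bowen-ball diameters force $n$-dependent precision, I would fall back on precision of order $1/n$, which produces exactly the $\log(n+1)$ factor absorbed by \eqref{equ 1.5}.
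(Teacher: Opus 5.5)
Your easy direction (Dirac embedding; the Kantorovich--Rubinstein metric is a legitimate choice because the capacity scale does not depend on the compatible metric) and your case $\alpha\in\mathcal B_{\log}$ are exactly Step~2 and Case~2 of the paper. The gap is in the case $\alpha\notin\mathcal B_{\log}$. The covering estimate for $\mathscr M(X)$ is the entire content of that case, and neither mechanism you propose establishes it.

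Your justification that measures with the same quantized weight vector are close only reduces $D_n$-closeness to $\ell^1$-closeness of the weight vectors $(\mu(B_i))_{i\le N}$. It does this via $\rho\bigl(\sum_ip_i\delta_{T^jx_i},\sum_ip'_i\delta_{T^jx_i}\bigr)\le\operatorname{diam}(X)\,\|p-p'\|_1$. But a $1/m$-lattice in the simplex $\Delta_N$ is not an $\ell^1$-net when $N\gg m$: the uniform vector is at $\ell^1$-distance at least $2(1-m/N)$ from every lattice point. A genuine $\ell^1$-net of $\Delta_N$ at a small scale has size exponential in $N=r_n(X,\epsilon)$.

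Concretely, take the full shift with the cylinder cover, so $N=2^{n+q_\epsilon-1}$. The normalized restrictions of the Bernoulli measure to $[0]$ and $[1]$ have all cell weights below $1/m$ once $n$ is large, so any coordinatewise rounding to mesh $1/m$ assigns them the same vector. Yet they are a fixed positive distance apart already at time $0$.

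Note also that an $n$-independent $m$ would give $\log r_n(\mathscr M(X),\epsilon)\le C_\epsilon\log r_n(X,\delta_\epsilon)$. That would yield the Glasner--Weiss theorem for free, which is a clear warning sign. Your fallback does not help either. Mesh $1/n$ produces $\binom{N+n}{n}$ vectors, whose logarithm is of order $n\log(N+1)$ when $N\ge n$. That is a factor $n$, not $\log(n+1)$, and the $\ell^1$ error is still uncontrolled when $N\gg n$.

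The missing idea is to exploit that $D_n$ only sees finitely many test functions along the orbit. Choose $f_1,\dots,f_q$ controlling $D$ up to $\epsilon/4$. Then approximate $\mu$ simultaneously on the $qn$ functions $f_k\circ T^j$, $1\le k\le q$, $0\le j<n$. By Hoeffding's inequality and a union bound over these $qn$ functions, the empirical measure of $m=\lceil 2\eta^{-2}\log(4qn)\rceil$ independent $\mu$-distributed points achieves accuracy $\eta$ with positive probability. Moving each point to a nearby element of a minimal $(n,\delta_\epsilon)$-spanning set of $X$ costs at most another $\eta$, by uniform continuity of the $f_k$.

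This gives Lemma~\ref{lem 5.3}, $r_n(\mathscr M(X),D,\epsilon)\le r_n(X,d,\delta_\epsilon)^{C_\epsilon\log(n+1)}$. The $\log(n+1)$ comes from the number of atoms forced by the union bound, not from a precision of order $1/n$. With this lemma in hand, the rest of your first case goes through exactly as you wrote it, using \eqref{equ 1.5}.
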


	For the classical scaling $\scl_\alpha(t)=t^\alpha$,
	Theorem~\ref{thm 1.3} recovers the preservation of zero topological
	entropy proved by Glasner and Weiss. The theorem also applies to
	entropy-dimension-type scalings. A counterexample shows that
	Condition~\ref{cond 2} cannot in general be omitted.
	Theorem~\ref{thm 1.3} concerns preservation of the zero level.
	We next give a sufficient condition under which positive entropy
	scale of the original system implies infinite entropy scale of the
	induced system.

	\begin{condition}[Power comparison condition]
		\label{cond 3}
		For every $0<\alpha<\gamma$, there exists an integer $m\geq2$
		such that $\liminf_{t\to0^+}
		\frac{\scl_\gamma(t)}
		{\scl_\alpha(t)^m}>0.$
	\end{condition}
	It means that, for sufficiently
	small $t$, the function $\scl_\gamma(t)$ is bounded below by a
	positive constant times a finite power of $\scl_\alpha(t)$.

	\begin{theorem}
		\label{thm 1.4}
		Let $(X,T)$ be a TDS. Suppose that the scaling family satisfies
		Condition~\ref{cond 3}. Then
		\begin{align*}
			\overline h_{\scl}^{C}(T,X)>0
			\quad\Longrightarrow\quad
			\overline h_{\scl}^{C}
			\bigl(T_*,\mathscr M(X)\bigr)=+\infty.
		\end{align*}
		If, in addition, the scaling family satisfies
		Condition~\ref{cond 2}, then
		\begin{align*}
			\overline h_{\scl}^{C}
			\bigl(T_*,\mathscr M(X)\bigr)
			=
			\begin{cases}
				0,
				&
				\text{if }
				\overline h_{\scl}^{C}(T,X)=0,
				\\[2mm]
				+\infty,
				&
				\text{if }
				\overline h_{\scl}^{C}(T,X)>0.
			\end{cases}
		\end{align*}
	\end{theorem}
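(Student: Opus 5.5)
The approach follows Bauer and Sigmund's construction: many well-separated orbit segments in $X$ give, after taking averages of Dirac measures, combinatorially many more separated orbit segments in $\mathscr M(X)$. Condition~\ref{cond 3} then converts this polynomial-power gain into an arbitrary increase of the scaling parameter. The second assertion is then a direct combination with Theorem~\ref{thm 1.3}. If $\overline h^C_{\scl}(T,X)=0$, Theorem~\ref{thm 1.3}, which applies since Condition~\ref{cond 2} holds, gives zero for $T_*$. If $\overline h^C_{\scl}(T,X)>0$, the first assertion gives $+\infty$.

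For the first assertion I use the separated-set description of the upper capacity entropy scale. Suppose $\overline h^C_{\scl}(T,X)>\alpha>0$. Then there is $\epsilon>0$ such that, along infinitely many $n$, there is an $(n,\epsilon)$-separated set $E_n\subset X$ with $\#E_n\cdot \scl_\alpha(e^{-n})\to\infty$, and in particular $\#E_n\geq \scl_\alpha(e^{-n})^{-1}$. Equip $\mathscr M(X)$ with a metric $D$ compatible with the weak-$*$ topology, for instance a Kantorovich-type metric built from Lipschitz functions. For a fixed integer $m\geq 2$ (to be chosen from Condition~\ref{cond 3}) and a large integer $L$, consider the measures
\[
\mu_A=\frac1{m}\sum_{x\in A}\delta_x,\qquad A\subset E_n,\ \#A=m .
\]
There are $\binom{\#E_n}{m}\gtrsim c_m(\#E_n)^m$ such subsets. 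I claim they are $(n,\eta)$-separated for $T_*$ for some $\eta=\eta(\epsilon,m)>0$. If $A\neq A'$, pick $x\in A\setminus A'$. For each $y\in A'$ there is a time $j<n$ with $d(T^jx,T^jy)>\epsilon$. The difficulty is that a single time $j$ must separate $x$ from all of $A'$ simultaneously. I would handle this with the standard trick of taking products of separated sets, or by passing to a subfamily: by a pigeonhole/greedy argument one can choose $\gtrsim (\#E_n)^{m}/C(m,n)$ families with a common separating time, where the loss $C(m,n)$ is only polynomial in $n$. A simpler alternative is to work with the product system $(X^m,T^{\times m})$ and the map $(x_1,\dots,x_m)\mapsto \frac1m\sum\delta_{x_i}$. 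This map is Lipschitz and is injective up to permutation on configurations, and a quantitative injectivity estimate gives $D(\mu_A,\nu_{A'})\ge \eta$ whenever some coordinate is $\epsilon$-far from all coordinates of the other configuration.

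Granting a separated family of cardinality at least $c\,(\#E_n)^m/\mathrm{poly}(n)$, Condition~\ref{cond 3} finishes the argument. Given any $\gamma>\alpha$, choose $m$ with $\scl_\gamma(t)\geq c'\scl_\alpha(t)^m$. Then
\[
\#\mathcal F_n\,\scl_{\gamma'}(e^{-n})\gtrsim \frac{\scl_{\gamma'}(e^{-n})}{\mathrm{poly}(n)\,\scl_\alpha(e^{-n})^m}.
\]
To absorb the polynomial factor I apply the argument with $\alpha$ replaced by a slightly smaller $\alpha'$ and use $\#E_n\scl_\alpha(e^{-n})\to\infty$ along a subsequence. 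Alternatively, I would first observe that $\scl_\alpha(e^{-n})^{-1}$ grows faster than any polynomial. This follows from positivity of the scale and Condition~\ref{cond 3} applied to $\alpha/2<\alpha$, which shows $\scl_{\alpha}(t)\ge c\,\scl_{\alpha/2}(t)^m$ and hence that the $\scl$ values decay in a comparable way. Hence $\overline h^C_{\scl}(T_*,\mathscr M(X))\geq\gamma$, and since $\gamma$ is arbitrary the scale is $+\infty$.

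The main obstacle is the separation step: producing, from an $(n,\epsilon)$-separated set in $X$, about $(\#E_n)^m$ points of $\mathscr M(X)$ that are $(n,\eta)$-separated under $T_*$, with at most subexponential-in-$\scl$ losses. I would first try the product-system route, since separation in $X^m$ under $T^{\times m}$ is automatic for $m$-tuples of distinct points of $E_n$ after symmetrization. The remaining step is a uniform lower bound on $D(\tfrac1m\sum\delta_{x_i},\tfrac1m\sum\delta_{y_i})$ when some $x_i$ is $\epsilon$-far from every $y_j$. This bound does hold, with $\eta=\epsilon/m$ for the Kantorovich metric, by testing against the function $z\mapsto\max(0,\epsilon-d(z,x_i))$.
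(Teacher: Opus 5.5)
\textbf{There is a genuine gap in the first assertion.} Your reduction of the second assertion to Theorem~\ref{thm 1.3} is exactly what the paper does. The trouble is the separation step, which you yourself flag as the main obstacle.

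\textbf{Why equal weights fail.} With equal weights, $(x_1,\dots,x_m)\mapsto\frac1m\sum_i\delta_{x_i}$ is invariant under permutations. So what you actually need is separation in the symmetric product: a \emph{single} time $j$ at which the configurations $\{T^jx:x\in A\}$ and $\{T^jy:y\in A'\}$ are far apart for every matching. Separation of tuples in $(X^m,T^{\times m})$ gives only this: for each bijection $\pi:A\to A'$ there is some time, depending on $\pi$, at which some $x$ is $\epsilon$-far from $\pi(x)$. It does not give a time at which some $x\in A$ is $\epsilon$-far from \emph{all} of $A'$, which is the hypothesis of your Kantorovich estimate. This quantifier swap can genuinely fail, because pairs can exchange partners.

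Here is a concrete instance. Near the saddle fixed point of a hyperbolic toral automorphism, use eigen-coordinates in which the map is $(s,u)\mapsto(\lambda^{-1}s,\lambda u)$, with the sup-norm. Take
\[
x_1=(\epsilon,\eta),\quad x_2=(-\epsilon,-\eta),\quad y_1=(\epsilon,-\eta),\quad y_2=(-\epsilon,\eta),
\]
with $0<\eta\ll\epsilon$, and let $n$ be the first integer with $2\eta\lambda^{n-1}\ge\epsilon$.
\begin{itemize}
\item These four points are pairwise $(n,\epsilon)$-separated.
\item At each time $j<n$ the two configurations are $\{(a,b),(-a,-b)\}$ and $\{(a,-b),(-a,b)\}$, where $a=\epsilon\lambda^{-j}$ and $b=\eta\lambda^j$.
\item Hence the Kantorovich distance between $\tfrac12(\delta_{x_1}+\delta_{x_2})$ and $\tfrac12(\delta_{y_1}+\delta_{y_2})$ stays at most $2\min\{a,b\}\le2\sqrt{\epsilon\eta}$ along the whole orbit segment.
\end{itemize}
So distinct $m$-subsets of a separated set need not give separated measures. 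Your pigeonhole/greedy extraction ``with polynomial loss'' is asserted without argument.

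\textbf{The missing idea.} The paper breaks the permutation symmetry with unequal weights $w_i=2^{i-1}/(2^m-1)$. Evaluating $\sum_iw_i\delta_{x_i}$ at singletons and using uniqueness of binary expansions shows that $\Phi_m(x_1,\dots,x_m)=\sum_i w_i\delta_{x_i}$ is injective on all of $X^m$. By compactness it is therefore an equivariant topological embedding of $(X^m,T^{\times m})$ into $(\mathscr M(X),T_*)$ (Lemma~\ref{lem 5.7}). Uniform continuity of $\Phi_m^{-1}$ then transfers $(n,\epsilon)$-separation in $X^m$ to $(n,\delta)$-separation in $\mathscr M(X)$ time by time, with no loss in $n$ (Lemma~\ref{lem 5.6}). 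For the max metric, $s_n(X^m,d^{(m)},\epsilon)\ge s_n(X,\epsilon)^m$ is immediate. Condition~\ref{cond 3} then gives $s_n(X^m,d^{(m)},\epsilon)\scl_\gamma(e^{-n})\ge c\,\bigl(s_n(X,\epsilon)\scl_\alpha(e^{-n})\bigr)^m$ directly.

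\textbf{Your fallback for absorbing a polynomial loss is also wrong.}
\begin{itemize}
\item Condition~\ref{cond 3} applied to $\alpha/2<\alpha$ gives a \emph{lower} bound $\scl_\alpha\ge c\,\scl_{\alpha/2}^m$. It says nothing about superpolynomial decay.
\item $\scl_\alpha(e^{-n})^{-1}$ need not grow superpolynomially: the logarithmic scaling $n^{-\alpha}$ satisfies Condition~\ref{cond 3}.
\item Worse, take $\scl_\alpha(t)=(\log\log\frac1t)^{-\alpha}$ for small $t$. This is a scaling satisfying Condition~\ref{cond 3}. Positivity then only yields $s_n(X,\epsilon)\gtrsim(\log n)^{\alpha}$ along a subsequence. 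So any loss polynomial in $n$ destroys the estimate, and passing to a smaller $\alpha'$ gains only a factor $(\log n)^{\alpha-\alpha'}$.
\end{itemize}
The argument has to be loss-free in $n$, and the weighted Dirac embedding is precisely what makes it so.
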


	For the classical scaling, both conditions are satisfied, and
	Theorem~\ref{thm 1.4} recovers the classical relation between a system
	and its induced measure system. A counterexample shows that the first
	conclusion may fail without Condition~\ref{cond 3}.

	Replacing consecutive observation times by a prescribed increasing
	sequence and using the result of Qiao and Zhou \cite{QZ17} gives the
	following sequence version of Theorem~\ref{thm 1.3}.

	\begin{theorem}
		\label{thm 1.5}
		Let $(X,T)$ be a TDS, and let
		$\mathsf S=\{t_1<t_2<\cdots\}\subset\mathbb Z_+$
		be an increasing sequence. Suppose that the scaling family
		satisfies Condition~\ref{cond 2}. Then
		\begin{align*}
			\overline h_{\scl,\mathsf S}^{C}(T,X)=0
			\quad\Longleftrightarrow\quad
			\overline h_{\scl,\mathsf S}^{C}
			\bigl(T_*,\mathscr M(X)\bigr)=0.
		\end{align*}
		Equivalently,
		\begin{align*}
			\overline h_{\scl,\mathsf S}^{C}(T,X)>0
			\quad\Longleftrightarrow\quad
			\overline h_{\scl,\mathsf S}^{C}
			\bigl(T_*,\mathscr M(X)\bigr)>0.
		\end{align*}
		Here
		$\overline h_{\scl,\mathsf S}^{C}(T,X)$
		denotes the upper capacity sequence entropy scale of $T$
		along $\mathsf S$.
	\end{theorem}

	For the power scaling
	\(
	\scl_\alpha(\epsilon)=\epsilon^\alpha,
	\)
	Theorem~\ref{thm 1.5} reduces to
	\cite[Theorem~1(1)]{QZ17}. As a consequence, the generalized upper
	positive-sequence dimension introduced in
	Subsection~\ref{subsec 5.5} is also preserved by the
	induced measure action, recovering \cite[Theorem~1(2)]{QZ17} in the
	power-scale case.

	The paper is organized as follows. Section~\ref{sec 2} introduces the
	entropy scales used throughout the paper. Section~\ref{sec 3}
	proves the Billingsley-type theorem and the subset variational principle.
	Sections~\ref{sec 4} and~\ref{sec 5} treat factor maps and induced
	measure actions, respectively. Section~\ref{sec 6} presents symbolic and
	interval-dynamical examples.

	\section{Preliminaries}
	\label{sec 2}
	In this section, we introduce Bowen, capacity, and local entropy scales
	associated with a scaling family and record their basic properties.

	\subsection{Scalings}\label{subsec 2.1}

	We first recall the notion of scaling introduced by Helfter \cite{Hel25a}.
	A scaling provides a flexible family of gauge functions that allows one to
	measure complexity beyond classical power laws.

	\begin{definition}\label{def 2.1}
		A one-parameter family
		$\scl=(\scl_\alpha)_{\alpha\geq0}$
		of positive non-decreasing functions on $(0,1)$ is called a
		\emph{scaling} if for every $\alpha>\beta>0$ and every $\lambda>1$
		sufficiently close to $1$, the following relations hold:
		\begin{align*}
			\scl_\alpha(\epsilon)
			=
			o\left(\scl_\beta(\epsilon^\lambda)\right),
			\qquad
			\scl_\alpha(\epsilon)
			=
			o\left(\scl_\beta(\epsilon)^\lambda\right)
		\end{align*}
		as $\epsilon\to0^+$.
	\end{definition}

	The scaling parameter $\alpha$ plays the role of a dimension parameter,
	while the family $\{\scl_\alpha\}$ determines different asymptotic regimes
	of complexity. In this sense, increasing the parameter $\alpha$ corresponds
	to measuring complexity at finer scales.

	\emph{For the entropy quantities considered in Section~\ref{sec 4} and
		Section~\ref{sec 5}, we focus on the non-trivial dimensional regime where the
		scaling functions vanish at small scales, namely,
		\begin{align*}
			\lim_{\epsilon\to0}\scl_\alpha(\epsilon)=0
		\end{align*}
		for every $\alpha>0$.} This condition ensures that the associated gauge
	functions behave as genuine small-scale measurements and allows the
	normalization terms
	$-\log\scl_\alpha(e^{-n})$
	to diverge as $n\to\infty$.

	\begin{fact}\label{fact 2.2}
		Let $\scl$ be a scaling defined above. Then for any
		$\alpha>\beta>0$ and any constant $C>0$, there exists
		$\eta=\eta(\alpha,\beta,C)$ such that for every
		$\epsilon\in(0,\eta)$,
		\begin{align*}
			\scl_\alpha(\epsilon)
			\leq
			\scl_\beta(C\epsilon), \quad 	\scl_\alpha(\epsilon)
			\leq
			C\cdot\scl_\beta(\epsilon).
		\end{align*}
	\end{fact}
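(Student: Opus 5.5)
Both inequalities follow from the defining little-$o$ relations of Definition~\ref{def 2.1}, applied with a fixed pair $\alpha>\beta>0$ and a fixed admissible exponent $\lambda>1$ close to $1$. I treat the two inequalities separately and take $\eta$ to be the minimum of the two thresholds obtained.

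\emph{Second inequality.} The relation $\scl_\alpha(\epsilon)=o(\scl_\beta(\epsilon)^\lambda)$ gives some $\eta_1>0$ with $\scl_\alpha(\epsilon)\le \scl_\beta(\epsilon)^\lambda$ for all $\epsilon<\eta_1$. There are two cases.
\begin{itemize}
\item If $\scl_\beta(\epsilon)\to0$, shrink $\eta_1$ so that $\scl_\beta(\epsilon)^{\lambda-1}\le C$ on $(0,\eta_1)$. Then $\scl_\alpha(\epsilon)\le C\,\scl_\beta(\epsilon)$ there.
\item In general, the limit $L=\lim_{\epsilon\to0^+}\scl_\beta(\epsilon)\ge0$ exists by monotonicity. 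If $L>0$, I use the first relation instead: $\scl_\alpha(\epsilon)=o(\scl_\beta(\epsilon^\lambda))$ and $\scl_\beta(\epsilon^\lambda)\le \scl_\beta(\epsilon)$ (monotonicity, since $\epsilon^\lambda<\epsilon$) give $\scl_\alpha(\epsilon)=o(\scl_\beta(\epsilon))$. In fact this argument works in every case, so the case split is unnecessary: $\scl_\alpha/\scl_\beta\to0$, and this ratio is eventually below $C$.
\end{itemize}

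\emph{First inequality.} If $C\ge1$, monotonicity gives $\scl_\beta(C\epsilon)\ge\scl_\beta(\epsilon)$ for small $\epsilon$, so the claim follows from the ratio $\scl_\alpha(\epsilon)/\scl_\beta(\epsilon)\to0$ established above. The main obstacle is $C<1$. There I use $\epsilon^\lambda\le C\epsilon$, which holds exactly when $\epsilon^{\lambda-1}\le C$, i.e.\ for $\epsilon\le C^{1/(\lambda-1)}$. Monotonicity then gives $\scl_\beta(\epsilon^\lambda)\le \scl_\beta(C\epsilon)$. Combined with $\scl_\alpha(\epsilon)=o(\scl_\beta(\epsilon^\lambda))$, this yields $\scl_\alpha(\epsilon)\le \scl_\beta(\epsilon^\lambda)\le\scl_\beta(C\epsilon)$ for $\epsilon$ below some $\eta_2$.

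Finally, one technical point: the functions are defined only on $(0,1)$, so I also require $C\epsilon<1$, i.e.\ $\eta\le 1/C$. The proof is then complete with $\eta=\min(\eta_1,\eta_2,1/C,C^{1/(\lambda-1)})$.
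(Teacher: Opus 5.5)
Your proof is correct. The paper states Fact~\ref{fact 2.2} without proof, and your argument is the natural verification: for all small $\epsilon$ one has $\epsilon^\lambda\le\min\{\epsilon,C\epsilon\}$, and monotonicity of $\scl_\beta$ together with $\scl_\alpha(\epsilon)=o(\scl_\beta(\epsilon^\lambda))$ then gives both inequalities. You also correctly keep $C\epsilon$ inside $(0,1)$ by requiring $\eta\le 1/C$.

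Only the first relation of Definition~\ref{def 2.1} is needed, so your case split on $\lim_{\epsilon\to0}\scl_\beta(\epsilon)$ can be dropped. The split $C\ge1$ versus $C<1$ is also unnecessary, since $\epsilon^{\lambda-1}\le C$ already gives $\epsilon^\lambda\le C\epsilon$ for every $C>0$.
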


	These comparisons will be used repeatedly. The following family from
	\cite{Hel25a} contains the basic power and order scalings.

	\begin{example}\label{ex 2.3}
		For integers $p,q\geq1$, set
		\begin{align*}
			\scl_\alpha : \epsilon > 0 \mapsto \frac{1}{\exp^{\circ p}\bigl(\alpha \cdot \log_+^{\circ q}(\epsilon^{-1})\bigr)},
		\end{align*}
		where $\log_+=\chi_{(1,+\infty)}\log$ and $f^{\circ n}$ denotes the
		$n$-fold iterate. The choices $(p,q)=(1,1)$ and $(2,1)$ give
		$\scl_\alpha(\epsilon)=\epsilon^\alpha$ and
		$\scl_\alpha(\epsilon)=e^{-\epsilon^{-\alpha}}$, respectively.
	\end{example}

	\subsection{Bowen entropy scales}\label{subsec 2.2}
	Inspired by the Carath\'{e}odory construction of Bowen entropy
	\cite{Bow73,FH12}, we introduce the Bowen entropy scale associated with a
	general scaling family.
	Let $K\subseteq X$ be a non-empty subset  (not necessarily compact or $T$-invariant) and $\scl=\{\scl_\alpha\}_{\alpha\geq 0}$ be a scaling. For   $\epsilon>0$, $N\in \mathbb{N}$ and $\alpha\geq0$,
	define
	\begin{align*}
		M_{\scl_\alpha}(T,K,\epsilon,N)=\inf\sum_{i\in I}\limits   \scl_\alpha(e^{-n_i}),
	\end{align*}
	where the infimum  is taken over all  finite or countable covers $\{B_{n_i}(x_i,\epsilon)\}_{i\in I}$ of $K$ with $n_i \geq N,x_i \in X.$

	Since $M_{\scl_\alpha}(T,K,\epsilon,N)$ is non-decreasing with respect to $N$, the limit as $N\to\infty$ exists (possibly infinite). We denote this limit by
	\begin{align*}
		M_{\scl_\alpha}(T,K,\epsilon)=\lim_{N\to\infty}M_{\scl_\alpha}(T,K,\epsilon,N)
	\end{align*}
	Observe that the map $\epsilon\mapsto M_{\scl_\alpha}(T,K,\epsilon)$ is non-decreasing as $\epsilon\to 0$. Consequently, the limit as $\epsilon\to 0^+$ exists, and we set
	\begin{align*}
		M_{\scl_\alpha}(T,K):=\lim_{\epsilon\to 0}M_{\scl_\alpha}(T,K,\epsilon).
	\end{align*}

	\begin{proposition}
		\label{prop 2.4}
		For any $0<\beta<\alpha$, the following implications hold:
		\begin{align*}
			M_{\scl_\beta}(T,K)<+\infty
			&\quad\Longrightarrow\quad
			M_{\scl_\alpha}(T,K)=0,\\
			M_{\scl_\alpha}(T,K)>0
			&\quad\Longrightarrow\quad
			M_{\scl_\beta}(T,K)=+\infty.
		\end{align*}
		Consequently, there exists a critical value
		$\alpha_c\in[0,+\infty]$ such that
		\begin{align*}
			M_{\scl_\alpha}(T,K)
			=
			\begin{cases}
				+\infty, & 0<\alpha<\alpha_c,\\
				0,       & \alpha>\alpha_c.
			\end{cases}
		\end{align*}
	\end{proposition}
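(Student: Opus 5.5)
The plan is to compare the two gauge functions term by term, using the second estimate of Fact~\ref{fact 2.2}, and then pass to the limits defining $M_{\scl_\alpha}$. Fix $0<\beta<\alpha$ and an arbitrary constant $C>0$. By Fact~\ref{fact 2.2} there is $\eta=\eta(\alpha,\beta,C)$ such that
\begin{align*}
\scl_\alpha(\epsilon)\leq C\,\scl_\beta(\epsilon),\qquad \epsilon\in(0,\eta).
\end{align*}
Choose $N_0$ with $e^{-N_0}<\eta$. Then every Bowen ball $B_{n}(x,\epsilon)$ with $n\geq N\geq N_0$ satisfies $\scl_\alpha(e^{-n})\leq C\,\scl_\beta(e^{-n})$.

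\textbf{Step 1 (comparison of outer sums).} Take any countable cover $\{B_{n_i}(x_i,\epsilon)\}$ of $K$ with $n_i\geq N\geq N_0$. Summing the pointwise inequality over this cover and taking the infimum over all such covers gives
\begin{align*}
M_{\scl_\alpha}(T,K,\epsilon,N)\leq C\,M_{\scl_\beta}(T,K,\epsilon,N).
\end{align*}
Letting $N\to\infty$ and then $\epsilon\to0$ yields
\begin{align*}
M_{\scl_\alpha}(T,K)\leq C\,M_{\scl_\beta}(T,K).
\end{align*}
The key point is that $N_0$ depends only on $C$, $\alpha$ and $\beta$, not on $\epsilon$. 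So the inequality survives both limits, and it holds for every $C>0$.

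\textbf{Step 2 (the two implications).} If $M_{\scl_\beta}(T,K)<+\infty$, let $C\to0$ in Step 1 to get $M_{\scl_\alpha}(T,K)=0$. The second implication is the contrapositive of the first.

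\textbf{Step 3 (critical value).} Set
\begin{align*}
\alpha_c:=\inf\{\alpha>0: M_{\scl_\alpha}(T,K)=0\},
\end{align*}
with $\inf\emptyset=+\infty$. If $\alpha>\alpha_c$, pick $\alpha'\in[\alpha_c,\alpha)$ with $M_{\scl_{\alpha'}}(T,K)=0<\infty$; the first implication then gives $M_{\scl_\alpha}(T,K)=0$. If $0<\alpha<\alpha_c$, then $M_{\scl_{\alpha'}}(T,K)>0$ for every $\alpha'\in(\alpha,\alpha_c)$ by the definition of the infimum. The second implication then gives $M_{\scl_\alpha}(T,K)=+\infty$.

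The only delicate point is Step 1: one must check that the threshold $N_0$ is uniform over all covers and all $\epsilon$. This holds because the comparison depends only on the time index $n_i$ through $e^{-n_i}$, and never on $\epsilon$ or on the centres $x_i$.
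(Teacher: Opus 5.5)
Your proposal is correct and follows essentially the same route as the paper: both use the second estimate of Fact~\ref{fact 2.2} to get $M_{\scl_\alpha}(T,K)\leq C\,M_{\scl_\beta}(T,K)$ for every $C>0$, derive the two implications from it, and define $\alpha_c$ as the infimum of the parameters with $M_{\scl_\alpha}(T,K)=0$. The only cosmetic difference is that you get the second implication as the contrapositive of the first, whereas the paper sends $C\to0^+$ in $M_{\scl_\beta}(T,K)\geq C^{-1}M_{\scl_\alpha}(T,K)$.
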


	\begin{proof}
		Fix $0<\beta<\alpha$. By Fact~\ref{fact 2.2}, for every
		$C>0$ and all sufficiently large $n$,
		\begin{align*}
			\scl_\alpha(e^{-n})
			\leq
			C\scl_\beta(e^{-n}).
		\end{align*}
		Taking infima over Bowen covers and then passing to the limits
		in $N$ and $\epsilon$, we obtain
		\begin{align}
			\label{equ 2.1}
			M_{\scl_\alpha}(T,K)
			\leq
			C M_{\scl_\beta}(T,K).
		\end{align}
		If $M_{\scl_\beta}(T,K)<+\infty$, the arbitrariness of $C$
		in \eqref{equ 2.1} gives
		$M_{\scl_\alpha}(T,K)=0$. Similarly, if
		$M_{\scl_\alpha}(T,K)>0$, then
		\eqref{equ 2.1} gives
		\begin{align*}
			M_{\scl_\beta}(T,K)
			\geq
			\frac{1}{C}M_{\scl_\alpha}(T,K).
		\end{align*}
		Letting $C\to0^+$ yields
		$M_{\scl_\beta}(T,K)=+\infty$.

		Define $\alpha_c
		:=
		\inf\bigl\{
		\alpha>0:
		M_{\scl_\alpha}(T,K)=0
		\bigr\},$
		with the convention $\inf\emptyset=+\infty$.
		The preceding implications show that
		$M_{\scl_\alpha}(T,K)=0$ for every $\alpha>\alpha_c$
		and $M_{\scl_\alpha}(T,K)=+\infty$ for every
		$0<\alpha<\alpha_c$.
	\end{proof}

	\begin{definition}\label{def 2.5}
		The \emph{Bowen entropy scale of $T$ restricted to $K$} is defined by
		\begin{align*}
			h_{\scl}^{B}(T,K)
			=
			\sup\{\alpha:M_{\scl_\alpha}(T,K)=\infty\}
			=
			\inf\{\alpha:M_{\scl_\alpha}(T,K)=0\}.
		\end{align*}
		Here we use the conventions
		$\sup\emptyset=0$ and $\inf\emptyset=+\infty$.
	\end{definition}
	Here and below, the dependence of the entropy scale on the underlying
	scaling family $\scl$ will be understood from the notation.
	If $\scl_\alpha(\epsilon)=\epsilon^\alpha$, then the above quantity
	coincides with the classical Bowen topological entropy
	(see \cite{Bow73,FH12}).

	\begin{proposition}\label{prop 2.6}
		\begin{enumerate}
			\item[\rm (1)](Monotonicity): If $K_1\subseteq K_2$, then one has
			$h_{\scl}^{B}(T,K_1)\leq h_{\scl}^{B}(T,K_2).$

			\item[\rm (2)](Countable stability):
			If $K$ is a countable union  of $K_i$, then $$h_{\scl}^{B}(T,K )=\sup_i h_{\scl}^{B}(T,K_i).$$

			\item[\rm (3)] The value of $h_{\scl}^{B}(T,K)$ is independent of the choice of $d\in \mathcal D(X)$; hence it is a topological invariant.
		\end{enumerate}
	\end{proposition}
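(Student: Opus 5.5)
The three parts all come from elementary properties of the outer-measure-type quantity $M_{\scl_\alpha}(T,\cdot)$, combined with the critical-value dichotomy of Proposition~\ref{prop 2.4}.

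\emph{Monotonicity.} If $K_1\subseteq K_2$, every cover of $K_2$ by Bowen balls $B_{n_i}(x_i,\epsilon)$ with $n_i\geq N$ is also such a cover of $K_1$. Hence $M_{\scl_\alpha}(T,K_1,\epsilon,N)\leq M_{\scl_\alpha}(T,K_2,\epsilon,N)$, and this survives the limits in $N$ and $\epsilon$. So if $M_{\scl_\alpha}(T,K_2)=0$, then $M_{\scl_\alpha}(T,K_1)=0$. Taking the infimum description in Definition~\ref{def 2.5} gives (1).

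\emph{Countable stability.} Let $K=\bigcup_i K_i$. The inequality $\geq$ follows from (1). For $\leq$, put $s=\sup_i h_{\scl}^B(T,K_i)$ and assume $s<\infty$. Fix $\alpha>s$ and choose $\alpha'\in(s,\alpha)$. For each $i$, Proposition~\ref{prop 2.4} gives $M_{\scl_{\alpha'}}(T,K_i)=0$, and in particular this quantity is finite.

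The difficulty is that the $\epsilon$-limit is not uniform in $i$. A direct countable subadditivity argument would need a single $\epsilon$ that works for all $i$ at once. I would instead use subadditivity at each fixed $\epsilon$ and $N$. Concatenating near-optimal covers of each $K_i$ gives
\[
M_{\scl_\alpha}(T,K,\epsilon,N)\leq\sum_i M_{\scl_\alpha}(T,K_i,\epsilon,N)\leq\sum_i M_{\scl_\alpha}(T,K_i,\epsilon),
\]
and therefore
\[
M_{\scl_\alpha}(T,K,\epsilon)\leq\sum_i M_{\scl_\alpha}(T,K_i,\epsilon)\leq\sum_i M_{\scl_\alpha}(T,K_i).
\]
Each term $M_{\scl_\alpha}(T,K_i)$ vanishes: the limit in $\epsilon$ is monotone non-decreasing as $\epsilon\to0$, so each $M_{\scl_\alpha}(T,K_i,\epsilon)$ is bounded by $M_{\scl_\alpha}(T,K_i)$, and this is $0$ by Proposition~\ref{prop 2.4} since $\alpha>h_{\scl}^B(T,K_i)$. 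Hence $M_{\scl_\alpha}(T,K,\epsilon)=0$ for every $\epsilon$, so $M_{\scl_\alpha}(T,K)=0$ and $h_{\scl}^B(T,K)\leq\alpha$. Letting $\alpha\downarrow s$ gives (2). The monotonicity in $\epsilon$ is what removes the apparent uniformity issue.

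\emph{Independence of the metric.} Let $d,d'\in\mathcal D(X)$. By compactness the identity map is uniformly continuous, so for each $\epsilon>0$ there is $\delta>0$ such that $d'(x,y)<\delta$ implies $d(x,y)<\epsilon$. Then every $d'$-Bowen ball $B^{d'}_n(x,\delta)$ lies in the $d$-Bowen ball $B^{d}_n(x,\epsilon)$ with the same $n$. Replacing each $d'$-ball of a cover by the corresponding $d$-ball keeps the weights $\scl_\alpha(e^{-n_i})$ unchanged, so $M^{d}_{\scl_\alpha}(T,K,\epsilon,N)\leq M^{d'}_{\scl_\alpha}(T,K,\delta,N)$. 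Passing to the limits gives $M^d_{\scl_\alpha}(T,K)\leq M^{d'}_{\scl_\alpha}(T,K)$. Exchanging the roles of $d$ and $d'$ gives equality for every $\alpha$, hence equal critical values.

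Topological invariance then follows. For a conjugacy $\phi$, one transports the metric by $\phi$: since $\phi$ maps Bowen balls to Bowen balls of the pulled-back metric, $h_{\scl}^B(T,K)$ is unchanged. No scaling axioms are needed for this step.
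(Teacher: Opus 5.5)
Your proof is correct and follows the paper's argument. Monotonicity comes from restricting covers. Countable stability comes from concatenating covers of the $K_i$ at a fixed radius and order: the paper's choice of total weight $<\eta 2^{-i}$ is your fixed-$\epsilon$ subadditivity combined with $M_{\scl_\alpha}(T,K_i,\epsilon,N)\le M_{\scl_\alpha}(T,K_i)=0$. Metric independence comes from nested Bowen balls given by uniform equivalence. The auxiliary parameter $\alpha'$ in part (2) is never used and can be dropped.
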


	\begin{proof}
		Part (1) is immediate. For (2), put
		$c:=\sup_i h_{\scl}^{B}(T,K_i)$. Monotonicity gives one inequality. If
		$\beta>c$, then
		$M_{\scl_\beta}(T,K_i)=0$ for every $i$. Given $\epsilon,\eta>0$ and
		$N\in\mathbb N$, cover $K_i$ by Bowen balls of orders at least $N$ and
		total weight smaller than $\eta2^{-i}$. Their union covers $K$ with total
		weight at most $\eta$, so $M_{\scl_\beta}(T,K)=0$. Letting $\beta$
		decrease to $c$ proves (2).

		For (3), let $d_1,d_2\in\mathcal D(X)$. Uniform equivalence on the compact
		space $X$ implies that, for every $\delta>0$, some $\epsilon>0$ satisfies
		$B_n^{d_1}(x,\epsilon)\subset B_n^{d_2}(x,\delta)$ for all $x$ and $n$.
		Hence
		$M_{\scl_\alpha}(T,K,\delta,d_2)\leq
		M_{\scl_\alpha}(T,K,\epsilon,d_1)$, and therefore
		$h_{\scl}^{B}(T,K,d_2)\leq h_{\scl}^{B}(T,K,d_1)$. Interchanging the
		metrics gives equality.
	\end{proof}

	\subsection{Upper capacity entropy scales}\label{subsec 2.3}

	The Bowen construction is based on variable-length coverings, whereas
	upper capacity entropy is naturally formulated in terms of separated or
	spanning sets. Generalized topological entropy based on separated and spanning sets with a
	prescribed normalization $f(n)$ was previously studied by Galatolo
	\cite{Gal03}. In this subsection, we introduce a
	capacity-type entropy scale defined through separated sets and establish its
	relation with the entropy scale obtained from the dimensional construction.

	We first recall the notions of spanning and separated sets. Let $K$ be a
	non-empty compact subset of $X$. For $\epsilon>0$ and $n\in\mathbb N$, a set
	$E\subset X$ is called an \emph{$(n,\epsilon)$-spanning set} of $K$ if
	$K\subset \bigcup_{x\in E}B_n(x,\epsilon).
	$
	The smallest cardinality of all $(n,\epsilon)$-spanning sets of $K$ is
	denoted by $r_n(K,\epsilon)$.
	A set $F\subset K$ is called an \emph{$(n,\epsilon)$-separated set} of $K$ if for
	any distinct $x,y\in F$,
	$	d_n(x,y)\geq \epsilon.$
	The maximal cardinality of all $(n,\epsilon)$-separated sets of $K$ is denoted
	by $s_n(K,\epsilon)$.

	Let $\scl=\{\scl_\alpha\}_{\alpha\geq0}$ be a scaling family. We define
	\begin{align*}
		\Lambda_{\scl_\alpha}(T,K,\epsilon,n)=&
		s_n(K,\epsilon)\scl_\alpha(e^{-n}),\\
		\overline{\Lambda}_{\scl_\alpha}(T,K)
		=&
		\lim_{\epsilon\to0}
		\limsup_{n\to\infty}
		\Lambda_{\scl_\alpha}(T,K,\epsilon,n).
	\end{align*}
	As in the previous constructions, there exists a critical value of $\alpha$
	at which $\overline{\Lambda}_{\scl_\alpha}(T,K)$ changes from $+\infty$ to
	$0$.

	\begin{definition}\label{def 2.7}
		The \emph{upper capacity entropy scale of $T$ restricted to $K$} is defined by
		\begin{align*}
			\overline{h}_{\scl}^{C}(T,K)
			=
			\inf
			\left\{
			\alpha>0:
			\overline{\Lambda}_{\scl_\alpha}(T,K)=0
			\right\}
			=
			\sup
			\left\{
			\alpha>0:
			\overline{\Lambda}_{\scl_\alpha}(T,K)=\infty
			\right\}.
		\end{align*}
		When $K=X$, we omit the restriction of $K$.
	\end{definition}

	By replacing the $\limsup$ in the definition of
	$\overline{\Lambda}_{\scl_\alpha}$ with $\liminf$, one obtains the lower
	capacity entropy scale $\underline{h}_{\scl}^{C}(T,K)$. The following
	relations are immediate.

	\begin{remark}\label{rem 2.8}
		For every non-empty compact subset $K\subset X$, one has
		\begin{align*}
			h_{\scl}^{B}(T,K)
			\leq
			\underline{h}_{\scl}^{C}(T,K)
			\leq
			\overline{h}_{\scl}^{C}(T,K).
		\end{align*}
		Moreover, since	$
		s_n(K,2\epsilon)
		\leq
		r_n(K,\epsilon)
		\leq
		s_n(K,\epsilon),$
		the value of $\overline{h}_{\scl}^{C}(T,K)$ does not change if the separated
		set cardinality $s_n(K,\epsilon)$ is replaced by the spanning number
		$r_n(K,\epsilon)$.
	\end{remark}

	\begin{remark}
		\label{rem 2.9}
		Let $f:\mathbb N\to(0,+\infty)$ be non-decreasing with
		$f(n)\to+\infty$, and suppose that
		\begin{align*}
			\scl_\alpha(e^{-n})
			=e^{-\alpha f(n)}
		\end{align*}
		is induced by an admissible scaling family. Then $h_{\scl}^{C}(T,X)$ is concide with the generalized topological entropy introduced in
		\cite{Gal03}, which is defined as
		\begin{align*}
			h^f(T)=\lim_{\epsilon\to0}
			\limsup_{n\to\infty}
			\frac{\log s_n(X,\epsilon)}{f(n)}.
		\end{align*}
	\end{remark}
	The next lemma shows that, under a compatibility condition on the scaling
	family, the capacity entropy scale admits an equivalent formulation in terms
	of the growth rate of separated sets.

	\begin{lemma}\label{lem 2.10}
		Let
		$
		\alpha=\overline{h}_{\scl}^{C}(T,K)\in(0,\infty).$
		Suppose that there exists $\beta>0$ such that
		\begin{align}\label{equ 2.2}
			\lim_{\delta\to0}
			\liminf_{n\to\infty}
			\frac{\log\scl_{\alpha-\delta}(e^{-n})}{\log\scl_\beta(e^{-n})}
			=\lim_{\delta\to0}
			\limsup_{n\to\infty}
			\frac{\log\scl_{\alpha+\delta}(e^{-n})}{\log\scl_\beta(e^{-n})}
			=	\alpha .
		\end{align}
		Then
		\begin{align*}
			\overline{h}_{\scl}^{C}(T,K)=	\overline{H}_{\scl}^{\beta}(T,K)=
			\lim_{\epsilon\to0}	\limsup_{n\to\infty}
			\frac{\log s_n(K,\epsilon)}{-\log\scl_\beta(e^{-n})}=
			\lim_{\epsilon\to0}\limsup_{n\to\infty}
			\frac{\log r_n(K,\epsilon)}{	-\log\scl_\beta(e^{-n})}.
		\end{align*}
	\end{lemma}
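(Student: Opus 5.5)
Write $b(n):=-\log\scl_\beta(e^{-n})$ and, for a fixed $\epsilon>0$,
\[
\rho(\epsilon):=\limsup_{n\to\infty}\frac{\log s_n(K,\epsilon)}{b(n)} .
\]
By the standing assumption $\scl_\beta(\epsilon)\to0$ we have $b(n)\to+\infty$. Moreover $\rho(\epsilon)$ is non-increasing in $\epsilon$, so the limit $\overline H^\beta_{\scl}(T,K)=\lim_{\epsilon\to0}\rho(\epsilon)$ exists. The equality of the separated and spanning versions follows from $s_n(K,2\epsilon)\le r_n(K,\epsilon)\le s_n(K,\epsilon)$ (Remark~\ref{rem 2.8}): after dividing by $b(n)$ and letting $\epsilon\to0$, the two limits sandwich each other. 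It therefore remains to show $\overline H^\beta_{\scl}(T,K)=\alpha$, and I will prove the two inequalities separately using \eqref{equ 2.2}.

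For the lower bound $\overline H^\beta_{\scl}\ge\alpha$, fix a small $\delta>0$. Since $\alpha-\delta<\alpha=\overline h^C_{\scl}(T,K)$, Definition~\ref{def 2.7} together with the monotone critical structure gives $\overline\Lambda_{\scl_{\alpha-\delta}}(T,K)=\infty$, which in particular is $>1$. Because $\limsup_n s_n(K,\epsilon)\scl_{\alpha-\delta}(e^{-n})$ is non-increasing in $\epsilon$, this yields, for every sufficiently small $\epsilon$, infinitely many $n$ with
\[
\log s_n(K,\epsilon)\ge -\log\scl_{\alpha-\delta}(e^{-n}) .
\]
Dividing by $b(n)$, and writing $\liminf$ ratio for the quantity $\liminf_n \log\scl_{\alpha-\delta}(e^{-n})/\log\scl_\beta(e^{-n})$, we get for those $n$ the bound $\log s_n/b(n)\ge \bigl(\text{liminf ratio}\bigr)-o(1)$. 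Hence $\rho(\epsilon)\ge \liminf_n \log\scl_{\alpha-\delta}(e^{-n})/\log\scl_\beta(e^{-n})$. Letting $\epsilon\to0$ and then $\delta\to0$, the first equality in \eqref{equ 2.2} gives $\overline H^\beta_{\scl}\ge\alpha$.

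For the upper bound $\overline H^\beta_{\scl}\le\alpha$, suppose instead that $\overline H^\beta_{\scl}>\alpha+3\eta$ for some $\eta>0$. Choose $\delta$ so that $\limsup_n \log\scl_{\alpha+\delta}(e^{-n})/\log\scl_\beta(e^{-n})<\alpha+\eta$. Then for all large $n$ we have $-\log\scl_{\alpha+\delta}(e^{-n})\le(\alpha+\eta)b(n)$. For small $\epsilon$, $\rho(\epsilon)>\alpha+2\eta$, so along infinitely many $n$,
\[
\log\bigl(s_n\,\scl_{\alpha+\delta}(e^{-n})\bigr)\ge (\alpha+2\eta)b(n)-(\alpha+\eta)b(n)=\eta\, b(n)\to\infty .
\]
Thus $\overline\Lambda_{\scl_{\alpha+\delta}}(T,K)=\infty$, contradicting $\alpha+\delta>\overline h^C_{\scl}$, which forces $\overline\Lambda_{\scl_{\alpha+\delta}}=0$.

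The main obstacle is bookkeeping of the order of limits. The argument needs two things: the critical dichotomy for $\overline\Lambda$, namely $\infty$ below and $0$ above the critical value, which follows exactly as in Proposition~\ref{prop 2.4} using Fact~\ref{fact 2.2}; and uniformity in $\epsilon$, which I handle by the monotonicity of $\rho(\epsilon)$ and of $\limsup_n s_n\scl(e^{-n})$ in $\epsilon$. A further point to check is that $b(n)\to\infty$, so that the additive constants and $o(1)$ terms vanish after normalization. This is the only place where the standing assumption that $\scl_\beta$ decays to $0$ is used.
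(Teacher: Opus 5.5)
Your proof is correct and follows the paper's argument. The lower bound is the same: you get infinitely many $n$ with $s_n(K,\epsilon)\scl_{\alpha-\delta}(e^{-n})\ge 1$ and then apply the $\liminf$ half of \eqref{equ 2.2}. Your upper bound by contradiction is just the contrapositive of the paper's direct estimate: $s_n(K,\epsilon)\scl_{\alpha+\delta}(e^{-n})\le 1$ for large $n$ immediately gives $\overline H^{\beta}_{\scl}(T,K)\le\limsup_n \log\scl_{\alpha+\delta}(e^{-n})/\log\scl_\beta(e^{-n})$. (As an aside, you do not need the standing assumption for $b(n)\to+\infty$, which the paper only imposes in Sections~\ref{sec 4} and~\ref{sec 5}. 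For $\beta>0$ it follows from $\scl_\beta=o(\scl_{\beta/2})$ together with the boundedness of the non-decreasing function $\scl_{\beta/2}$ near $0$.)
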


	\begin{proof}
		We first prove that
		$\overline H_{\scl}^{\beta}(T,K)\leq\alpha.$

		Fix $\delta>0$. By the definition of the upper capacity entropy
		scale, one has
		$\overline{\Lambda}_{\scl_{\alpha+\delta}}(T,K)=0.$
		Hence, for every sufficiently small $\epsilon>0$, there exists
		$N=N(\epsilon,\delta)$ such that
		\begin{align*}
			s_n(K,\epsilon)\scl_{\alpha+\delta}(e^{-n})
			\leq1
		\end{align*}
		for all $n\geq N$. Therefore,
		\begin{align*}
			\frac{\log s_n(K,\epsilon)}
			{-\log\scl_\beta(e^{-n})}
			\leq
			\frac{-\log\scl_{\alpha+\delta}(e^{-n})}
			{-\log\scl_\beta(e^{-n})}.
		\end{align*}
		Taking the upper limit as $n\to\infty$ and then letting
		$\epsilon\to0$, we obtain
		\begin{align}
			\label{equ 2.3}
			\overline H_{\scl}^{\beta}(T,K)
			\leq
			\limsup_{n\to\infty}
			\frac{-\log\scl_{\alpha+\delta}(e^{-n})}
			{-\log\scl_\beta(e^{-n})}.
		\end{align}
		Letting $\delta\to0$ in
		\eqref{equ 2.3} and using
		\eqref{equ 2.2}, we conclude that $\overline H_{\scl}^{\beta}(T,K)\leq\alpha.$

		Conversely, fix $0<\delta<\alpha$. By the definition of
		$\alpha$, one has $	\overline{\Lambda}_{\scl_{\alpha-\delta}}(T,K)
		=+\infty.$
		Consequently, there exist $\epsilon>0$ and a sequence
		$n_j\to\infty$ such that
		\begin{align*}
			s_{n_j}(K,\epsilon)
			\scl_{\alpha-\delta}(e^{-n_j})
			\geq1
		\end{align*}
		for every $j$. Thus,
		\begin{align*}
			\frac{\log s_{n_j}(K,\epsilon)}
			{-\log\scl_\beta(e^{-n_j})}
			\geq
			\frac{-\log\scl_{\alpha-\delta}(e^{-n_j})}
			{-\log\scl_\beta(e^{-n_j})}.
		\end{align*}
		It follows that
		\begin{align}
			\label{equ 2.4}
			\overline H_{\scl}^{\beta}(T,K)
			\geq
			\limsup_{j\to\infty}
			\frac{\log s_{n_j}(K,\epsilon)}
			{-\log\scl_\beta(e^{-n_j})}
			\geq
			\liminf_{n\to\infty}
			\frac{-\log\scl_{\alpha-\delta}(e^{-n})}
			{-\log\scl_\beta(e^{-n})}.
		\end{align}
		Letting $\delta\to0$ in
		\eqref{equ 2.4} and applying
		\eqref{equ 2.2} again gives
		$\overline H_{\scl}^{\beta}(T,K)\geq\alpha.$
		Therefore,
		\begin{align*}
			\overline H_{\scl}^{\beta}(T,K)
			=
			\overline h_{\scl}^{C}(T,K).
		\end{align*}
	\end{proof}
	\begin{remark}
		The assumption
		$	\overline{h}_{\scl}^{C}(T,K)\in(0,+\infty)$
		is imposed only to formulate the two-sided condition involving
		$\alpha-\delta$ and $\alpha+\delta$. The cases
		$\overline{h}_{\scl}^{C}(T,K)=0$ and
		$\overline{h}_{\scl}^{C}(T,K)=+\infty$
		can be treated under suitable one-sided assumptions.
		More precisely, if
		$\overline{h}_{\scl}^{C}(T,K)=0$, it is enough to assume
		\begin{align*}
			\lim_{\delta\to0}
			\limsup_{n\to\infty}
			\frac{\log\scl_{\delta}(e^{-n})}
			{\log\scl_{\beta}(e^{-n})}=0.
		\end{align*}
		If
		$\overline{h}_{\scl}^{C}(T,K)=+\infty$, it is enough to assume
		\begin{align*}
			\lim_{\gamma\to+\infty}
			\liminf_{n\to\infty}
			\frac{\log\scl_{\gamma}(e^{-n})
			}{\log\scl_{\beta}(e^{-n})}=+\infty.
		\end{align*}
	\end{remark}

	For scaling families of the form
	$	\scl_\alpha(\epsilon)=(f(\epsilon))^\alpha,$
	condition
	\eqref{equ 2.2} is automatically satisfied with $\beta=1$.

	\subsection{Local entropy scales}
	\label{subsec 2.4}

	The classical local entropy introduced by Brin and Katok
	\cite{BK83} provides a measure-theoretic description of orbit complexity and
	plays an important role in variational principles; see also
	\cite{FH12,ZC16}. Generalized local quantities normalized by a prescribed growth function were
	considered by Galatolo \cite{Gal07}, who related them to the number of Bowen
	balls required to cover sets of arbitrarily large measure. In order to obtain a measure-theoretic counterpart of the
	Bowen entropy scale for general scaling families, we introduce the notion of
	local entropy scales.

	Let $\mu$ be a Borel probability measure on $(X,d)$ and let
	$\scl=\{\scl_\alpha\}_{\alpha\geq0}$ be a scaling family. For
	$\epsilon>0$, define
	\begin{align*}
		\underline{h}_{\mu,\scl_\alpha}(T,x,\epsilon)
		&=
		\liminf_{n\to\infty}
		\frac{\scl_\alpha(e^{-n})}
		{\mu(B_n(x,\epsilon))},
		\\
		\overline{h}_{\mu,\scl_\alpha}(T,x,\epsilon)
		&=
		\limsup_{n\to\infty}
		\frac{\scl_\alpha(e^{-n})}
		{\mu(B_n(x,\epsilon))}.
	\end{align*}
	Here and below, we adopt the convention that $\frac{\scl_\alpha(e^{-n})}{\mu(B_n(x,\epsilon))}
	=+\infty$
	whenever $\mu(B_n(x,\epsilon))=0.$
	Since the scaling family is ordered with respect to the parameter
	$\alpha$, the maps
	\begin{align*}
		\alpha\mapsto
		\underline{h}_{\mu,\scl_\alpha}(T,x,\epsilon),
		\qquad
		\alpha\mapsto
		\overline{h}_{\mu,\scl_\alpha}(T,x,\epsilon)
	\end{align*}
	are non-increasing.
	The following proposition shows that these quantities also determine a
	critical scaling exponent.

	\begin{proposition}
		\label{prop 2.12}
		For every fixed $(x,\epsilon)$, there exist critical values
		$\underline\alpha_c,\overline\alpha_c\in[0,+\infty]$ such that
		\[
		\begin{aligned}
			\underline h_{\mu,\scl_\alpha}(T,x,\epsilon)
			&=
			\begin{cases}
				+\infty, & 0<\alpha<\underline\alpha_c,\\
				0,       & \alpha>\underline\alpha_c,
			\end{cases}
			&
			\overline h_{\mu,\scl_\alpha}(T,x,\epsilon)
			&=
			\begin{cases}
				+\infty, & 0<\alpha<\overline\alpha_c,\\
				0,       & \alpha>\overline\alpha_c.
			\end{cases}
		\end{aligned}
		\]

	\end{proposition}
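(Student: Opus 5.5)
The argument mirrors the proof of Proposition~\ref{prop 2.4}, applied pointwise to the ratio $\scl_\alpha(e^{-n})/\mu(B_n(x,\epsilon))$ instead of to covering sums. Fix $(x,\epsilon)$ and $0<\beta<\alpha$. By Fact~\ref{fact 2.2}, for every $C>0$ there is $N_C$ such that $\scl_\alpha(e^{-n})\le C\,\scl_\beta(e^{-n})$ for all $n\ge N_C$, since $e^{-n}\to0$. Dividing by $\mu(B_n(x,\epsilon))$ gives
\begin{align*}
\frac{\scl_\alpha(e^{-n})}{\mu(B_n(x,\epsilon))}\le C\,\frac{\scl_\beta(e^{-n})}{\mu(B_n(x,\epsilon))}, \qquad n\ge N_C .
\end{align*}
When $\mu(B_n(x,\epsilon))=0$, both sides equal $+\infty$ by the stated convention, so the inequality still holds. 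Taking $\liminf$ and $\limsup$ in $n$ yields
\begin{align*}
\underline h_{\mu,\scl_\alpha}(T,x,\epsilon)\le C\,\underline h_{\mu,\scl_\beta}(T,x,\epsilon),\qquad
\overline h_{\mu,\scl_\alpha}(T,x,\epsilon)\le C\,\overline h_{\mu,\scl_\beta}(T,x,\epsilon).
\end{align*}
We interpret $C\cdot(+\infty)=+\infty$, and the inequalities are valid in $[0,+\infty]$.

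From these bounds we read off the two implications exactly as in Proposition~\ref{prop 2.4}. If the $\beta$-quantity is finite, letting $C\to0^+$ forces the $\alpha$-quantity to be $0$. If the $\alpha$-quantity is positive, then the $\beta$-quantity is at least $C^{-1}$ times it for every $C>0$, hence equals $+\infty$. This applies separately to the lower and upper versions.

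Next define
\begin{align*}
\underline\alpha_c:=\inf\{\alpha>0:\underline h_{\mu,\scl_\alpha}(T,x,\epsilon)=0\},
\end{align*}
with $\inf\emptyset=+\infty$, and define $\overline\alpha_c$ analogously.

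For $\alpha>\underline\alpha_c$, pick $\beta\in(\underline\alpha_c,\alpha)$ with $\underline h_{\mu,\scl_\beta}(T,x,\epsilon)=0$. This value is finite, so the first implication gives $\underline h_{\mu,\scl_\alpha}(T,x,\epsilon)=0$. The same conclusion also follows directly from the monotonicity in $\alpha$ noted before the proposition.

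For $0<\alpha<\underline\alpha_c$, pick $\gamma\in(\alpha,\underline\alpha_c)$. By the definition of the infimum, $\underline h_{\mu,\scl_\gamma}(T,x,\epsilon)>0$. The second implication with the pair $\alpha<\gamma$ then gives $\underline h_{\mu,\scl_\alpha}(T,x,\epsilon)=+\infty$.

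The only delicate point is this last step. The dichotomy $+\infty$ versus $0$ is not automatic from monotonicity alone. It needs the strict gain from Fact~\ref{fact 2.2}, namely that the constant $C$ can be taken arbitrarily small, to rule out finite positive values strictly below the critical parameter. Care is also needed with the zero-measure convention, which we handled above by noting that both sides are infinite. Finally, the endpoint cases $\underline\alpha_c\in\{0,+\infty\}$ are covered because the corresponding ranges are then empty.
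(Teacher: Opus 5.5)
Your proof is correct and follows the paper's argument: Fact~\ref{fact 2.2} gives $\underline h_{\mu,\scl_\alpha}(T,x,\epsilon)\le C\,\underline h_{\mu,\scl_\beta}(T,x,\epsilon)$ and the upper analogue for every $C>0$, and the dichotomy then follows exactly as in Proposition~\ref{prop 2.4}. Your treatment of the zero-measure convention, your definition of the critical values as an infimum, and your handling of the endpoint cases fill in details the paper leaves implicit.
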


	\begin{proof}
		Let $0<\beta<\alpha$. By Fact~\ref{fact 2.2}, for every
		$C>0$ and all sufficiently large $n$,
		\begin{align*}
			\scl_\alpha(e^{-n})
			\leq
			C\scl_\beta(e^{-n}).
		\end{align*}
		Dividing by $\mu(B_n(x,\epsilon))$ and taking the corresponding
		lower or upper limits gives
		\begin{align*}
			\underline h_{\mu,\scl_\alpha}(T,x,\epsilon)
			&\leq
			C\underline h_{\mu,\scl_\beta}(T,x,\epsilon),\\
			\overline h_{\mu,\scl_\alpha}(T,x,\epsilon)
			&\leq
			C\overline h_{\mu,\scl_\beta}(T,x,\epsilon).
		\end{align*}
		The conclusion now follows exactly as in
		Proposition~\ref{prop 2.4}.
	\end{proof}

	Subsequently, we introduce the notion of local entropy scales via the following definition.
	\begin{definition}\label{def 2.13}
		The  \emph{upper and lower local entropy scales associated with $\mu$ at $x$} are defined as
		\begin{align*}
			\overline{h}_{\mu,\scl}(T,x,\epsilon)=&\inf \left\{\alpha>0:\overline{h}_{\mu,\scl_\alpha}(T,x,\epsilon)=0\right\}=\sup \left\{\alpha>0: \overline{h}_{\mu,\scl_\alpha}(T,x,\epsilon)=\infty\right\},\\
			\underline{h}_{\mu,\scl}(T,x,\epsilon)=&\inf \left\{\alpha>0:\underline{h}_{\mu,\scl_\alpha}(T,x,\epsilon)=0\right\}=\sup \left\{\alpha>0: \underline{h}_{\mu,\scl_\alpha}(T,x,\epsilon)=\infty\right\}.
		\end{align*}
		Moreover, $\overline{h}_{\mu,\scl}(T,x,\epsilon)$ and $\underline{h}_{\mu,\scl}(T,x,\epsilon)$ are measurable. Then \emph{the    upper and lower measure-theoretic entropy scales associated with $\mu$} are defined by
		\begin{align*}
			\overline{h}_{\mu,\scl}(T)=\int \overline{h}_{\mu,\scl}(T,x)\,d\mu(x), \quad \underline{h}_{\mu,\scl}(T)=\int \underline{h}_{\mu,\scl}(T,x)\,d\mu(x),
		\end{align*}
		respectively, where
		\begin{align*}
			\overline{h}_{\mu,\scl}(T,x)=\lim_{\epsilon\to 0}\overline{h}_{\mu,\scl}(T,x,\epsilon) \text{ and } \underline{h}_{\mu,\scl}(T,x)=\lim_{\epsilon\to 0}\underline{h}_{\mu,\scl}(T,x,\epsilon).
		\end{align*}
	\end{definition}

	\begin{remark}\label{rem 2.14}
		The measurability of both quantities follows from the same argument. For every $s>0$, the set
		$E:=\left\{x\in X: \underline{h}_{\mu,\scl}(T,x)<s\right\}$ is Borel measurable. Moreover, since both functions are nondecreasing as $\epsilon\to0$,
		the monotone convergence theorem allows us to interchange limits and
		integrals.
	\end{remark}

	\begin{remark}
		For the power scaling
		$\scl_\alpha(\epsilon)=\epsilon^\alpha$,
		Definition~\ref{def 2.13} reduces to the classical lower and upper
		Brin--Katok local entropies \cite{BK83}. More generally, suppose that
		there exist a function $b(n)\to+\infty$ and a continuous strictly
		increasing bijection $c:[0,+\infty)\to[0,+\infty)$ such that
		\begin{align*}
			\frac{-\log\scl_\alpha(e^{-n})}{b(n)}
			\longrightarrow c(\alpha)
		\end{align*}
		for every $\alpha>0$. Then
		\begin{align*}
			\underline h_{\mu,\scl}(T,x)
			&=
			c^{-1}\left(
			\lim_{\epsilon\to0}
			\liminf_{n\to\infty}
			\frac{-\log\mu(B_n(x,\epsilon))}{b(n)}
			\right),\\
			\overline h_{\mu,\scl}(T,x)
			&=
			c^{-1}\left(
			\lim_{\epsilon\to0}
			\limsup_{n\to\infty}
			\frac{-\log\mu(B_n(x,\epsilon))}{b(n)}
			\right),
		\end{align*}
		where $c^{-1}(+\infty):=+\infty$. In particular, when
		$\scl_\alpha(e^{-n})=e^{-\alpha b(n)}$, these are the generalized
		local Bowen-ball decay rates considered in \cite{Gal07}.
	\end{remark}

	\section{A Billingsley-type theorem and the variational principle}
	\label{sec 3}
	In this section, we first establish a Billingsley-type theorem relating local
	entropy scales to Bowen entropy scales on Borel subsets. We then introduce
	weighted entropy scales and prove the variational principle on compact
	subsets by a Frostman-type measure construction.

	\subsection{A Billingsley-type theorem for entropy scales}
	The local entropy scales introduced in the previous subsection provide a
	measure-theoretic description of dynamical complexity. To relate them to the
	Bowen entropy scales defined through Carath\'{e}odory-type coverings, we establish
	a Billingsley-type theorem for entropy scales. The proof relies on a covering
	argument that extracts a suitable disjoint family from a collection of Bowen
	balls. We first recall a covering lemma of Ma and Wen \cite{MW08}, which will
	be used throughout this section.

	\begin{lemma}[Ma-Wen covering lemma]\label{lem 3.1}
		Let $\mathcal B=\{B_n(x,\epsilon)\colon x\in X, n\in \mathbb N\}$. For any family  $\mathcal F\subseteq \mathcal B,$
		there exists a (not necessarily countable) subfamily $\mathcal G\subseteq \mathcal F$  of pairwise
		disjoint Bowen balls such that
		\begin{align*}
			\bigcup_{B\in \mathcal F}B\subseteq \bigcup_{B_{n}(x,\epsilon)\in \mathcal G}B_{n}(x,3\epsilon).
		\end{align*}
	\end{lemma}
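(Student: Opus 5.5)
The plan is the standard Vitali-type (5r-type) covering argument, carried out for Bowen balls with a fixed radius $\epsilon$ and a variable order $n$. Larger balls in the Bowen-metric sense correspond to smaller orders $n$. The key observation is the following.

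If $B_n(x,\epsilon)\cap B_m(y,\epsilon)\neq\emptyset$ with $n\le m$, pick a point $z$ in the intersection. Then for every $w\in B_m(y,\epsilon)$ we have $d_n(w,x)\le d_n(w,y)+d_n(y,z)+d_n(z,x)<3\epsilon$, using $d_n\le d_m$. Hence $B_m(y,\epsilon)\subseteq B_n(x,3\epsilon)$. The ball with the smaller order therefore absorbs, after tripling the radius, any ball it meets that has an equal or larger order.

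First, I sort $\mathcal F$ by order. For $n\in\mathbb N$, let $\mathcal F_n$ be the balls of $\mathcal F$ of order exactly $n$; only orders $n\ge 1$ occur. I then build $\mathcal G_1\subseteq\mathcal G_2\subseteq\cdots$ inductively.

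For the base step, let $\mathcal G_1$ be a maximal pairwise disjoint subfamily of $\mathcal F_1$, which exists by Zorn's lemma.

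For the inductive step, given $\mathcal G_{n-1}$, let $\mathcal H_n$ be the set of balls in $\mathcal F_n$ disjoint from every ball of $\mathcal G_{n-1}$. Choose a maximal pairwise disjoint subfamily $\mathcal G_n'\subseteq\mathcal H_n$, again by Zorn's lemma, and set $\mathcal G_n=\mathcal G_{n-1}\cup\mathcal G_n'$. Put $\mathcal G=\bigcup_n\mathcal G_n$.

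Disjointness holds by construction. Two balls of $\mathcal G$ lie in some common $\mathcal G_n$, and each stage adds balls that are disjoint from earlier ones and from each other.

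For the covering property, take $B=B_m(y,\epsilon)\in\mathcal F$. There are two cases.

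If $B\notin\mathcal H_m$, then $B$ meets some ball $B_n(x,\epsilon)\in\mathcal G_{m-1}$ with $n\le m-1$.

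If $B\in\mathcal H_m$, then by maximality of $\mathcal G_m'$ either $B\in\mathcal G_m'$, or $B$ meets some ball of $\mathcal G_m'$, which has order $m$.

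In every case, $B$ meets a ball $B_n(x,\epsilon)\in\mathcal G$ with $n\le m$, or $B$ itself lies in $\mathcal G$. By the key observation, $B\subseteq B_n(x,3\epsilon)$ in the first situation, and trivially $B\subseteq B_m(y,3\epsilon)$ in the second. Taking the union over $B\in\mathcal F$ gives the claimed inclusion.

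The main technical point is the step that requires the order-minimal ball to be the absorbing one. This is why I proceed by increasing order: the integer-valued orders are well-ordered from below, so no limiting argument is needed, unlike the Euclidean case where radii can be unbounded. The only non-constructive ingredient is Zorn's lemma at each stage, which is why $\mathcal G$ need not be countable.

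A minor subtlety is that the same set may appear with different centres or orders. I treat elements of $\mathcal F$ as pairs (centre, order), and the argument above is unaffected by this.
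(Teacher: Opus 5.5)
Your proof is correct and is essentially the standard argument of Ma and Wen \cite{MW08}, which the paper cites rather than reproves: you select, order by order starting from the smallest, maximal pairwise disjoint subfamilies among the balls that miss all earlier selections, and you use that a ball of smaller order meeting a ball of larger order absorbs it once its radius is tripled. One small correction to your closing remark: $X$ is separable and Bowen balls are non-empty open sets, so any pairwise disjoint family of them is automatically countable, and this is exactly what the paper relies on when it extracts a finite or countable disjoint subfamily in the proof of Theorem~\ref{thm 3.2}.
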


	\begin{theorem}[Billingsley-type theorem for Bowen entropy scales]
		\label{thm 3.2}
		Let $(X,T)$ be a TDS and let $E\subset X$ be a Borel set. Fix $\mu\in\mathscr M(X)$. Then  for any $\alpha>0$, the following statements hold:
		\begin{enumerate}
			\item  If $\underline{h}_{\mu,\scl}(T,x) \leq \alpha$ for all  $x \in E$, then $h_{\scl}^{B}(T,E) \leq \alpha$;

			\item If $\mu(E)>0$,  $\underline{h}_{\mu,\scl}(T,x) \geq \alpha$ for all $x \in E$, then  $h_{\scl}^{B}(T,E) \geq \alpha$.
		\end{enumerate}
	\end{theorem}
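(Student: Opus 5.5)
For part (1) I would argue by covering, and for part (2) by a mass distribution argument. Both reduce, through Proposition~\ref{prop 2.4}, to comparing $M_{\scl_\beta}$ with $\mu$ at a single parameter $\beta$.

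\emph{Part (1).} Fix $\beta>\alpha$ and $\epsilon>0$. Since $\underline h_{\mu,\scl}(T,x,\epsilon)$ is non-decreasing as $\epsilon\to0$, the hypothesis gives $\underline h_{\mu,\scl}(T,x,\epsilon)\leq\alpha<\beta$ for every $x\in E$ and every $\epsilon$. By Proposition~\ref{prop 2.12} this yields $\underline h_{\mu,\scl_\beta}(T,x,\epsilon)=0$. Hence, for every $N$ and every $x\in E$, there is some $n_x\geq N$ with
\[
\mu\bigl(B_{n_x}(x,\epsilon)\bigr)\geq\scl_\beta(e^{-n_x}).
\]
Let $\mathcal F$ be the family of these balls. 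Lemma~\ref{lem 3.1} gives a pairwise disjoint subfamily $\mathcal G$ whose $3\epsilon$-enlargements cover $E$. Each ball in $\mathcal G$ has measure at least $\scl_\beta(e^{-n})>0$ and the balls are disjoint, so $\mathcal G$ is countable and
\[
\sum_{\mathcal G}\scl_\beta(e^{-n})\leq\sum_{\mathcal G}\mu(B)\leq1 .
\]
This gives $M_{\scl_\beta}(T,E,3\epsilon,N)\leq1$ for all $N$ and $\epsilon$, so $M_{\scl_\beta}(T,E)\leq1<\infty$. By Proposition~\ref{prop 2.4}, $M_{\scl_{\beta'}}(T,E)=0$ for every $\beta'>\beta$, hence $h^B_{\scl}(T,E)\leq\beta$. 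Letting $\beta\downarrow\alpha$ finishes part (1).

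\emph{Part (2).} Fix $0<\beta<\beta''<\alpha$. For each $x\in E$ there is $\epsilon_x>0$ with $\underline h_{\mu,\scl}(T,x,\epsilon)>\beta''$ for all $\epsilon\leq\epsilon_x$; here I use that the ratio $\scl_\beta(e^{-n})/\mu(B_n(x,\epsilon))$ increases as $\epsilon$ decreases. Proposition~\ref{prop 2.12} then gives $\underline h_{\mu,\scl_\beta}(T,x,\epsilon)=+\infty$. In particular there is $N_x$ with
\[
\mu\bigl(B_n(x,\epsilon)\bigr)\leq\scl_\beta(e^{-n})\qquad\text{for all } n\geq N_x .
\]
Set
\[
E_k=\bigl\{x\in E:\ \mu(B_n(x,1/k))\leq\scl_\beta(e^{-n})\ \text{for all } n\geq k\bigr\}.
\]
Then $E=\bigcup_k E_k$ is an increasing union, so $\mu(E_k)>0$ for some $k$.

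Fix this $k$, put $\epsilon=1/k$, and take any countable cover $\{B_{n_i}(y_i,\epsilon/2)\}$ of $E_k$ with $n_i\geq N\geq k$. Discard the balls missing $E_k$. For each remaining ball pick $x_i\in E_k\cap B_{n_i}(y_i,\epsilon/2)$, so that $B_{n_i}(y_i,\epsilon/2)\subset B_{n_i}(x_i,\epsilon)$. Then
\[
\sum_i\scl_\beta(e^{-n_i})\geq\sum_i\mu\bigl(B_{n_i}(x_i,\epsilon)\bigr)\geq\mu(E_k)>0 .
\]
Hence $M_{\scl_\beta}(T,E,\epsilon/2)\geq\mu(E_k)$. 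By monotonicity in $\epsilon$ and in the set, $M_{\scl_\beta}(T,E)>0$. Proposition~\ref{prop 2.4} gives $M_{\scl_\gamma}(T,E)=\infty$ for $\gamma<\beta$, so $h^B_{\scl}(T,E)\geq\beta$. Letting $\beta\uparrow\alpha$ finishes part (2).

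\emph{Main obstacle.} I expect the only delicate point to be measurability of $E_k$. I would handle it by noting that $x\mapsto\mu(B_n(x,\epsilon))$ is lower semicontinuous for open Bowen balls, so $E_k$ is Borel. Alternatively, one can replace $\mu$ by outer measure, which is still countably subadditive and continuous along increasing unions, and the argument above goes through unchanged. The countability of $\mathcal G$ in part (1), by contrast, is automatic from the positive lower bound on the measure of each ball.
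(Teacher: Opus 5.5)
Your proof is correct and follows essentially the paper's route. In (1) you extract a Ma--Wen disjoint subfamily whose balls each carry $\mu$-mass at least $\mathrm{scl}_\beta(e^{-n})$; in (2) you use a mass-distribution bound on a positive-measure subset with uniform Bowen-ball decay. The differences are small improvements: you drop the unnecessary decomposition $E=\bigcup_m E_m$ in (1), merge the paper's two-step decomposition $E_M$, $E_{N,M}$ into one increasing family $E_k$ in (2), and explicitly justify the countability of $\mathcal G$ and the Borel measurability of $E_k$ (via lower semicontinuity of $x\mapsto\mu(B_n(x,\epsilon))$), both of which the paper leaves implicit.
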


	\begin{proof} We prove the two assertions separately.

		(1) Fix $\beta>\alpha$. Denote by
		\begin{align*}
			E_m=\left\{x\in E:\underline{h}_{\mu,\scl}(T,x,\epsilon)<\beta, \forall \epsilon\in (0,\frac{1}{m}) \right\}.
		\end{align*}
		Then one has $E=\bigcup_{m=1}^{\infty}E_m$. Fix such $m$ and $\epsilon\in (0,\frac{1}{3m})$
		Then for each $x\in E_m$, by the definition of the lower local entropy scale with parameter $\beta$, we have
		\begin{align*}
			\underline{h}_{\mu,\scl_{\beta}}(T,x,\epsilon)=\liminf_{n\to\infty}\frac{\scl_\beta(e^{-n})}{\mu(B_n(x,\epsilon))}=0.
		\end{align*}
		Thus for any $M\in \mathbb N$ and large $N$, there exists a    sequence $\{n_j(x)\}$ with $n_j(x)\geq N$ for each $j\in \mathbb N$ such that
		$\frac{\scl_\beta(e^{-{n_j(x)}})}{\mu(B_{n_j(x)}(x,\epsilon))}<\frac{1}{M}$,
		equivalently,
		\begin{align}\label{equ 3.1}
			\mu(B_{n_j(x)}(x,\epsilon))>M\cdot \scl_\beta(e^{-{n_j(x)}})
		\end{align}
		For any $N \geq 1$, the family $
		\mathcal{G}_{N} = \{ B_{n_j(x)}(x, \epsilon): x \in E_m, \, n_{j}(x) \geq N \}
		$ forms an open cover of $E_m$.
		By Lemma~\ref{lem 3.1}, there exists a finite or countable subfamily
		$\mathcal G^*
		=
		\{B_{n_i}(x_i,\epsilon)\}_{i\in I}
		\subset \mathcal G_N$ of  pairwise disjoint balls 	such that
		\begin{align*}
			E_m
			\subset
			\bigcup_{i\in I}B_{n_i}(x_i,3\epsilon).
		\end{align*}
		Then by \eqref{equ 3.1} one has
		\begin{align*}
			M_{\scl_\beta}(T,E_m,3\epsilon,N)
			\leq
			\sum_{i\in I}\scl_\beta(e^{-n_i})
			\leq
			\frac1M\sum_{i\in I}\mu(B_{n_i}(x_i,\epsilon))
			\leq \frac1M.
		\end{align*}
		Letting $N\to\infty$ and then $\epsilon\to0$ gives
		$M_{\scl_\beta}(T,E_m)\leq1$, hence
		$h_{\scl}^{B}(T,E_m)\leq\beta$. Since $E=\bigcup_mE_m$,
		Proposition~\ref{prop 2.6} yields
		$h_{\scl}^{B}(T,E)=\sup_m h_{\scl}^{B}(T,E_m)\leq\beta$.
		Letting $\beta\to\alpha$ proves the desired inequality.

		(2) Fix $0<\gamma<\alpha$ and define
		\begin{align*}
			E_m
			:=
			\left\{
			x\in E:
			\underline h_{\mu,\scl}
			\left(T,x,\frac1m\right)>\gamma
			\right\}.
		\end{align*}
		Since
		$\underline h_{\mu,\scl}(T,x)
		=
		\lim_{\epsilon\to0}
		\underline h_{\mu,\scl}(T,x,\epsilon)
		\geq\alpha>\gamma$
		for every $x\in E$, one has
		$E=\bigcup_{m=1}^{\infty}E_m.$
		By virtue of $\mu(E)>0$, there exists $M\in\mathbb N$ such that $\mu(E_M)>0.$

		For every $x\in E_M$, Proposition~\ref{prop 2.12} gives
		\begin{align*}
			\underline h_{\mu,\scl_\gamma}
			\left(T,x,\frac1M\right)
			&=
			\liminf_{n\to\infty}
			\frac{\scl_\gamma(e^{-n})}
			{\mu(B_n(x,1/M))}
			=+\infty.
		\end{align*}
		For $N\in\mathbb N$, set
		\begin{align*}
			E_{N,M}
			:=
			\left\{
			x\in E_M:
			\frac{\scl_\gamma(e^{-n})}
			{\mu(B_n(x,1/M))}
			>1
			\text{ for every }n\geq N
			\right\}.
		\end{align*}
		Then $E_M=\bigcup_{N=1}^{\infty}E_{N,M}.$
		Hence there exists $N_0\in\mathbb N$ such that $	\mu(E_{N_0,M})>0.$

		Fix $N\geq N_0$ and $0<\epsilon<1/M$. Let
		$	\left\{
		B_{n_i}\left(y_i,\frac{\epsilon}{2}\right)
		\right\}_i$
		be an arbitrary cover of $E_{N_0,M}$ with $n_i\geq N$.
		After discarding any ball that does not meet $E_{N_0,M}$,
		choose $x_i
		\in
		E_{N_0,M}
		\cap
		B_{n_i}\left(y_i,\frac{\epsilon}{2}\right).$
		By the triangle inequality,
		\begin{align*}
			B_{n_i}\left(y_i,\frac{\epsilon}{2}\right)
			\subset
			B_{n_i}(x_i,\epsilon)
			\subset
			B_{n_i}\left(x_i,\frac1M\right).
		\end{align*}
		Since $x_i\in E_{N_0,M}$ and $n_i\geq N\geq N_0$, $\mu\left(B_{n_i}\left(x_i,\frac1M\right)\right)
		<
		\scl_\gamma(e^{-n_i}).$
		Consequently,
		\begin{align*}
			0<\mu(E_{N_0,M})
			\leq
			\sum_i
			\mu\left(
			B_{n_i}\left(y_i,\frac{\epsilon}{2}\right)
			\right)
			\leq
			\sum_i\mu(B_{n_i}(x_i,\epsilon))
			\leq
			\sum_i\scl_\gamma(e^{-n_i}).
		\end{align*}
		Taking the infimum over all such covers yields
		\begin{align*}
			M_{\scl_\gamma}
			\left(
			T,E_{N_0,M},\frac{\epsilon}{2},N
			\right)
			\geq
			\mu(E_{N_0,M})>0.
		\end{align*}
		Letting $N\to\infty$ and then $\epsilon\to0$, we obtain
		\begin{align*}
			M_{\scl_\gamma}(T,E_{N_0,M})
			\geq
			\mu(E_{N_0,M})>0.
		\end{align*}
		Since $E_{N_0,M}\subset E$, monotonicity gives $M_{\scl_\gamma}(T,E)>0.$
		Therefore, $h_{\scl}^{B}(T,E)\geq\gamma.$
		Since $0<\gamma<\alpha$ is arbitrary, it follows that
		\begin{align*}
			h_{\scl}^{B}(T,E)\geq\alpha.
		\end{align*}
	\end{proof}

	\subsection{Weighted entropy scales and Frostman construction}
	The Billingsley-type theorem established above provides one
	direction of the variational principle for Bowen entropy scales. To obtain the
	converse inequality, we introduce a weighted version of the Bowen entropy
	scale. This weighted construction plays the role of a Frostman-type
	construction and allows us to associate probability measures with positive
	weighted entropy contents.

	Let $\scl=\{\scl_\alpha\}_{\alpha\geq0}$ be a scaling family. For a  bounded function $g:X\to\mathbb R$,  $N\in\mathbb N$ and
	$\epsilon>0$, define
	\begin{align*}
		W_{\scl_\alpha}(T,g,\epsilon,N)
		=
		\inf
		\sum_i c_i\scl_\alpha(e^{-n_i}),
	\end{align*}
	where the infimum is taken over all finite or countable families
	$\{(B_{n_i}(x_i,\epsilon),c_i)\}_{i\in I}$ satisfying
	$c_i>0$, $n_i\geq N$, and $
	\sum_i c_i\chi_{B_{n_i}(x_i,\epsilon)}\geq g .$

	For a non-empty subset $K\subset X$, we write
	\begin{align*}
		W_{\scl_\alpha}(T,K,\epsilon,N)
		=
		W_{\scl_\alpha}(T,\chi_K,\epsilon,N).
	\end{align*}
	Since $W_{\scl_\alpha}(T,K,\epsilon,N)$ is nondecreasing with respect to
	$N$ and nondecreasing as $\epsilon\to0$, we define
	\begin{align*}
		W_{\scl_\alpha}(T,K,\epsilon)
		=
		\lim_{N\to\infty}
		W_{\scl_\alpha}(T,K,\epsilon,N), \quad
		W_{\scl_\alpha}(T,K)
		=
		\lim_{\epsilon\to0}
		W_{\scl_\alpha}(T,K,\epsilon).
	\end{align*}

	By an argument analogous to Proposition~\ref{prop 2.4}, there exists a
	critical value of $\alpha$ at which
	$W_{\scl_\alpha}(T,K)$ jumps from $+\infty$ to $0$.

	\begin{definition}\label{def 3.3}
		The \emph{weighted Bowen entropy scale} of $K$ is defined by
		\begin{align*}
			h_{\scl}^{WB}(T,K)
			=
			\sup\{\alpha>0:
			W_{\scl_\alpha}(T,K)=\infty\}
			=
			\inf\{\alpha>0:
			W_{\scl_\alpha}(T,K)=0\}.
		\end{align*}
	\end{definition}

	The following lemma is the key measure construction used in the proof of
	the variational principle.

	\begin{lemma}\label{lem 3.4}
		Let $K\subset X$ be a nonempty compact set. Let $\alpha\ge 0$, $N\in\mathbb N$, and $\epsilon>0$. Suppose that
		\begin{equation*}
			W_{\scl_\alpha}(T,K,\epsilon,N)>0
		\end{equation*}
		for some $N\in\mathbb N$. Then there exists $\mu\in\mathscr M(X)$ such that $\mu(K)=1$ and
		\begin{equation*}
			\mu(B_n(x,\epsilon))\le \frac{\scl_\alpha(e^{-n})}{W_{\scl_\alpha}(T,K,\epsilon,N)}
		\end{equation*}
		for every $x\in X$ and every $n\ge N$.
	\end{lemma}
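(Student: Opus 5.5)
This is the analogue of the Frostman lemma in Feng--Huang \cite[Lemma~3.4]{FH12}. We plan to follow their Hahn--Banach argument, with $e^{-n\alpha}$ replaced by $\scl_\alpha(e^{-n})$. Write $c:=W_{\scl_\alpha}(T,K,\epsilon,N)>0$. Note that $c<\infty$, since $K$ is compact and is covered by finitely many balls $B_N(x,\epsilon)$ with weight $1$.

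On the Banach space $C(X)$ define
\[
p(f):=\frac{1}{c}\,W_{\scl_\alpha}(T,\chi_K\cdot f,\epsilon,N),
\]
where the weighted quantity of a bounded function is defined as in the paper. For functions that are negative somewhere, we use the usual convention that the infimum runs over families with $\sum_i c_i\chi_{B_{n_i}(x_i,\epsilon)}\geq g$; this gives $p(f)\ge 0$ automatically.

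\textbf{Properties of $p$.} We will check the following.
\begin{enumerate}
\item $p$ is subadditive. Concatenating admissible families for $f$ and $g$ gives an admissible family for $f+g$.
\item $p(tf)=tp(f)$ for $t\geq0$.
\item $p(1)=1$.
\item $p(f)\le \|f\|_\infty p(1)$.
\item $p(f)=0$ whenever $f\le 0$ on $K$.
\end{enumerate}
By Hahn--Banach there is a linear functional $L$ on $C(X)$ with $L(1)=p(1)=1$ and $-p(-f)\le L(f)\le p(f)$.

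\textbf{Obtaining the measure.} If $f\ge 0$, then $p(-f)=0$, so $L(f)\ge 0$; hence $L$ is positive. The Riesz representation theorem gives $\mu\in\mathscr M(X)$ with $\mu(X)=1$. Next take a compact set $F$ disjoint from $K$ and a continuous $f$ with $0\le f\le 1$, $f=1$ on $F$ and $f=0$ on $K$. Then $\mu(F)\le L(f)\le p(f)=0$. By inner regularity $\mu(X\setminus K)=0$, so $\mu(K)=1$.

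\textbf{The ball estimate.} Fix $x\in X$ and $n\ge N$, and let $F\subset B_n(x,\epsilon)$ be compact. Choose a continuous $f$ with $0\le f\le1$, $f=1$ on $F$, and $\operatorname{supp} f\subset B_n(x,\epsilon)$; this is possible because the Bowen ball is open. The single pair $(B_n(x,\epsilon),1)$ dominates $\chi_K f$, so
\[
\mu(F)\le L(f)\le p(f)\le \frac{\scl_\alpha(e^{-n})}{c}.
\]
Inner regularity of $\mu$ on the open set $B_n(x,\epsilon)$ then gives the claimed bound.

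\textbf{Main obstacle.} The delicate point is verifying $p(1)=1$ and the positivity/support properties when $p$ is defined on all of $C(X)$, including functions with negative values. The cleanest way is to define $p(f)$ using only $\chi_K f$ and the covering inequality restricted to points where $f>0$. The upper bound $L(1)\le p(1)=1$ together with $L(1)\ge -p(-1)=0$ only gives $L(1)\in[0,1]$, so instead we will apply Hahn--Banach with $L$ defined first on constants by $L(t)=tp(1)$, which is dominated by $p$. One has to make sure this domination holds for negative constants, which follows because $p(-t)=0\ge -t\,p(1)$. Nothing here depends on the specific scaling, since $\scl_\alpha(e^{-n_i})$ enters only as fixed positive weights.
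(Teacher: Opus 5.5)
Your proposal is correct and is essentially the paper's own proof, which follows Feng--Huang. Both use the sublinear functional $p(f)=W_{\scl_\alpha}(T,\chi_K f,\epsilon,N)/c$, extend $t\mapsto t\,p(1)$ from the constants by Hahn--Banach, get a measure from positivity and the Riesz representation theorem, and apply Urysohn plus inner regularity both for $\mu(K)=1$ and for the Bowen-ball bound via $\chi_K f\le\chi_{B_n(x,\epsilon)}$. The concern in your final paragraph is already handled by that extension from the constants together with $p(g)=0$ for $g\le 0$, which is exactly how the paper does it.
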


	\begin{proof}
		Set $c=W_{\scl_\alpha}(T,K,\epsilon,N).$
		Since $K$ is compact, one has $c<\infty$. Define a functional
		$p:C(X,\mathbb R)\to\mathbb R$ by
		\begin{align*}
			p(f)=\frac{W_{\scl_\alpha}(T,\chi_K f,\epsilon,N)}{c}.
		\end{align*}
		It follows directly from the definition of
		$W_{\scl_\alpha}$ that
		\begin{enumerate}
			\item $p(f+g)\le p(f)+p(g)$ for all $f,g\in C(X,\mathbb R)$;
			\item $p(tg)=t\,p(g)$ for all $t\ge 0$ and $g\in C(X,\mathbb R)$;
			\item $p(\chi)=1$, where $\chi$ denotes the constant function equal to $1$;
			\item $0\le p(g)\le \|g\|_\infty$ for every $g\in C(X,\mathbb R)$, and $p(f)=0$ whenever $f\le 0$.
		\end{enumerate}

		By the Hahn--Banach theorem, the functional defined on the constant
		functions can be extended to a linear functional
		$\Psi:C(X,\mathbb R)\to\mathbb R$ satisfying
		\begin{align*}
			-p(-f)\leq \Psi(f)\leq p(f)
		\end{align*}
		for every $f\in C(X,\mathbb R)$. In particular, $\Psi$ is positive.
		Hence, by the Riesz representation theorem, there exists a Borel probability
		measure $\mu\in\mathscr M(X)$ such that
		\begin{align*}
			\Psi(f)=\int_X f\,d\mu
		\end{align*}
		for every $f\in C(X,\mathbb R)$.

		We first prove that $\mu(K)=1$. Let $E\subset X\setminus K$ be compact.
		By the Urysohn lemma, there exists $f\in C(X,\mathbb R)$ such that $0\leq f\leq1,\quad
		f|_K=0,\quad
		f|_E=1 $.
		Then $p(f)=0$, and therefore
		\begin{align*}
			\mu(E)\leq \int_X f\,d\mu =\Psi(f)\leq
			p(f)=0.
		\end{align*}
		By the inner regularity of $\mu$, we obtain
		$\mu(X\setminus K)=0$, and hence
		\begin{align*}
			\mu(K)=1 .
		\end{align*}

		It remains to prove the estimate on Bowen balls. Fix $x\in X$ and
		$n\geq N$. Let $E\subset B_n(x,\epsilon)$ be compact. By the Urysohn lemma,
		there exists $g\in C(X,\mathbb R)$ such that
		\begin{align*}
			0\leq g\leq1,\qquad
			g|_E=1,\qquad
			g|_{X\setminus B_n(x,\epsilon)}=0 .
		\end{align*}
		Then $\mu(E)
		\leq
		\Psi(g)
		\leq
		p(g).$
		Since $\chi_K g\leq \chi_{B_n(x,\epsilon)}$, the definition of
		$W_{\scl_\alpha}$ gives
		\begin{align*}
			W_{\scl_\alpha}(T,\chi_K g,\epsilon,N)
			\leq
			\scl_\alpha(e^{-n}).
		\end{align*}
		Consequently,
		\begin{align*}
			\mu(E)
			\leq
			p(g)
			\leq
			\frac{\scl_\alpha(e^{-n})}
			{W_{\scl_\alpha}(T,K,\epsilon,N)} .
		\end{align*}
		Taking the supremum over all compact subsets
		$E\subset B_n(x,\epsilon)$ and using the inner regularity of $\mu$ yields
		\begin{align*}
			\mu(B_n(x,\epsilon))
			\leq
			\frac{\scl_\alpha(e^{-n})}
			{W_{\scl_\alpha}(T,K,\epsilon,N)} .
		\end{align*}
		This completes the proof.
	\end{proof}

	\subsection{Variational principle on compact subsets}
	The weighted entropy scale introduced above is designed to facilitate the
	Frostman construction. The following proposition shows that, under a
	controlled decay condition on the scaling family, this weighted construction
	does not change the resulting Bowen entropy scale. We first recall a covering lemma that will be used in the proof.

	\begin{lemma}\cite{Mat95,Wang21}\label{lem 3.5}
		Let $(X,d)$ be a compact metric space and $\mathcal B=\{B(x_i,r)\}_{i\in I}$  be a family of closed (or open) balls in $X$. Denote by $I(i)=\{j\in I: B(x_i,r)\cap B_j(x_j,r)\neq \emptyset\}$. Then there exists a finite index set $J\subset I$ such that $\{I(i)\}_{i\in J}$  are  pairwise disjoint and
		\begin{align*}
			\bigcup_{i\in I}B(x_i,r)\subset \bigcup_{i\in J}B(x_i,5r).
		\end{align*}
	\end{lemma}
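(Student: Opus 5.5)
The proof is a greedy selection of a maximal family, with finiteness coming from compactness, followed by a triangle-inequality estimate for the covering. Note first that $i\in I(i)$ for every $i\in I$, since the ball $B(x_i,r)$ meets itself.

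\emph{Selection and finiteness.} Call $J'\subset I$ \emph{admissible} if the sets $\{I(i)\}_{i\in J'}$ are pairwise disjoint. Let $i\neq j$ lie in an admissible $J'$. If $B(x_i,r)\cap B(x_j,r)\neq\emptyset$, then $j\in I(i)\cap I(j)$, which is impossible. Hence the balls indexed by an admissible family are pairwise disjoint, and in particular $x_i\notin B(x_j,r)$. This gives $d(x_i,x_j)\geq r$ for open balls and $d(x_i,x_j)>r$ for closed balls. So the centers of an admissible family form an $r$-separated set. Cover $X$ by finitely many open balls of radius $r/2$, say $L$ of them (possible by compactness). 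Each such ball contains at most one separated center, so every admissible family has at most $L$ elements. Choose an admissible $J$ of maximal cardinality; it is finite and maximal with respect to inclusion. (If $I=\emptyset$ there is nothing to prove.)

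\emph{Covering.} Fix $k\in I$. If $k\in J$, then $B(x_k,r)\subset B(x_k,5r)$. Otherwise $J\cup\{k\}$ is not admissible, by maximality of $J$. So there are $i\in J$ and $j\in I(k)\cap I(i)$, meaning $B(x_j,r)$ meets both $B(x_k,r)$ and $B(x_i,r)$. Taking points in these intersections gives $d(x_k,x_j)<2r$ and $d(x_j,x_i)<2r$, with $\leq$ in place of $<$ for closed balls. For any $y\in B(x_k,r)$,
\begin{align*}
d(y,x_i)\leq d(y,x_k)+d(x_k,x_j)+d(x_j,x_i)< r+2r+2r=5r,
\end{align*}
again with $\leq$ for closed balls. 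Hence $B(x_k,r)\subset B(x_i,5r)$, where $B(x_i,5r)$ is of the same type (open or closed) as the original balls. Taking the union over $k\in I$ gives the claimed inclusion.

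\emph{Main point.} The only delicate step is the finiteness of $J$. It relies on the observation that disjointness of the index sets $I(i)$ forces disjointness of the balls themselves, because each $I(i)$ contains $i$. That makes the chosen centers uniformly $r$-separated, and total boundedness of $X$ then bounds the size of $J$. Choosing $J$ of maximal cardinality avoids any appeal to Zorn's lemma.
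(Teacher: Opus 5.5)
Your proof is correct. Since $i\in I(i)$, disjointness of the index sets forces the selected balls to be disjoint, so their centers are $r$-separated and compactness bounds the size of any admissible family; maximality then gives a two-step chain $B(x_k,r)\cap B(x_j,r)\neq\emptyset\neq B(x_j,r)\cap B(x_i,r)$, which yields the radius $5r$. The paper gives no proof of its own and only cites Mattila and Wang; your greedy maximal-selection argument is the standard Vitali-type proof behind that citation, and it delivers exactly the disjointness of the $I(i)$ used in the weight count of Proposition~\ref{prop 3.7}.
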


	The weighted entropy scale introduced above is constructed by allowing
	arbitrary weights on Bowen covers. Therefore, one immediately obtains
	\begin{align*}
		h_{\scl}^{WB}(T,Z)\leq h_{\scl}^{B}(T,Z).
	\end{align*}
	To obtain the reverse inequality, we need to compare weighted covers with
	ordinary Bowen covers. The main difficulty is that the lengths of Bowen balls
	in a weighted cover may vary significantly. Following the idea of
	Feng and Huang \cite{FH12}, we group the Bowen balls according to a suitable
	subsequence of scales. This requires a mild decay condition on the scaling
	family, which guarantees that the contribution from each group can be
	controlled.
	\begin{condition1}[Controlled decay condition]
		We say that a scaling family
		$\scl=\{\scl_\alpha\}_{\alpha\geq0}$ satisfies the controlled decay
		condition if, for every $\alpha\geq0$ and every sufficiently small
		$\delta>0$, there exist a function
		$f_\delta:\mathbb N\to\mathbb R_+$, a strictly increasing sequence of
		positive integers $\{n_k\}_{k=1}^{\infty}$, and a constant
		$C_{\alpha,\delta}\geq1$ such that
		\begin{align*}
			\lim_{\delta\to0}\limsup_{k\to\infty}
			\frac{f_\delta(n_k)\scl_{\alpha+\delta}(e^{-n_k})	}
			{\scl_\alpha(e^{-n_k})}
			<\infty,
		\end{align*}
		and
		\begin{align*}
			\sum_{k=1}^{\infty}\frac1{f_\delta(n_k)}<\infty, \quad
			\frac{\scl_\alpha(e^{-n_k})}
			{\scl_\alpha(e^{-n_{k+1}})}
			\leq C_{\alpha,\delta}.
		\end{align*}
	\end{condition1}

	\begin{remark}\label{rem 3.6}
		The first two conditions in the controlled decay condition guarantee that the
		relative decay between different parameters of the scaling family can be
		controlled by a summable sequence. The third condition controls the variation
		of the scaling values between two consecutive elements of the selected
		subsequence.

		Many natural scaling families satisfy this condition. In particular, power
		scalings and logarithmic scalings satisfy the controlled decay condition.
	\end{remark}

	Under this controlled decay condition, the weighted construction and the
	original Bowen construction yield the same entropy scale.
	\begin{proposition}\label{prop 3.7}
		Let $Z\subset X$. If the scaling family
		$\scl=\{\scl_\alpha\}_{\alpha\geq0}$
		satisfies the controlled decay condition, then
		\begin{align*}
			h_{\scl}^{WB}(T,Z)=h_{\scl}^{B}(T,Z).
		\end{align*}
	\end{proposition}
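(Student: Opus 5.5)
The inequality $h_{\scl}^{WB}(T,Z)\leq h_{\scl}^{B}(T,Z)$ is immediate: an ordinary Bowen cover is a weighted cover with all $c_i=1$, so $W_{\scl_\alpha}(T,Z,\epsilon,N)\le M_{\scl_\alpha}(T,Z,\epsilon,N)$. For the reverse inequality we follow Feng--Huang \cite{FH12}. Fix $\alpha$ with $W_{\scl_{\alpha}}(T,Z)=0$ and a small $\delta>0$. The aim is to show $M_{\scl_{\alpha+\delta'}}(T,Z)$ is finite, or zero, for a slightly larger parameter $\alpha+\delta'$. Letting $\delta'\to0$ then gives $h^B_{\scl}\le h^{WB}_{\scl}$.

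Take a weighted family $\{(B_{n_i}(x_i,\epsilon),c_i)\}$ with $n_i\ge N$, $\sum c_i\chi_{B_{n_i}(x_i,\epsilon)}\ge\chi_Z$, and total weight $\sum c_i\scl_\alpha(e^{-n_i})<\eta$. Two reductions come first. We may assume the $c_i$ are uniformly bounded and the family is finite on a compact piece. For general $Z$ we instead truncate, replacing $Z$ by the set $Z_t$ of points $x$ for which the partial sums over the first $t$ balls already reach $1-\theta$. We then group the balls by the subsequence $\{n_k\}$ of Condition~1: the group $I_k$ consists of those $i$ with $n_k\le n_i<n_{k+1}$. Part (3) of the condition gives $\scl_\alpha(e^{-n_i})\ge C_{\alpha,\delta}^{-1}\scl_\alpha(e^{-n_k})$ on $I_k$. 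Since $\sum_k 1/f_\delta(n_k)<\infty$, we normalise weights $w_k=1/(S f_\delta(n_k))$ with $\sum_k w_k\le 1$. The point $x\in Z$ then lies in
\[
Z_k=\Bigl\{x:\sum_{i\in I_k}c_i\chi_{B_{n_i}(x_i,\epsilon)}(x)\ge w_k\Bigr\}
\]
for some $k$, up to a harmless factor.

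On each $Z_k$ we use the Vitali-type Lemma~\ref{lem 3.5}, in the Bowen-metric form with a fixed length $n_k$ and radius enlarged to $5\epsilon$ (the balls of $I_k$ contain Bowen balls of length $n_{k+1}$, which we compare at length $n_k$). This gives the standard Federer-type estimate. We obtain a subfamily $J_k$ with pairwise disjoint neighbourhoods $I(i)$, such that $Z_k$ is covered by the enlarged balls indexed by $J_k$ and
\[
\#J_k\, w_k\le\sum_{i\in I_k}c_i .
\]
Multiplying by $\scl_{\alpha+\delta}(e^{-n_k})$ gives
\[
\#J_k\,\scl_{\alpha+\delta}(e^{-n_k})\le S\,\frac{f_\delta(n_k)\scl_{\alpha+\delta}(e^{-n_k})}{\scl_\alpha(e^{-n_k})}\,C_{\alpha,\delta}\sum_{i\in I_k}c_i\scl_\alpha(e^{-n_i}).
\]
By part (1) of Condition~1, the middle ratio is bounded by a constant $A$ for large $k$ and small $\delta$. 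Summing over $k$ bounds $M_{\scl_{\alpha+\delta}}(T,Z,5\epsilon,N)$ by $SAC_{\alpha,\delta}\,\eta$. Letting $\eta\to0$, then $N\to\infty$ and $\epsilon\to0$, gives $M_{\scl_{\alpha+\delta}}(T,Z)=0$, hence $h^B_{\scl}(T,Z)\le\alpha+\delta$.

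The main obstacle is the grouping step. We must check that the Vitali counting argument works when balls in the same group have different lengths $n_i\in[n_k,n_{k+1})$. We also need the constant $C_{\alpha,\delta}$ not to blow up as $\delta\to0$, or else we must use it only at a fixed $\delta$. To handle the differing lengths, we first try to replace every ball in $I_k$ by the larger ball of length $n_k$ and the same centre. That keeps the covering property and costs only the factor $C_{\alpha,\delta}$ from part (3).
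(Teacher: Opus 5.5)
Your proposal is correct and follows essentially the same route as the paper. It groups indices by $n_k\le n_i<n_{k+1}$, enlarges each ball to $B_{n_k}(x_i,\epsilon)$, and splits $Z$ into sets $Z_k$ where the $k$th group's weight exceeds a threshold proportional to $1/f_\delta(n_k)$. It then counts via Lemma~\ref{lem 3.5} using the pairwise disjoint neighbourhoods $I(i)$, converts with parts (1) and (3) of the controlled decay condition, and sums over $k$. Your preliminary reductions (bounded weights, truncation to $Z_t$) are unnecessary, because Lemma~\ref{lem 3.5} applies directly to the possibly infinite family of equal-radius balls in the compact space $(X,d_{n_k})$ and gives a finite $J_k$. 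Your concern about $C_{\alpha,\delta}$ is also moot: $\delta$ stays fixed, and $W_{\scl_\alpha}(T,Z)=0$ forces $M_{\scl_{\alpha+\delta}}(T,Z)=0$ regardless of the constant.
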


	\begin{proof}
		The inequality
		$h_{\scl}^{WB}(T,Z)\leq h_{\scl}^{B}(T,Z)$
		follows immediately by taking $c_i=1$. We prove the converse inequality.

		Fix $\alpha\geq0$ and $\delta>0$ sufficiently small. By the controlled decay
		condition, there exist a function $f_\delta$, an increasing sequence
		$\{n_k\}_{k\geq1}$, and a constant $C_{\alpha,\delta}\geq1$ such that
		\begin{align*}
			\limsup_{k\to\infty}
			\frac{
				f_\delta(n_k)\scl_{\alpha+\delta}(e^{-n_k})
			}
			{\scl_\alpha(e^{-n_k})}
			< A
		\end{align*}
		for some constant $A>0$, and
		\begin{align*}
			\sum_{k=1}^{\infty}\frac1{f_\delta(n_k)}<\infty, \quad \frac{\scl_\alpha(e^{-n_k})}
			{\scl_\alpha(e^{-n_{k+1}})}
			\leq C_{\alpha,\delta}.
		\end{align*}
		Hence, for sufficiently large $K$, whenever
		$n_k\leq n<n_{k+1}$ and $k\geq K$, we have
		\begin{align*}
			\scl_{\alpha+\delta}(e^{-n_k})
			\leq
			\frac{A}{f_\delta(n_k)}
			\scl_\alpha(e^{-n_k}),\quad 	\sum_{k\geq K}\frac1{f_\delta(n_k)}
			<
			\frac1A .
		\end{align*}

		Now let
		$\{(B_{n_i}(x_i,\epsilon),c_i)\}_{i\in I}$
		be an arbitrary weighted cover of $Z$, where $n_i\geq n_K$. For each
		$k\geq K$, define
		\begin{align*}
			I_k
			=
			\{i\in I:n_k\leq n_i<n_{k+1}\}.
		\end{align*}
		For $i\in I_k$, set
		$\widehat B_i=B_{n_k}(x_i,\epsilon).$
		Since
		$	B_{n_i}(x_i,\epsilon)\subset
		B_{n_k}(x_i,\epsilon),$
		we still have
		\begin{align*}
			\sum_{k\geq K}\sum_{i\in I_k}
			c_i\chi_{\widehat B_i}
			\geq
			\chi_Z .
		\end{align*}

		For $t\in(0,1)$, define
		$\tau_k:=\frac{At}{f_\delta(n_k)}$
		and
		$
		Z_k
		=
		\left\{
		x\in Z:
		\sum_{i\in I_k}c_i\chi_{\widehat B_i}(x)
		>
		\tau_k
		\right\}.$
		We claim that
		$Z=\bigcup_{k\geq K}Z_k.$
		Indeed, otherwise there exists $x\in Z$ such that
		\begin{align*}
			\sum_{i\in I_k}c_i\chi_{\widehat B_i}(x)
			\leq
			\frac{At}{f_\delta(n_k)}
		\end{align*}
		for every $k\geq K$. Hence,
		\begin{align*}
			\sum_{k\geq K}
			\sum_{i\in I_k}
			c_i\chi_{\widehat B_i}(x)
			\leq
			At
			\sum_{k\geq K}
			\frac1{f_\delta(n_k)}
			<t,
		\end{align*}
		which contradicts the covering property.

		Fix $k\geq K$. Let
		\begin{align*}
			I_k'
			=
			\{i\in I_k:\widehat B_i\cap Z_k\neq\emptyset\}.
		\end{align*}
		Applying Lemma~\ref{lem 3.5} to
		$\{\widehat B_i:i\in I_k'\}$, we obtain a finite subset
		$J_k\subset I_k'$ such that
		\begin{align*}
			Z_k
			\subset
			\bigcup_{i\in J_k}
			B_{n_k}(x_i,5\epsilon).
		\end{align*}
		For each $i\in J_k$, choose
		$y_i\in \widehat B_i\cap Z_k.$
		Since $y_i\in Z_k$,
		\begin{align*}
			\sum_{j\in I_k'}c_j\chi_{\widehat B_j}(y_i)
			>
			\tau_k .
		\end{align*}
		Moreover, if $y_i\in\widehat B_j$, then
		$\widehat B_i\cap\widehat B_j\neq\emptyset$. Hence the indices contributing
		to the above sum belong to the corresponding covering family
		$I_k'(i)$.
		Because the families $I_k'(i)$ are pairwise disjoint, we obtain
		\begin{align*}
			|J_k|\tau_k
			<
			\sum_{i\in J_k}
			\sum_{j\in I_k'(i)}c_j
			\leq
			\sum_{j\in I_k}c_j .
		\end{align*}
		Therefore,
		\begin{align*}
			|J_k|
			\leq
			\frac1{\tau_k}
			\sum_{j\in I_k}c_j .
		\end{align*}
		Consequently,
		\begin{align*}
			M_{\scl_{\alpha+\delta}}
			(T,Z_k,5\epsilon,n_K)
			&\leq
			|J_k|
			\scl_{\alpha+\delta}(e^{-n_k})
			\\
			&\leq
			\frac{\scl_{\alpha+\delta}(e^{-n_k})}{\tau_k}
			\sum_{j\in I_k}c_j
			\leq
			\frac1t
			\scl_\alpha(e^{-n_k})
			\sum_{j\in I_k}c_j .
		\end{align*}
		By the third condition of the controlled decay condition, for
		$n_j\in[n_k,n_{k+1})$,
		\begin{align*}
			\scl_\alpha(e^{-n_k})
			\leq
			C_{\alpha,\delta}
			\scl_\alpha(e^{-n_j}).
		\end{align*}
		Hence,
		\begin{align*}
			M_{\scl_{\alpha+\delta}}
			(T,Z_k,5\epsilon,n_K)
			\leq
			\frac{C_{\alpha,\delta}}{t}
			\sum_{j\in I_k}
			c_j\scl_\alpha(e^{-n_j}).
		\end{align*}
		Using the countable stability of $M_{\scl_{\alpha+\delta}}$, we obtain
		\begin{align*}
			M_{\scl_{\alpha+\delta}}
			(T,Z,5\epsilon,n_K)
			\leq
			\sum_{k\geq K}
			M_{\scl_{\alpha+\delta}}
			(T,Z_k,5\epsilon,n_K)
			\leq
			\frac{C_{\alpha,\delta}}{t}
			\sum_{i\in I}
			c_i\scl_\alpha(e^{-n_i}).
		\end{align*}
		Taking the infimum over all weighted covers gives
		\begin{align*}
			M_{\scl_{\alpha+\delta}}
			(T,Z,5\epsilon,n_K)
			\leq
			\frac{C_{\alpha,\delta}}{t}
			W_{\scl_\alpha}(T,Z,\epsilon,n_K).
		\end{align*}
		Letting $t\to1$, $K\to\infty$, and $\epsilon\to0$, we obtain
		\begin{align*}
			M_{\scl_{\alpha+\delta}}(T,Z)
			\leq
			C_{\alpha,\delta}
			W_{\scl_\alpha}(T,Z).
		\end{align*}

		If $\alpha>h_{\scl}^{WB}(T,Z)$, then
		$W_\alpha(T,Z)=0$, and hence
		$M_{\alpha+\delta}(T,Z)=0$. Therefore,
		\[
		h_{\scl}^{B}(T,Z)\leq\alpha+\delta.
		\]
		Letting first $\delta\to0$ and then
		$\alpha\to h_{\scl}^{WB}(T,Z)$ gives the desired inequality.
		Combining this with the opposite inequality proves
		\begin{align*}
			h_{\scl}^{WB}(T,Z)
			=
			h_{\scl}^{B}(T,Z).
		\end{align*}
	\end{proof}

	We now prove the variational principle stated in Theorem~\ref{thm 1.1}.

	\begin{proof}[Proof of Theorem~\ref{thm 1.1}]
		We first prove the lower bound. Let $\mu\in\mathscr M(X)$ satisfy $\mu(K)=1$. If
		$\underline{h}_{\mu,\scl}(T)=0$, the conclusion is immediate. Assume that
		$\underline{h}_{\mu,\scl}(T)>0.$
		Fix
		$	0<\delta<\underline{h}_{\mu,\scl}(T).$
		Define
		\begin{align*}
			K_0
			=
			\left\{
			x\in K:
			\underline{h}_{\mu,\scl}(T,x)
			\geq
			\underline{h}_{\mu,\scl}(T)-\delta
			\right\}.
		\end{align*}
		Then $\mu(K_0)>0$. Otherwise,
		$\underline{h}_{\mu,\scl}(T,x)
		<
		\underline{h}_{\mu,\scl}(T)-\delta
		$
		for $\mu$-almost every $x\in K$, which contradicts the definition of
		$\underline{h}_{\mu,\scl}(T)$ as the integral of the pointwise local entropy
		scale.

		Applying Theorem~\ref{thm 3.2},
		we obtain
		\begin{align*}
			h_{\scl}^{B}(T,K)
			\geq
			h_{\scl}^{B}(T,K_0)\geq
			\underline{h}_{\mu,\scl}(T)-\delta .
		\end{align*}
		Letting $\delta\to0$, we get
		\begin{align*}
			h_{\scl}^{B}(T,K)
			\geq
			\underline{h}_{\mu,\scl}(T).
		\end{align*}
		Taking the supremum over all $\mu\in\mathscr M(X)$ with $\mu(K)=1$ gives
		\begin{align*}
			h_{\scl}^{B}(T,K)
			\geq
			\sup
			\left\{
			\underline{h}_{\mu,\scl}(T):
			\mu\in\mathscr M(X),\mu(K)=1
			\right\}.
		\end{align*}
		The case $\underline{h}_{\mu,\scl}(T)=+\infty$ follows by replacing
		$\underline{h}_{\mu,\scl}(T)-\delta$ with an arbitrary finite positive
		number.

		We next prove the reverse inequality. Assume that
		$h_{\scl}^{B}(T,K)>0$. By Proposition~\ref{prop 3.7}, it is enough to work
		with the weighted entropy scale.
		Take
		\begin{align*}
			0<\alpha<h_{\scl}^{WB}(T,K).
		\end{align*}
		Then there exist $\epsilon>0$ and $N\in\mathbb N$ such that
		$	W_{\scl_\alpha}(T,K,\epsilon,N)>0 .$
		By Lemma~\ref{lem 3.4}, there exists a probability measure
		$\mu\in\mathscr M(X)$ satisfying $\mu(K)=1$ and
		\begin{align*}
			\mu(B_n(x,\epsilon))
			\leq
			\frac{\scl_\alpha(e^{-n})}
			{W_{\scl_\alpha}(T,K,\epsilon,N)}
		\end{align*}
		for every $x\in X$ and $n\geq N$.
		Therefore,
		\begin{align*}
			\underline{h}_{\mu,\scl_\alpha}(T,x,\epsilon)
			=
			\liminf_{n\to\infty}
			\frac{\scl_\alpha(e^{-n})}
			{\mu(B_n(x,\epsilon))}\geq
			W_{\scl_\alpha}(T,K,\epsilon,N)>0 .
		\end{align*}
		By the definition of the local entropy scale, this implies
		\begin{align*}
			\underline{h}_{\mu,\scl}(T,x)\geq\alpha .
		\end{align*}
		Letting $\alpha\uparrow h_{\scl}^{WB}(T,K)$ and using
		Proposition~\ref{prop 3.7}, we obtain
		\begin{align*}
			h_{\scl}^{B}(T,K)=h_{\scl}^{WB}(T,K)
			\leq
			\sup
			\left\{
			\underline{h}_{\mu,\scl}(T):
			\mu\in\mathscr M(X),\mu(K)=1
			\right\}.
		\end{align*}
		This completes the proof.
	\end{proof}

	\section{Factor inequalities for entropy scales}\label{sec 4}
	In this section, we establish the two factor inequalities in
	Theorem~\ref{thm 1.2}: first for upper capacity entropy scales on compact sets and
	then for Bowen entropy scales on arbitrary subsets. We also compute the
	transfer function for typical scaling families and discuss the sharpness of
	the resulting bounds.

	\subsection{Factor inequalities for upper capacity entropy scales}
	\label{subsec 4.1}

	Let $(X,d_X,T)$ and $(Y,d_Y,S)$ be topological dynamical systems. A
	continuous surjection $\pi:X\longrightarrow Y$
	is called a \emph{factor map} if
	$\pi\circ T=S\circ\pi$. In this case, $(Y,S)$ is a factor of $(X,T)$.

	For a non-empty subset $A\subset X$, write
	\begin{align*}
		h_{\mathrm{top}}^{U}(T,A)
		:=
		\lim_{\epsilon\to0}
		\limsup_{n\to\infty}
		\frac{1}{n}\log r_n(A,\epsilon)
	\end{align*}
	for its classical upper capacity entropy. Given a factor map $\pi$, set
	\begin{align*}
		a_\pi
		:=
		\sup_{y\in Y}
		h_{\mathrm{top}}^{U}
		\bigl(T,\pi^{-1}(y)\bigr).
	\end{align*}
	\begin{remark}
		The quantity $a_\pi$ is defined using classical upper capacity
		entropy rather than an entropy scale. This choice is dictated by the
		factor covering argument below: concatenating orbit blocks multiplies
		the corresponding fiber covering numbers, and this contribution is
		controlled by an exponential term of the form $e^{(a_\pi+\tau)n}$.
		The transfer function $\Gamma_{\scl}$ converts this exponential fiber
		growth into the appropriate critical parameter of the scaling family.
		For a general scaling family, the entropy scale of a fiber alone does
		not provide the multiplicative estimate required by this argument.
		Indeed, Corollary~\ref{cor 4.6} shows that a direct
		additive bound involving the entropy scales of the factor and the
		fibers may fail.
	\end{remark}
	The following covering estimate is the common ingredient in the two factor
	inequalities.

	\begin{lemma}
		\label{lem 4.2}
		Assume that $a_\pi<+\infty$. For every $\epsilon>0$ and
		$\tau>0$, there exist $\delta>0$ and $M\in\mathbb N$ such that,
		for every $y\in Y$ and $n\in\mathbb N$, the set
		$	\pi^{-1}\bigl(B_n^Y(y,\delta)\bigr)$
		can be covered by at most
		$e^{(a_\pi+\tau)(n+M)}$
		Bowen balls in $X$ of order $n$ and radius $4\epsilon$.
		Consequently, for every non-empty compact set $K\subset X$,
		\begin{align*}
			r_n(K,4\epsilon)
			\leq
			e^{(a_\pi+\tau)(n+M)}r_n\bigl(\pi(K),\delta\bigr).
		\end{align*}
	\end{lemma}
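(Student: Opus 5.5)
The plan is to follow Bowen's classical argument \cite[Theorem~17]{Bow71}: first obtain a uniform fiber estimate at a fixed block length $M$, and then concatenate such blocks along the orbit.

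\emph{Step 1 (uniform fiber cover at a fixed time).} Fix $\epsilon,\tau>0$. Since $a_\pi<+\infty$, for every $y\in Y$ we have $h^U_{\mathrm{top}}(T,\pi^{-1}(y))\le a_\pi$. Hence for each $y$ there is $m_y$ such that $\pi^{-1}(y)$ is covered by an open family $\mathcal U_y$ of at most $e^{(a_\pi+\tau/2)m_y}$ Bowen balls $B_{m_y}(x,2\epsilon)$. The union $U_y$ of this family is an open neighbourhood of the compact set $\pi^{-1}(y)$. Because $\pi$ is a closed map, there is an open $W_y\ni y$ with $\pi^{-1}(W_y)\subset U_y$. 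Note that $m_y$ depends on $y$, and making this uniform is the first difficulty.

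The fix I would try first is to choose $m_y$ inside a window $[L,2L]$ for a single large $L$. The trouble is that the capacity limsup only supplies some large $m$, not every large $m$. Instead I would use the limit in $\epsilon$ together with $\limsup$ as $\le$: for each $y$, all sufficiently large $m\ge L_y$ satisfy the bound. By compactness, pick $y_1,\dots,y_p$ with $Y=\bigcup W_{y_j}$. Let $\delta$ be a Lebesgue number of this cover, and let $L=\max_j L_{y_j}$. Now fix a single $m\ge L$ that works for every $y_j$.

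\emph{Step 2 (concatenation).} Given $y$ and $n$, write $n\le qm+r$ with $0\le r<m$. For each $k<q$, the point $S^{km}z$, for $z\in B_n^Y(y,\delta)$, lies within $\delta$ of $S^{km}y$. So $S^{km}y$ determines an index $j_k$ with $B^Y(S^{km}y,\delta)\subset W_{y_{j_k}}$, and then $T^{km}x\in U_{y_{j_k}}$ for every $x\in\pi^{-1}(B_n^Y(y,\delta))$. Choosing one ball of $\mathcal U_{y_{j_k}}$ for each $k$ gives at most $\prod_k e^{(a_\pi+\tau/2)m}\le e^{(a_\pi+\tau/2)n}$ cylinder sets $\bigcap_k T^{-km}B_m(x_k,2\epsilon)$. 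The tail of length $r<m$ is handled by a fixed cover of $X$ by $C_m$ balls $B_m(\cdot,2\epsilon)$. Each nonempty cylinder has $d_n$-diameter at most $4\epsilon$, so it sits inside one Bowen ball $B_n(\cdot,4\epsilon)$ centred at any of its points.

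\emph{Step 3 (constants).} The total count is at most $C_m e^{(a_\pi+\tau/2)n}$. Choose $M$ so that $C_m\le e^{(a_\pi+\tau)M}$, which gives the bound $e^{(a_\pi+\tau)(n+M)}$. For the consequence, take a minimal $(n,\delta)$-spanning set $F$ of $\pi(K)$. Then $K\subset\bigcup_{y\in F}\pi^{-1}(B_n^Y(y,\delta))$, and the spanning estimate follows; recentring the balls in $K$ only costs a factor of $2$ in the radius, which the $4\epsilon$ slack absorbs if the cylinders are set up with $\epsilon$ in place of $2\epsilon$.

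The main obstacle is the uniformity of $m$ in Step 1, since the capacity definition only controls $\limsup_n$ in the limit $\epsilon\to0$. I would use that for fixed $\epsilon$ the inner quantity $\limsup_n\frac1n\log r_n(\pi^{-1}(y),2\epsilon)\le a_\pi$. This forces $r_m\le e^{(a_\pi+\tau/2)m}$ for all large $m$, which is exactly what allows a common $m$ after the compactness step.
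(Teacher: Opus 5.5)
Your proposal has a genuine gap in the uniformization step, which is the very step you flag as the main obstacle. The open sets $W_{y_j}$, and hence the Lebesgue number $\delta$, are built from the particular covers $\mathcal U_{y_j}$ of order $m_{y_j}$: $W_{y_j}$ is chosen so that $\pi^{-1}(W_{y_j})\subset U_{y_j}$, the union of those order-$m_{y_j}$ balls.

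When you switch to a common $m\geq L=\max_j L_{y_j}$, all you know is $r_m(\pi^{-1}(y_j),2\epsilon)\leq e^{(a_\pi+\tau/2)m}$. That bounds only the single fibre over $y_j$. Step~2, however, needs a cover of all of $\pi^{-1}(W_{y_{j_k}})$ by at most $e^{(a_\pi+\tau/2)m}$ balls of order $m$. Neither way of supplying it works:
\begin{itemize}
\item If you rebuild $\mathcal U_{y_j}$ at order $m$, its union yields a new, possibly much smaller, neighbourhood $W^{(m)}_{y_j}$. Nothing guarantees that these still cover $Y$, so $\delta$ need no longer be a Lebesgue number. Adding further points $y$ to the finite family can bring in thresholds $L_y>m$, since nothing gives $\sup_{y\in Y}L_y<\infty$ a priori.
\item If you keep the old covers, the blocks have lengths $m_{y_j}$, not $m$.
\end{itemize}
A uniform-in-$y$ bound of this kind does hold, at radius $4\epsilon$, but only as a consequence of the lemma itself. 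So it cannot serve as an input.

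The missing idea is that no common block length is needed: one good $m(y)$ per $y$ suffices if you concatenate blocks of variable length, as in Bowen's argument and in the paper. Set $t_0=0$. Given $t_s<n$, choose $i_s$ with $B_Y(S^{t_s}y,\delta)\subset W_{y_{i_s}}$, put $t_{s+1}=t_s+m(y_{i_s})$, and stop at the first $t_{\ell+1}\geq n$. Because the exponent is additive in the block lengths, the number of admissible tuples is at most $\prod_s e^{(a_\pi+\tau)m(y_{i_s})}=e^{(a_\pi+\tau)t_{\ell+1}}\leq e^{(a_\pi+\tau)(n+M)}$ with $M=\max_i m(y_i)$. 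The overshoot past $n$ replaces your separate tail cover.

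The rest of your argument goes through unchanged: closedness of $\pi$, the Lebesgue number, the $4\epsilon$ diameter bound for each class, and the spanning consequence. The recentring inside $K$ is unnecessary, since spanning sets here are not required to lie in $K$.
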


	\begin{proof}
		Fix $\epsilon>0$ and $\tau>0$. For each $y\in Y$, the definition of
		$a_\pi$ allows us to choose $m(y)\in\mathbb N$ such that
		\begin{align}
			\label{equ 4.1}
			r_{m(y)}\bigl(\pi^{-1}(y),\epsilon\bigr)
			\leq e^{(a_\pi+\tau)m(y)}
			.
		\end{align}
		Let $F_y$ be an $(m(y),\epsilon)$-spanning set of
		$\pi^{-1}(y)$ having minimal cardinality, and put
		\begin{align*}
			U_y
			:=
			\bigcup_{z\in F_y}B_{m(y)}^X(z,2\epsilon).
		\end{align*}
		Then $U_y$ is an open neighborhood of $\pi^{-1}(y)$. Since
		$X\setminus U_y$ is compact and
		$y\notin\pi(X\setminus U_y)$, there exists an open neighborhood
		$W_y$ of $y$ such that
		$\pi^{-1}(W_y)\subset U_y.$

		Choose $y_1,\ldots,y_q\in Y$ such that
		$W_{y_1},\ldots,W_{y_q}$ cover $Y$. By the Lebesgue number lemma,
		there exists $\delta>0$ such that every ball
		$B_Y(y,\delta)$ is contained in one of these sets. Set
		\begin{align*}
			M:=\max_{1\leq i\leq q}m(y_i).
		\end{align*}
		Fix $y\in Y$ and $n\in\mathbb N$. Starting with $t_0=0$, suppose
		that $t_s<n$. Choose $i_s\in\{1,\ldots,q\}$ such that $B_Y\bigl(S^{t_s}y,\delta\bigr)
		\subset W_{y_{i_s}},$
		and define $t_{s+1}:=t_s+m(y_{i_s}).$
		Continue until $t_\ell<n\leq t_{\ell+1}$. Then
		\begin{align}
			\label{equ 4.2}
			t_{\ell+1}\leq n+M.
		\end{align}

		Let $x\in\pi^{-1}(B_n^Y(y,\delta))$. For every
		$0\leq s\leq\ell$, one has
		\begin{align*}
			\pi(T^{t_s}x)
			=
			S^{t_s}\pi(x)
			\in
			B_Y\bigl(S^{t_s}y,\delta\bigr)
			\subset W_{y_{i_s}}.
		\end{align*}
		Then there exists
		$z_s\in F_{y_{i_s}}$ satisfying $	d_{m(y_{i_s})}^X(T^{t_s}x,z_s)<2\epsilon.$
		Thus every point of
		$\pi^{-1}(B_n^Y(y,\delta))$ determines a tuple
		$(z_0,\ldots,z_\ell)$. If two points determine the same tuple,
		their $d_n^X$-distance is less than $4\epsilon$. Choosing one point
		from each non-empty class therefore gives a cover by
		$(n,4\epsilon)$-Bowen balls.

		By \eqref{equ 4.1} and
		\eqref{equ 4.2}, the number of possible tuples is at
		most
		\begin{align*}
			\prod_{s=0}^{\ell}|F_{y_{i_s}}|
			\leq e^{(a_\pi+\tau)
				\sum_{s=0}^{\ell}m(y_{i_s})}
			&=e^{(a_\pi+\tau)t_{\ell+1}}\leq e^{(a_\pi+\tau)(n+M)}.
		\end{align*}
		This proves the first assertion.

		Finally, let $G$ be a minimal $(n,\delta)$-spanning set of
		$\pi(K)$. Applying the first assertion to each $y\in G$ gives a
		cover of $K$ by at most
		$|G|e^{(a_\pi+\tau)(n+M)}$
		$(n,4\epsilon)$-Bowen balls.
	\end{proof}

	We now prove the factor inequality for upper capacity entropy scales.

	\begin{proof}[Proof of Theorem~\ref{thm 1.2} {\rm (1)}]
		We first prove the left-hand inequality. Fix $\epsilon>0$. By the
		uniform continuity of $\pi$, there exists $\delta>0$ such that
		\begin{align}
			\label{equ 4.3}
			d_X(x,x')<\delta
			\quad\Longrightarrow\quad
			d_Y(\pi(x),\pi(x'))<\epsilon.
		\end{align}
		Let $F\subset\pi(K)$ be $(n,\epsilon)$-separated. For each
		$y\in F$, choose $x_y\in K$ with $\pi(x_y)=y$. If $y,y'\in F$
		are distinct, then for some $0\leq j<n$, $	d_Y(S^jy,S^jy')\geq\epsilon.$
		Using $\pi\circ T^j=S^j\circ\pi$ and
		\eqref{equ 4.3}, we obtain $	d_X(T^jx_y,T^jx_{y'})\geq\delta.$
		Hence $\{x_y:y\in F\}$ is $(n,\delta)$-separated and
		\begin{align*}
			s_n\bigl(\pi(K),\epsilon\bigr)
			\leq s_n(K,\delta).
		\end{align*}
		Multiplying by $\scl_\alpha(e^{-n})$ and taking the corresponding
		limits proves
		\begin{align*}
			\overline h_{\scl}^{C}\bigl(S,\pi(K)\bigr)
			\leq
			\overline h_{\scl}^{C}(T,K).
		\end{align*}

		For the reverse estimate, put
		\begin{align*}
			u:=\overline h_{\scl}^{C}\bigl(S,\pi(K)\bigr).
		\end{align*}
		If $u=+\infty$, $a_\pi=+\infty$, or
		$\Gamma_{\scl}(u,a_\pi)=+\infty$, there is nothing to prove.
		Otherwise, let $\gamma$ be any parameter occurring in the set on
		the right-hand side of \eqref{equ 1.3}. Then
		there exist $\alpha>u$, $\tau>0$, $C>0$, and $N_0\in\mathbb N$
		such that
		\begin{align}
			\label{equ 4.4}
			e^{(a_\pi+\tau)n}
			\frac{\scl_\gamma(e^{-n})}
			{\scl_\alpha(e^{-n})}
			\leq C
		\end{align}
		for every $n\geq N_0$.

		Fix $\epsilon>0$, and choose $\delta>0$ and $M\in\mathbb N$ from
		Lemma~\ref{lem 4.2}. Since $\alpha>u$ and the upper capacity
		entropy scale can equivalently be defined by spanning numbers
		(Remark~\ref{rem 2.8}), one has
		\begin{align*}
			\limsup_{n\to\infty}
			r_n\bigl(\pi(K),\delta\bigr)
			\scl_\alpha(e^{-n})
			=0.
		\end{align*}
		For $n\geq N_0$, by
		\eqref{equ 4.4}, we obtain
		\begin{align*}
			r_n(K,4\epsilon)\scl_\gamma(e^{-n})
			&\leq
			e^{(a_\pi+\tau)M}
			r_n\bigl(\pi(K),\delta\bigr)
			e^{(a_\pi+\tau)n}\scl_\gamma(e^{-n})\\
			&\leq
			Ce^{(a_\pi+\tau)M}
			r_n\bigl(\pi(K),\delta\bigr)
			\scl_\alpha(e^{-n}).
		\end{align*}
		Therefore,
		\begin{align*}
			\limsup_{n\to\infty}
			r_n(K,4\epsilon)\scl_\gamma(e^{-n})=0.
		\end{align*}
		Letting $\epsilon\to0$ yields
		$\overline h_{\scl}^{C}(T,K)\leq\gamma$. Taking the infimum over
		all admissible $\gamma$ proves
		\begin{align*}
			\overline h_{\scl}^{C}(T,K)
			\leq
			\Gamma_{\scl}(u,a_\pi).
		\end{align*}
	\end{proof}

	\subsection{Factor inequalities for Bowen entropy scales}
	\label{subsec 4.2}

	The same lifting estimate also applies to variable-length Bowen covers. This
	gives the factor inequality for the Bowen entropy scale on an arbitrary
	subset.

	\begin{proof}[Proof of   Theorem~\ref{thm 1.2}(2)]
		We begin with the left-hand inequality. Fix $\epsilon>0$, and choose
		$\delta>0$ as in \eqref{equ 4.3}. Then $	\pi\bigl(B_n^X(x,\delta)\bigr)
		\subset
		B_n^Y(\pi(x),\epsilon)$
		for every $x\in X$ and $n\in\mathbb N$. Consequently, the image
		under $\pi$ of any Bowen cover of $E$ is a Bowen cover of
		$\pi(E)$ with the same orders and the same weights. Thus, for
		every $\alpha>0$ and $N\in\mathbb N$,
		\begin{align*}
			M_{\scl_\alpha}
			\bigl(S,\pi(E),\epsilon,N\bigr)
			\leq
			M_{\scl_\alpha}(T,E,\delta,N).
		\end{align*}
		Passing to the limits in $N$ and in the radii gives
		\begin{align*}
			h_{\scl}^{B}\bigl(S,\pi(E)\bigr)
			\leq h_{\scl}^{B}(T,E).
		\end{align*}

		For the right-hand inequality, set $u:=h_{\scl}^{B}\bigl(S,\pi(E)\bigr).$
		As before, the conclusion is immediate if $u=+\infty$,
		$a_\pi=+\infty$, or $\Gamma_{\scl}(u,a_\pi)=+\infty$.
		Let $\gamma$ be admissible in
		\eqref{equ 1.3}, and choose
		$\alpha>u$, $\tau>0$, $C>0$, and $N_0\in\mathbb N$ so that
		\eqref{equ 4.4} holds.

		Fix $\epsilon>0$, and let $\delta>0$ and $M\in\mathbb N$ be
		given by Lemma~\ref{lem 4.2}. Let $N\geq N_0$, and consider
		an arbitrary countable cover
		$\left\{
		B_{n_j}^Y(y_j,\delta)
		\right\}_{j\in J}$
		of $\pi(E)$ with $n_j\geq N$. For each $j$, the first assertion
		of Lemma~\ref{lem 4.2} gives a cover of
		$\pi^{-1}(B_{n_j}^Y(y_j,\delta))$ by at most $e^{(a_\pi+\tau)(n_j+M)}$
		Bowen balls in $X$ of order $n_j$ and radius $4\epsilon$.
		Taking all these lifted balls gives a cover of $E$. Therefore,
		\begin{align*}
			M_{\scl_\gamma}(T,E,4\epsilon,N)
			\leq
			\sum_{j\in J}
			e^{(a_\pi+\tau)(n_j+M)}
			\scl_\gamma(e^{-n_j})
			\leq
			Ce^{(a_\pi+\tau)M}
			\sum_{j\in J}
			\scl_\alpha(e^{-n_j}).
		\end{align*}
		Taking the infimum over all such covers and then letting
		$N\to\infty$, we obtain
		\begin{align}
			\label{equ 4.5}
			M_{\scl_\gamma}(T,E,4\epsilon)
			\leq
			Ce^{(a_\pi+\tau)M}
			M_{\scl_\alpha}\bigl(S,\pi(E),\delta\bigr).
		\end{align}
		Since $\alpha>u$, the quantity on the right-hand side of
		\eqref{equ 4.5} is zero. Hence
		$M_{\scl_\gamma}(T,E,4\epsilon)=0$. Letting
		$\epsilon\to0$ gives
		$h_{\scl}^{B}(T,E)\leq\gamma.$
		Taking the infimum over all admissible $\gamma$ completes the
		proof.
	\end{proof}

	\begin{corollary}
		\label{cor 4.3}
		Suppose that $\scl_\alpha(t)=t^\alpha$. Then
		\begin{align*}
			\overline h_{\scl}^{C}\bigl(S,\pi(K)\bigr)
			\leq
			\overline h_{\scl}^{C}(T,K)
			\leq
			\overline h_{\scl}^{C}\bigl(S,\pi(K)\bigr)+a_\pi
		\end{align*}
		for every non-empty compact set $K\subset X$, and
		\begin{align*}
			h_{\scl}^{B}\bigl(S,\pi(E)\bigr)
			\leq
			h_{\scl}^{B}(T,E)
			\leq
			h_{\scl}^{B}\bigl(S,\pi(E)\bigr)+a_\pi
		\end{align*}
		for every non-empty subset $E\subset X$.
	\end{corollary}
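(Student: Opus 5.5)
The plan is to obtain Corollary~\ref{cor 4.3} directly from Theorem~\ref{thm 1.2}. The lower bounds in both parts are exactly the left-hand inequalities there. So the only task is to compute the transfer function for the power scaling and show
\begin{align*}
	\Gamma_{\scl}(u,a)\leq u+a
\end{align*}
whenever $u,a<+\infty$. If $u=+\infty$ or $a_\pi=+\infty$, the right-hand side $u+a_\pi$ is $+\infty$ and there is nothing to prove.

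For $\scl_\alpha(t)=t^\alpha$ we have
\begin{align*}
	e^{(a+\tau)n}\frac{\scl_\gamma(e^{-n})}{\scl_\alpha(e^{-n})}
	=e^{(a+\tau+\alpha-\gamma)n}.
\end{align*}
This stays bounded as $n\to\infty$ exactly when $\gamma\geq\alpha+a+\tau$. Fix $\eta>0$ and choose $\alpha=u+\eta/2>u$ and $\tau=\eta/2$. Then $\gamma=u+a+\eta$ is admissible in \eqref{equ 1.3}, so $\Gamma_{\scl}(u,a)\leq u+a+\eta$. Letting $\eta\to0$ gives $\Gamma_{\scl}(u,a)\leq u+a$.

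Conversely, every admissible $\gamma$ satisfies $\gamma\geq\alpha+a+\tau>u+a$. Hence in fact $\Gamma_{\scl}(u,a)=u+a$, which matches the remark after Theorem~\ref{thm 1.2}, although only the upper bound is needed.

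Now substitute $u=\overline h_{\scl}^{C}(S,\pi(K))$ and $a=a_\pi$ into Theorem~\ref{thm 1.2}(1), and $u=h_{\scl}^{B}(S,\pi(E))$ and $a=a_\pi$ into Theorem~\ref{thm 1.2}(2). This yields the two displayed chains of inequalities.

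There is one routine point to check: $t\mapsto t^\alpha$ is indeed a scaling in the sense of Definition~\ref{def 2.1}, so that Theorem~\ref{thm 1.2} applies. For $\alpha>\beta>0$ and $\lambda>1$ close to $1$, namely $\lambda<\alpha/\beta$, both quotients
\begin{align*}
	\frac{\epsilon^\alpha}{\epsilon^{\beta\lambda}}\to0
	\quad\text{and}\quad
	\frac{\epsilon^\alpha}{(\epsilon^{\beta})^{\lambda}}\to0
	\qquad\text{as }\epsilon\to0^+.
\end{align*}
The vanishing requirement $\scl_\alpha(\epsilon)\to0$ for $\alpha>0$ also holds.

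I expect no genuine obstacle. The only care needed is bookkeeping of the infinite cases and the strict inequality $\alpha>u$ in \eqref{equ 1.3}, which is absorbed by the $\eta$-slack above.
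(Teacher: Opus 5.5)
Your proposal is correct and is the paper's proof: compute $e^{(a+\tau)n}\scl_\gamma(e^{-n})/\scl_\alpha(e^{-n})=e^{(a+\tau+\alpha-\gamma)n}$, read off $\Gamma_{\scl}(u,a)=u+a$ from \eqref{equ 1.3}, and substitute this into Theorem~\ref{thm 1.2}. Your explicit choice $\alpha=u+\eta/2$, $\tau=\eta/2$, $\gamma=u+a+\eta$, together with your handling of the infinite cases and your check that $t^\alpha$ is a scaling, just spells out details the paper leaves implicit.
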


	\begin{proof}
		For the power scaling,
		\begin{align*}
			e^{(a+\tau)n}
			\frac{\scl_\gamma(e^{-n})}
			{\scl_\alpha(e^{-n})}
			=
			e^{(a+\tau+\alpha-\gamma)n}.
		\end{align*}
		It follows directly from
		\eqref{equ 1.3} that
		$\Gamma_{\scl}(u,a)=u+a$. The conclusion now follows from
		Theorem~\ref{thm 1.2}.
	\end{proof}

	\subsection{The transfer function for typical scalings}

	We next compute $\Gamma_{\scl}$ for several scaling families. These
	computations illustrate how the temporal growth represented by a scaling
	family interacts with the exponential complexity of the fibers.

	\begin{proposition}
		\label{prop 4.4}
		The following formulas hold.
		\begin{enumerate}
			\item[\rm(1)]
			Suppose that
			$\scl_\alpha(t)=t^{\phi(\alpha)}$, where
			$\phi:[0,+\infty)\to[0,+\infty)$ is an increasing
			homeomorphism. Then
			\begin{align*}
				\Gamma_{\scl}(u,a)
				=
				\phi^{-1}\bigl(\phi(u)+a\bigr).
			\end{align*}

			\item[\rm(2)]
			Fix $\theta>0$ and let
			$\scl_\alpha(t)
			=e^{-\alpha\left(\log\frac1t\right)^\theta}$.
			Then
			\begin{align*}
				\Gamma_{\scl}(u,a)
				=
				\begin{cases}
					+\infty, & 0<\theta<1,\\
					u+a,     & \theta=1,\\
					u,       & \theta>1.
				\end{cases}
			\end{align*}

			\item[\rm(3)]
			For the logarithmic scaling $\scl_\alpha(t)
			=\left(\log\frac1t\right)^{-\alpha}$,
			one has
			\begin{align*}
				\Gamma_{\scl}(u,a)=+\infty.
			\end{align*}

			\item[\rm(4)]
			For the order scaling $\scl_\alpha(t)
			=e^{-\left(\log\frac1t\right)^\alpha}$,
			one has
			\begin{align*}
				\Gamma_{\scl}(u,a)=\max\{u,1\}.
			\end{align*}

			\item[\rm(5)]
			Let
			$c:[0,+\infty)\to[0,+\infty)$ be an increasing
			homeomorphism and let
			$\rho:[0,+\infty)\to[0,+\infty)$ be continuous with
			$\rho(0)=0$. For the logarithmically corrected family
			\begin{align*}
				\scl_\alpha(t)
				=
				t^{c(\alpha)}
				\left(1+\log\frac1t\right)^{-\rho(\alpha)},
			\end{align*}
			one has
			\begin{align*}
				\Gamma_{\scl}(u,a)
				=
				c^{-1}\bigl(c(u)+a\bigr).
			\end{align*}
		\end{enumerate}
	\end{proposition}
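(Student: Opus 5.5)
The plan is to reduce each item to a one-variable asymptotic comparison. Write $a_\alpha(n)=-\log\scl_\alpha(e^{-n})$. The defining condition in \eqref{equ 1.3} for a triple $(\gamma,\alpha,\tau)$ then reads
\begin{align*}
\limsup_{n\to\infty}\bigl[(a+\tau)n+a_\alpha(n)-a_\gamma(n)\bigr]<+\infty .
\end{align*}
For each family I will compute $D_n(\gamma,\alpha,\tau):=(a+\tau)n+a_\alpha(n)-a_\gamma(n)$ explicitly. I then decide for which $\gamma$ some $\alpha>u$ and $\tau>0$ make $D_n$ bounded above, and take the infimum over such $\gamma$. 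In every case the answer is read off from the leading-order term of $D_n$. Because $\alpha>u$ and $\tau>0$ may be chosen freely, the infimum is approached by letting $\alpha\downarrow u$ and $\tau\downarrow0$. Continuity of the parametrising functions ($\phi$, $c$) then converts the resulting strict inequality into the claimed formula.

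\textbf{Items (1) and (5).} For (1), $a_\alpha(n)=\phi(\alpha)n$, so $D_n=(a+\tau+\phi(\alpha)-\phi(\gamma))n$. This is bounded above iff $\phi(\gamma)\ge a+\tau+\phi(\alpha)$. Such $\alpha>u$, $\tau>0$ exist iff $\phi(\gamma)>a+\phi(u)$, so the infimum is $\phi^{-1}(\phi(u)+a)$. Item (5) is the same computation, except that $a_\alpha(n)=c(\alpha)n+\rho(\alpha)\log(1+n)$. The logarithmic correction is dominated by the linear term whenever the linear coefficient $a+\tau+c(\alpha)-c(\gamma)$ is nonzero. In the borderline case where it vanishes, one can shift $\tau$ slightly, so the infimum is unaffected and equals $c^{-1}(c(u)+a)$.

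\textbf{Items (2)--(4).} For (2), $a_\alpha(n)=\alpha n^\theta$. Thus $D_n=(a+\tau)n+(\alpha-\gamma)n^\theta$, and I split into three cases. If $\theta<1$, the linear term dominates for every $\gamma$, so $D_n\to+\infty$ and the set in \eqref{equ 1.3} is empty, giving $+\infty$. The case $\theta=1$ is (1) with $\phi=\mathrm{id}$. If $\theta>1$, then $D_n\to-\infty$ as soon as $\gamma>\alpha$. Hence every $\gamma>u$ is admissible (choose $u<\alpha<\gamma$). For $\gamma\le u$ we have $\gamma<\alpha$, so $D_n\to+\infty$; the infimum is therefore $u$. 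For (3), $a_\alpha(n)=\alpha\log n$, so $D_n=(a+\tau)n+(\alpha-\gamma)\log n\to+\infty$ always, giving $+\infty$. For (4), $a_\alpha(n)=n^\alpha$, so $D_n=(a+\tau)n+n^\alpha-n^\gamma$, and boundedness requires $n^\gamma$ to dominate both $n$ and $n^\alpha$. If $\gamma>\max\{\alpha,1\}$, this holds. If $\gamma<\max\{\alpha,1\}$, it fails. In the boundary cases $\gamma=1$ or $\gamma=\alpha$, a check shows $D_n\to+\infty$: for $\gamma=1>\alpha$ it is $(a+\tau-1)n+n^\alpha$, which may be bounded if $a+\tau<1$. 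The infimum over admissible $\gamma$ equals $\inf_{\alpha>u}\max\{\alpha,1\}=\max\{u,1\}$ regardless of these boundary cases, since the admissible set always contains $(\max\{\alpha,1\},\infty)$ and is contained in $[\min\{\max\{\alpha,1\}\},\infty)$.

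The main obstacle is handling the boundary/degenerate cases carefully, namely equality of leading exponents and the role of $a$ versus $1$ in (4), and ensuring the infimum is unaffected by them. I will treat this uniformly by showing that the admissible set is sandwiched between $(g(u),\infty)$ and $[g(u),\infty)$ for the claimed value $g(u)$. I will also make sure $u=0$ is covered, since $\alpha>u$ is always allowed.
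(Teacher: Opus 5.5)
Your proposal is correct and follows essentially the same route as the paper: pass to the exponent $(a+\tau)n+a_\alpha(n)-a_\gamma(n)$, decide admissibility from its leading term, and take the infimum over $\alpha>u$, $\tau>0$ using continuity of $\phi$ or $c$. (The paper also checks that the five families satisfy Definition~\ref{def 2.1}; this is not needed for the formulas themselves.)

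In (4), drop the sentence asserting $D_n\to+\infty$ in the boundary cases. As you yourself observe, $\gamma=1$ is admissible when $u<1$ and $a<1$. Instead, state the sandwich as $(\max\{u,1\},\infty)\subseteq\{\text{admissible }\gamma\}\subseteq[\max\{u,1\},\infty)$, which is exactly what your argument proves.
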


	\begin{proof}
		Positivity and monotonicity in $t$ are immediate for all five
		families. We first verify the asymptotic separation required in
		Definition~\ref{def 2.1}. Put
		$L=\log\frac1t.$
		Given $p>q>0$, in case \rm(1) choose $\lambda>1$
		sufficiently close to $1$ so that
		$\phi(p)>\lambda\phi(q)$. In case \rm(2), choose $\lambda$
		so that
		\begin{align*}
			p>q\lambda^\theta
			\qquad\text{and}\qquad
			p>\lambda q.
		\end{align*}
		In case \rm(3), it is enough to require $p>\lambda q$.
		In case \rm(4), the term $L^p$ dominates both
		$\lambda^qL^q$ and $\lambda L^q$. Finally, in case \rm(5),
		choose $\lambda$ so that $c(p)>\lambda c(q)$; the logarithmic
		factors have lower order than the resulting linear term in $L$.
		Thus all five families satisfy the conditions in
		Definition~\ref{def 2.1}.

		For \rm(1), one has
		\begin{align*}
			e^{(a+\tau)n}
			\frac{\scl_\gamma(e^{-n})}
			{\scl_\alpha(e^{-n})}
			=e^{(a+\tau+\phi(\alpha)-\phi(\gamma))n}.
		\end{align*}
		For fixed $\alpha$ and $\tau$, this expression is bounded
		precisely when
		\begin{align*}
			\phi(\gamma)
			\geq
			\phi(\alpha)+a+\tau.
		\end{align*}
		Taking the infimum over $\alpha>u$ and $\tau>0$, and using
		the continuity of $\phi^{-1}$, gives
		\begin{align*}
			\Gamma_{\scl}(u,a)
			=
			\phi^{-1}\bigl(\phi(u)+a\bigr).
		\end{align*}

		For \rm(2), the expression appearing in the definition of
		$\Gamma_{\scl}$ is $e^{(a+\tau)n+(\alpha-\gamma)n^\theta}$.
		If $0<\theta<1$, the positive linear term dominates for every
		finite $\gamma$, so no parameter is admissible. If $\theta=1$,
		admissibility is equivalent to $	\gamma\geq\alpha+a+\tau.$
		If $\theta>1$, every $\gamma>\alpha$ is admissible. Therefore,
		\begin{align*}
			\Gamma_{\scl}(u,a)
			=
			\begin{cases}
				+\infty, & 0<\theta<1,\\
				u+a,     & \theta=1,\\
				u,       & \theta>1.
			\end{cases}
		\end{align*}

		For \rm(3), the corresponding expression is
		$e^{(a+\tau)n}n^{\alpha-\gamma}.$
		It diverges for every finite $\alpha$ and $\gamma$, and hence $\Gamma_{\scl}(u,a)=+\infty.$

		For \rm(4), the corresponding expression is
		$e^{(a+\tau)n-n^\gamma+n^\alpha}$.
		For every $\alpha>u$, each
		$\gamma>\max\{\alpha,1\}$ is admissible. Conversely, the
		expression diverges whenever
		$\gamma<\max\{\alpha,1\}$. Taking the infimum over
		$\alpha>u$ gives
		$	\Gamma_{\scl}(u,a)=\max\{u,1\}.$

		For \rm(5), one has
		\begin{align*}
			\log\left(
			e^{(a+\tau)n}
			\frac{\scl_\gamma(e^{-n})}
			{\scl_\alpha(e^{-n})}
			\right)
			=
			\bigl(a+\tau+c(\alpha)-c(\gamma)\bigr)n
			+
			\bigl(\rho(\alpha)-\rho(\gamma)\bigr)
			\log(n+1).
		\end{align*}
		If
		$c(\gamma)>c(\alpha)+a+\tau$, this expression tends to
		$-\infty$; if
		$c(\gamma)<c(\alpha)+a+\tau$, it tends to $+\infty$.
		Taking the infimum over $\alpha>u$ and $\tau>0$, and using
		the continuity of $c^{-1}$, yields
		\begin{align*}
			\Gamma_{\scl}(u,a)
			=
			c^{-1}\bigl(c(u)+a\bigr).
		\end{align*}
		This completes the proof.
	\end{proof}

	The logarithmically corrected family also gives explicit examples showing
	that the upper factor inequalities may hold with equality or be strict.

	\begin{proposition}
		\label{prop 4.5}
		Assume that $\scl$ is the  scaling
		in Proposition~\ref{prop 4.4}(5), and let
		$m,k\geq2$. The following statements hold for both upper capacity
		and Bowen entropy scales.
		\begin{enumerate}
			\item Let $	(Y,S)=(\Sigma_m,\sigma)$, $(X,T)=(\Sigma_m\times\Sigma_k,
			\sigma\times\sigma)$
			and let $\pi:X\to Y$ be the first-coordinate projection. Then
			\begin{align*}
				\overline h_{\scl}^{C}(T,X)
				&=h_{\scl}^{B}(T,X)
				=c^{-1}(\log m+\log k),\\
				\overline h_{\scl}^{C}(S,Y)
				&=h_{\scl}^{B}(S,Y)
				=c^{-1}(\log m),
			\end{align*}
			and $a_\pi=\log k$. Consequently, the right-hand inequalities
			in Theorem~\ref{thm 1.2} are equalities.

			\item Let $(Y',S')
			=(\Sigma_m,\sigma)\sqcup(\{p\},\mathrm{id})$ and $(X',T')
			=(\Sigma_m,\sigma)\sqcup(\Sigma_k,\sigma),$
			where $\sqcup$ denotes the topological disjoint union. Define
			$\pi':X'\to Y'$ to be the identity on $\Sigma_m$ and to map
			$\Sigma_k$ to $p$. Then
			\begin{align*}
				\overline h_{\scl}^{C}(T',X')
				&=h_{\scl}^{B}(T',X')
				=c^{-1}\bigl(\max\{\log m,\log k\}\bigr),\\
				\overline h_{\scl}^{C}(S',Y')
				&=h_{\scl}^{B}(S',Y')
				=c^{-1}(\log m),
			\end{align*}
			and $a_{\pi'}=\log k$. Hence the right-hand inequalities in
			Theorem~\ref{thm 1.2} are strict. If $k>m$, their left-hand
			inequalities are also strict.
		\end{enumerate}
	\end{proposition}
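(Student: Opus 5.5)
The approach is to reduce every entropy scale in the statement to a classical exponential count, which for full shifts is $s_n(\Sigma_m,\epsilon)\asymp m^{n}$ up to constants depending on $\epsilon$. After that, the critical parameter is read off from the sign of the exponent in $\scl_\alpha(e^{-n})=e^{-c(\alpha)n}(1+n)^{-\rho(\alpha)}$.

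\textbf{Capacity computation.} Let $Z$ be a subshift, or a finite disjoint union of subshifts, with $\limsup_n\frac1n\log s_n(Z,\epsilon)=h$ for small $\epsilon$. More precisely, we use $C_\epsilon^{-1}e^{hn}\le s_n(Z,\epsilon)\le C_\epsilon e^{hn}$, where $h$ is the classical topological entropy. Then
\[
s_n(Z,\epsilon)\scl_\alpha(e^{-n})\asymp e^{(h-c(\alpha))n}(1+n)^{-\rho(\alpha)}.
\]
This tends to $0$ if $c(\alpha)>h$ and to $+\infty$ if $c(\alpha)<h$. Hence $\overline h^{C}_{\scl}(Z)=c^{-1}(h)$. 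This uses only that $c$ is an increasing homeomorphism; the polynomial correction is irrelevant. We apply it with
\begin{itemize}
\item $h=\log m+\log k$ for $\Sigma_m\times\Sigma_k$;
\item $h=\log m$ for $\Sigma_m$ and for $\Sigma_m\sqcup\{p\}$;
\item $h=\max\{\log m,\log k\}$ for $\Sigma_m\sqcup\Sigma_k$.
\end{itemize}
For the disjoint unions, note that $s_n$ of a disjoint union is at most the sum and at least the maximum of the pieces' counts, once $\epsilon$ is below the distance between the pieces.

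\textbf{Bowen scale lower bound.} By Remark~\ref{rem 2.8}, $h^B_{\scl}\le\overline h^C_{\scl}$, so only the lower bound is needed. We use Theorem~\ref{thm 3.2}(2) with the Bernoulli (Parry) measure $\mu$ of maximal entropy. On $\Sigma_m\times\Sigma_k$, cylinders satisfy $\mu(B_n(x,\epsilon))\le C e^{-hn}$ uniformly in $x$. Therefore, for $c(\alpha)<h$,
\[
\frac{\scl_\alpha(e^{-n})}{\mu(B_n(x,\epsilon))}\ge C^{-1}e^{(h-c(\alpha))n}(1+n)^{-\rho(\alpha)}\to\infty .
\]
This gives $\underline h_{\mu,\scl}(T,x)\ge c^{-1}(h)$ everywhere, and hence $h^B_{\scl}\ge c^{-1}(h)$. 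For the disjoint unions we use the measure on the piece of larger entropy, together with monotonicity from Proposition~\ref{prop 2.6}. This yields equality of the Bowen and capacity scales in all four systems.

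\textbf{Fibre entropy and the factor inequalities.} For the projection $\pi$, each fibre $\pi^{-1}(y)=\{y\}\times\Sigma_k$ has $r_n$ comparable to $k^n$ under the max metric, so $a_\pi=\log k$. For $\pi'$, the fibres are singletons over $\Sigma_m$ and $\Sigma_k$ over $p$, so $a_{\pi'}=\log k$. Proposition~\ref{prop 4.4}(5) then gives $\Gamma_{\scl}(u,a)=c^{-1}(c(u)+a)$.
\begin{itemize}
\item In case (1), with $u=c^{-1}(\log m)$, this equals $c^{-1}(\log m+\log k)$, which is exactly the value of the entropy scale of $X$. So the right-hand inequalities are equalities.
\item In case (2), $\Gamma=c^{-1}(\log m+\log k)>c^{-1}(\max\{\log m,\log k\})$ because $m,k\ge2$ and $c$ is strictly increasing. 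So the right-hand inequalities are strict.
\item If moreover $k>m$, then $c^{-1}(\log m)<c^{-1}(\log k)$, so the left-hand inequalities are strict as well.
\end{itemize}

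\textbf{Main obstacle.} The only delicate point is making the uniform two-sided estimates $s_n\asymp e^{hn}$ and $\mu(B_n(x,\epsilon))\lesssim e^{-hn}$ precise for each fixed $\epsilon$. For a small radius, a Bowen ball of order $n$ is a cylinder of length $n+\ell(\epsilon)$, which produces only $\epsilon$-dependent constants. These constants are harmless, because the exponential factor dominates the polynomial correction $(1+n)^{-\rho(\alpha)}$.
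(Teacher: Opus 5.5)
Your argument is correct, and its skeleton matches the paper's: identify each entropy scale as $c^{-1}$ of a classical entropy, compute $a_\pi=a_{\pi'}=\log k$, then evaluate $\Gamma_{\scl}$ by Proposition~\ref{prop 4.4}(5). Where you differ is in how the entropy scales are identified.

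The paper does this in one comparison step. From $-\frac1n\log\scl_\alpha(e^{-n})=c(\alpha)+\rho(\alpha)\frac{\log(n+1)}{n}\to c(\alpha)$ it gets $e^{-(c(\alpha)+\delta)n}\le\scl_\alpha(e^{-n})\le e^{-(c(\alpha)-\delta)n}$ for large $n$. Hence both the capacity quantities $s_n(\cdot,\epsilon)\scl_\alpha(e^{-n})$ and the Carath\'eodory sums are sandwiched between their classical counterparts with exponents $c(\alpha)\pm\delta$. Letting $\delta\to0$ gives $\overline h^{C}_{\scl}=c^{-1}(h^{U}_{\mathrm{top}})$ and $h^{B}_{\scl}=c^{-1}(h^{B}_{\mathrm{top}})$, an argument that works for any system. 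The paper then relies on classical facts: the product is the full $mk$-shift, the entropy of a disjoint union is the maximum, and the Bowen entropy of the whole compact space equals its topological entropy.

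You instead read off the capacity scale from exact cylinder counts and take the Bowen upper bound from Remark~\ref{rem 2.8}. You obtain the Bowen lower bound from the Billingsley-type Theorem~\ref{thm 3.2}(2), applied to the uniform Bernoulli measure on the piece of largest entropy.

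The paper's route is shorter and more general. It needs only exponential growth rates, not your two-sided bounds $C_\epsilon^{-1}e^{hn}\le s_n\le C_\epsilon e^{hn}$, which hold here but fail for general subshifts. However, it implicitly imports Bowen's theorem $h^{B}_{\mathrm{top}}(T,X)=h_{\mathrm{top}}(T)$. Your route stays inside the paper's own framework and in effect reproves that equality for these examples by a mass-distribution argument. The price is that it depends on the explicit cylinder structure of full shifts.
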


	\begin{proof}
		For every $\alpha>0$,
		\begin{align}
			\label{equ 4.6}
			-\frac1n\log\scl_\alpha(e^{-n})
			=
			c(\alpha)
			+
			\rho(\alpha)\frac{\log(n+1)}{n}
			\longrightarrow c(\alpha).
		\end{align}
		In particular, for every $\delta>0$ and all sufficiently large $n$,
		\begin{align*}
			e^{-(c(\alpha)+\delta)n}
			\leq
			\scl_\alpha(e^{-n})
			\leq
			e^{-(c(\alpha)-\delta)n}.
		\end{align*}
		Comparison with the classical exponential gauges, followed by
		$\delta\to0$, shows that for each compact symbolic system occurring
		here, both its upper capacity and Bowen entropy scales are obtained by
		applying $c^{-1}$ to its classical topological entropy.

		In the first construction, $(X,T)$ is conjugate to the full shift on
		$mk$ symbols. Moreover, for every $y\in Y$, the map
		$z\mapsto(y,z)$ identifies the Bowen metrics on $\Sigma_k$ with
		those on $\pi^{-1}(y)$. Thus
		\begin{align*}
			h_{\mathrm{top}}(T,X)=\log m+\log k,
			\qquad
			a_\pi=\log k.
		\end{align*}
		The first assertion now follows from the preceding comparison and
		Proposition~\ref{prop 4.4}(5).

		For the second construction, entropy on a finite disjoint union is
		the maximum of the entropies of its components. The only fiber with
		positive upper capacity entropy is
		$(\pi')^{-1}(p)=\Sigma_k$. Therefore,
		\begin{align*}
			h_{\mathrm{top}}(T',X')
			&=\max\{\log m,\log k\},\\
			h_{\mathrm{top}}(S',Y')&=\log m,
			\qquad
			a_{\pi'}=\log k.
		\end{align*}
		Since $m,k\geq2$,
		\begin{align*}
			\max\{\log m,\log k\}<\log m+\log k.
		\end{align*}
		The preceding entropy computations, together with
		Proposition~\ref{prop 4.4}(5), show that the
		right-hand inequalities are strict. If $k>m$, then
		$c^{-1}(\log k)>c^{-1}(\log m)$, which also makes the left-hand
		inequalities strict.
	\end{proof}

	The scaling axioms alone therefore do not permit the right-hand sides of
	Theorem~\ref{thm 1.2} to be replaced by the sum of the entropy scale of
	the factor and the supremum of the entropy scales of the fibers. The next
	consequence gives a concrete non-power example.

	\begin{corollary}
		\label{cor 4.6}
		In the first construction of Proposition~\ref{prop 4.5}, take
		$m=k=2$ and
		\begin{align*}
			\scl_\alpha(t)
			=
			t^{\sqrt\alpha}
			\left(1+\log\frac1t\right)^{-\alpha}.
		\end{align*}
		Then
		\begin{align*}
			\overline h_{\scl}^{C}(T,X)
			&=h_{\scl}^{B}(T,X)=4(\log2)^2,\\
			\overline h_{\scl}^{C}(S,Y)
			&=h_{\scl}^{B}(S,Y)=(\log2)^2,
		\end{align*}
		and every fiber has upper capacity and Bowen entropy scale
		$(\log2)^2$. Consequently, the two additive estimates
		\begin{align*}
			\overline h_{\scl}^{C}(T,X)
			&\leq
			\overline h_{\scl}^{C}(S,Y)
			+
			\sup_{y\in Y}\overline h_{\scl}^{C}
			\bigl(T,\pi^{-1}(y)\bigr),\\
			h_{\scl}^{B}(T,X)
			&\leq
			h_{\scl}^{B}(S,Y)
			+
			\sup_{y\in Y}h_{\scl}^{B}
			\bigl(T,\pi^{-1}(y)\bigr)
		\end{align*}
		fail, whereas the upper bounds in
		Theorem~\ref{thm 1.2} are equalities.
	\end{corollary}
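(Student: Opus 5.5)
The plan is to apply Proposition~\ref{prop 4.5}(1) with $m=k=2$, $c(\alpha)=\sqrt\alpha$ and $\rho(\alpha)=\alpha$, after checking that this family belongs to the class of Proposition~\ref{prop 4.4}(5).

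\textbf{Admissibility.} The map $c(\alpha)=\sqrt\alpha$ is an increasing homeomorphism of $[0,+\infty)$, with $c^{-1}(s)=s^2$. The function $\rho(\alpha)=\alpha$ is continuous and satisfies $\rho(0)=0$. So $\scl$ is a scaling, by the first part of the proof of Proposition~\ref{prop 4.4}. Proposition~\ref{prop 4.4}(5) then gives
\begin{align*}
\Gamma_{\scl}(u,a)=\bigl(\sqrt u+a\bigr)^2 .
\end{align*}

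\textbf{Whole spaces.} Proposition~\ref{prop 4.5}(1) gives
\begin{align*}
\overline h_{\scl}^{C}(T,X)=h_{\scl}^{B}(T,X)=c^{-1}(2\log2)=4(\log2)^2,
\end{align*}
and
\begin{align*}
\overline h_{\scl}^{C}(S,Y)=h_{\scl}^{B}(S,Y)=c^{-1}(\log 2)=(\log2)^2 .
\end{align*}

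\textbf{Fibers.} For each $y$, the map $z\mapsto(y,z)$ identifies the Bowen metrics on $\pi^{-1}(y)$ with those on $(\Sigma_2,\sigma)$. This is the observation used in the proof of Proposition~\ref{prop 4.5}. The upper capacity and Bowen entropy scales are defined purely through Bowen balls and $(n,\epsilon)$-separated sets of the subset. Hence the fiber scales equal those of the full $2$-shift in the ambient system, which are $c^{-1}(\log2)=(\log2)^2$ by the comparison step~\eqref{equ 4.6} in that proof.

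This transfer is the step needing the most care, because the fiber is not an invariant subsystem. The ball $B_n(x,\epsilon)$ in $X$ intersected with $\pi^{-1}(y)$ is exactly the corresponding Bowen ball in $\{y\}\times\Sigma_2$, provided the product metric is taken to be the max metric. For covers by balls centred outside the fiber, I would recentre them at points of the fiber, doubling the radius as in the proof of Theorem~\ref{thm 3.2}(2). This does not change the critical parameter.

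\textbf{Failure of the additive bounds.} The additive right-hand sides equal $(\log2)^2+(\log2)^2=2(\log2)^2$. This is strictly less than $4(\log2)^2$, so both additive estimates fail. Finally, $a_\pi=\log2$ gives
\begin{align*}
\Gamma_{\scl}\bigl((\log2)^2,\log2\bigr)=(\log2+\log2)^2=4(\log2)^2,
\end{align*}
which equals the left side. So the bounds in Theorem~\ref{thm 1.2} are attained, as already asserted in Proposition~\ref{prop 4.5}(1).
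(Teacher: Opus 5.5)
Your proposal is correct and follows the paper's own argument. The paper also applies Proposition~\ref{prop 4.5}(1) with $m=k=2$, $c(\alpha)=\sqrt\alpha$ and $\rho(\alpha)=\alpha$ (so $c^{-1}(s)=s^2$), compares $4(\log2)^2$ with $2(\log2)^2$, and checks that $\Gamma_{\scl}\bigl((\log2)^2,\log2\bigr)=4(\log2)^2$. Your explicit treatment of the fiber scales via the max-metric identification of $\pi^{-1}(y)$ with $\Sigma_2$ fills in a step the paper's one-line proof leaves implicit; under the max metric, any Bowen ball meeting the fiber already cuts it in a set of the form $\{y\}\times B_n^{\Sigma_2}(z,\epsilon)$, so your recentring step is not even needed.
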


	\begin{proof}
		Apply Proposition~\ref{prop 4.5} with
		$c(\alpha)=\sqrt\alpha$ and $\rho(\alpha)=\alpha$. The failure of
		the additive estimates follows from
		$4(\log2)^2>2(\log2)^2$.
	\end{proof}

	\begin{remark}
		The proof of Lemma~\ref{lem 4.2} concatenates orbit blocks and
		therefore multiplies their covering numbers. A general scaling family
		need not transform this multiplication into addition of its parameters.
		The quantity $\Gamma_{\scl}$ records whether the exponential fiber
		growth can be absorbed by a change of scaling parameter, while
		Proposition~\ref{prop 4.5} and Corollary~\ref{cor 4.6} show that the
		resulting bounds are sharp and that this distinction is genuine.
	\end{remark}

	\section{Entropy scales for induced dynamical systems}
	\label{sec 5}
	In this section, we study entropy scales for the induced system on
	the space of probability measures. We first recall the result of
	Glasner and Weiss on zero topological entropy and give a counterexample
	for general scaling families. We then establish a covering estimate
	for induced measure systems and prove the preservation of zero entropy
	scales under Condition~\ref{cond 2}. We also give a
	sufficient condition under which positive entropy scale of the
	original system implies infinite entropy scale of the induced system.
	Finally, we consider entropy scales along prescribed observation
	sequences.

	Let $(X,T)$ be a TDS, and let $\mathscr M(X)$ denote the space of Borel
	probability measures on $X$ endowed with the weak$^*$ topology. The induced
	map is defined by
	\begin{align*}
		T_*\mu=\mu\circ T^{-1},
		\qquad \mu\in\mathscr M(X).
	\end{align*}
	For a compatible metric $D$ on $\mathscr M(X)$, let
	\begin{align*}
		D_n(\mu,\nu)
		=
		\max_{0\leq j<n}
		D(T_*^j\mu,T_*^j\nu)
	\end{align*}
	be the corresponding Bowen metric.

	For the power scaling $\scl_\alpha(t)=t^\alpha$, the entropy scale reduces
	to classical topological entropy. We therefore recall the following result
	of Glasner and Weiss concerning induced systems.

	\begin{theorem}\cite{GW95}
		\label{thm 5.1}
		Let $(X,T)$ be a TDS. If
		$
		h_{top}(T,X)=0,$
		then
		\[
		h_{top}(T_*,\mathscr M(X))=0.
		\]
	\end{theorem}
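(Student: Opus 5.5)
The plan is to argue by contraposition. We assume $h_{top}(T_*,\mathscr M(X))>0$ and produce exponential growth for $T$ itself. The steps are: reduce to finitely many test functions, recast the problem as a covering problem for the convex hull of a finite-dimensional orbit set, and use a combinatorial independence argument to get back to $(X,T)$.

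\emph{Reduction to finitely many test functions.} Fix a dense sequence $\{f_i\}$ in the unit ball of $C(X,\mathbb R)$ and use the compatible metric
\begin{align*}
D(\mu,\nu)=\sum_i 2^{-i}\Bigl|\int f_i\,d\mu-\int f_i\,d\nu\Bigr| .
\end{align*}
Positive entropy of $T_*$ gives $\epsilon>0$ and $c>0$ such that $s_n(\mathscr M(X),\epsilon)\ge e^{cn}$ along a sequence of $n$. Truncate the tail of $D$ at a level $L$ with $\sum_{i>L}2^{-i}<\epsilon/2$. Then some fixed $f=f_i$ with $i\le L$, after passing to a further subsequence and a subset, carries an $(n,\epsilon')$-separated family of cardinality at least $e^{c'n}$, where $c'=c/L$ (pigeonhole over $i\le L$). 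Here separation is measured by
\begin{align*}
\max_{0\le j<n}\Bigl|\int f\circ T^j\,d\mu-\int f\circ T^j\,d\nu\Bigr| .
\end{align*}

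\emph{Passage to convex hulls.} Put
\begin{align*}
V_n=\bigl\{(f(x),f(Tx),\dots,f(T^{n-1}x)) : x\in X\bigr\}\subset[-1,1]^n .
\end{align*}
The map $\mu\mapsto(\int f\circ T^j\,d\mu)_{j<n}$ sends $\mathscr M(X)$ onto $\overline{\mathrm{conv}}(V_n)$. Hence the previous step says that $\mathrm{conv}(V_n)$ contains $e^{c'n}$ points that are $\epsilon'$-separated in $\ell_\infty^n$. On the other hand, $h_{top}(T)=0$ together with uniform continuity of $f$ implies that $V_n$ itself has subexponential $\ell_\infty^n$-covering numbers at every fixed scale. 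It therefore suffices to prove the following geometric statement: if $V_n\subset[-1,1]^n$ and $\mathrm{conv}(V_n)$ has $\ell_\infty$-packing number at least $e^{c'n}$ at scale $\epsilon'$, then $V_n$ has packing number at least $e^{c''n}$ at some scale $\epsilon''$, with $c'',\epsilon''>0$ depending only on $c'$ and $\epsilon'$.

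\emph{Main obstacle.} The geometric statement above is the hard step. A naive argument fails: approximating a measure by an empirical average of $k$ points only works in an averaged ($\ell_2$ or normalized $\ell_1$) metric, not coordinatewise in $\ell_\infty^n$ uniformly in $n$. I would first try an Elton--Pajor / Sauer--Shelah type combinatorial lemma. Exponential $\ell_\infty$-separation of the convex hull should force a set of coordinates $J\subset\{0,\dots,n-1\}$ with $|J|\ge \kappa n$, and levels $a<b$ with $b-a\ge\eta$, on which the vectors of the convex hull, and hence the extreme points coming from $V_n$, realize every pattern $\sigma\in\{a,b\}^J$ up to the threshold: $\le a$ or $\ge b$ at each coordinate. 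In other words, $J$ is an independence set for the pair of closed sets $\{f\le a\}$, $\{f\ge b\}$ along the orbit. Such a set gives $2^{|J|}$ points of $X$ that are pairwise $(n,\eta')$-separated, where $\eta'$ comes from the uniform continuity of $f$. This yields $h_{top}(T)\ge\kappa\log 2>0$, contradicting $h_{top}(T)=0$.

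The delicate point is the passage from the convex hull to its extreme points. I would handle it by duality: separation of convex combinations is witnessed by linear functionals, i.e.\ signed weight vectors on coordinates, and the combinatorial dimension (fat-shattering dimension) of $V_n$ controls that of its convex hull up to a loss in the scale. This is the Talagrand / Mendelson--Vershynin bound on covering numbers via fat-shattering dimension, and I would quote it as the engine of the argument.
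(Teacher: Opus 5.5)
The paper does not prove this statement. It imports it from Glasner--Weiss and only remarks that the continuous-map version is used in Burguet--Shi, so your proposal has to stand on its own.

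\textbf{What is sound.} Your architecture is sound and is essentially that of Glasner--Weiss.
\begin{itemize}
\item The map $\mu\mapsto(\int f\circ T^j\,d\mu)_{j<n}$ sends $\mathscr M(X)$ onto $\mathrm{conv}(V_n)$.
\item Zero entropy makes the $\ell_\infty^n$-covering numbers of $V_n$ equal to $e^{o(n)}$ at each fixed scale.
\item The geometric statement you isolate is true; it is essentially the combinatorial lemma behind Glasner--Weiss's proof.
\item One small repair: the reduction to a single $f$ should be run on covering numbers, which are submultiplicative over the $L$ coordinate blocks. Pigeonholing separated pairs does not work, because the separating index depends on the pair.
\end{itemize}

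\textbf{The gaps.} That geometric statement is the entire content of the theorem, and your sketch of it has two genuine gaps.

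\emph{First gap: passing from the hull to $V_n$.} The claim ``the vectors of the convex hull, and hence the extreme points coming from $V_n$, realize every pattern'' is false. Take $V=\{(1,\tfrac12),(\tfrac12,1),(0,\tfrac12),(\tfrac12,0)\}$. Its hull realizes all four patterns at levels $a=0.3$, $b=0.7$, but every vertex has a coordinate equal to $\tfrac12$ and realizes none.
\begin{itemize}
\item Shattering by the hull only says, by Hahn--Banach, that $P_J\,\mathrm{conv}(V_n)$ contains a cube.
\item Getting back to $V_n$ costs a proportional shrinking of both $J$ and the margin, and this must be proved.
\item It can be proved. One route gives $\mathrm{fat}_\gamma(\mathrm{conv}\,F)\le C\gamma^{-2}\log(2/\gamma)\,\mathrm{fat}_{c\gamma}(F)$ by applying Mendelson--Vershynin in $L_2(\mu_J)$, with $\mu_J$ uniform on the shattered set, together with Maurey's lemma and a Varshamov--Gilbert packing. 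Mendelson--Vershynin by itself is only an $L_2$ covering bound and does not say this.
\item You must also pigeonhole the per-coordinate witnesses $s_j$ into $O(1/\gamma)$ bins to obtain a single pair $a<b$.
\end{itemize}

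\emph{Second gap: linear-size shattering of the hull.} The claim that exponential $\ell_\infty$-separation of the hull ``should force'' a shattered set of size $\kappa n$ is only asserted.
\begin{itemize}
\item The standard real-valued Sauer--Shelah lemma (Alon--Ben-David--Cesa-Bianchi--Haussler) loses a factor of order $\log n$ in $\ell_\infty^n$. It yields a shattered set of size only about $n/\log n$, which gives nothing here.
\item You need a genuinely log-free estimate, of Rudelson--Vershynin type or a Sauer--Shelah lemma for convex bodies.
\item That $\log n$ is exactly the loss you diagnosed in the empirical-measure approach, and the $\log(n+1)$ in Lemma~\ref{lem 5.3}. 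As written, the difficulty has been relocated, not overcome.
\end{itemize}

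\textbf{A cheaper self-contained route.} You can avoid $\ell_\infty^n$ altogether by averaging in time.
\begin{enumerate}
\item Put $\bar D_n(\mu,\nu)=\frac1n\sum_{j<n}D(T_*^j\mu,T_*^j\nu)$.
\item Drawing $k$ i.i.d.\ points from $\mu$ gives $\mathbb E\,\bar D_n(\mu,\mu_0)\le k^{-1/2}$, with no union bound over $j$.
\item Rounding the sample to an $(n,\delta_\epsilon)$-spanning set as in Lemma~\ref{lem 5.3} then gives $N(\mathscr M(X),\epsilon,\bar D_n)\le r_n(X,\delta_\epsilon)^{k(\epsilon)}=e^{o(n)}$. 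Here $k(\epsilon)=O(\epsilon^{-2})$ does not depend on $n$.
\item Fix an ergodic $T_*$-invariant $\mathbb P$ and a finite partition with $\mathbb P$-null boundary. By the ergodic theorem, typical points in a common $\bar D_n$-ball have names differing in only a small proportion of the first $n$ places. So each ball carries at most $e^{\theta n}$ typical names, with $\theta\to0$ as $\epsilon\to0$.
\item Shannon--McMillan--Breiman then gives $h_{\mathbb P}(T_*)\le\lim_{\epsilon\to0}\liminf_{n\to\infty}\frac1n\log N(\mathscr M(X),\epsilon,\bar D_n)=0$.
\item The variational principle yields $h_{top}(T_*)=0$.
\end{enumerate}
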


	Although the original result in \cite{GW95} was formulated for
	homeomorphisms, the continuous-map version stated above is explicitly
	used in \cite{BS25}.
	Theorem~\ref{thm 5.1} raises a natural question for general entropy scales:
	\emph{whether zero entropy scale is preserved by the induced action on the space of probability measures.}
	We first show that such a preservation property may
	fail without additional assumptions on the scaling family.

	\subsection{A counterexample to preservation of zero entropy scales}

	Although the zero entropy property is preserved in the classical setting of
	Glasner and Weiss, this need not remain true for general entropy scales. The
	following example illustrates the obstruction.
	\begin{example}\label{ex 5.2}
		Let $X_0=\mathbb Z\cup\{\infty\}$
		be the one-point compactification of $\mathbb Z$ equipped with a compatible metric $d$ on $X_0$, and define
		\begin{align*}
			T(k)=k+1,\qquad T(\infty)=\infty .
		\end{align*}
		Consider the scaling family $\{\scl_\alpha\}_{\alpha\geq0}$ defined by
		$\scl_0(t)=1$ and, for $\alpha>0$,
		\begin{align*}
			\scl_\alpha(t)
			=
			\begin{cases}
				|\log t|^{-(1+\alpha)}, & 0<t<e^{-1},\\
				1,                     & e^{-1}\leq t<1.
			\end{cases}
		\end{align*}
		Then one has
		\begin{align*}
			\overline h_{\scl}^{C}(T,X_0)=0, \text{ while } \quad \overline h_{\scl}^{C}
			(T_*,\mathscr M(X_0))
			=+\infty.
		\end{align*}
	\end{example}

	\begin{proof}
		We first verify that the above family satisfies the definition of a scaling.
		Indeed, for $\alpha>\beta>0$ and $\lambda>1$,
		\begin{align*}
			\frac{\scl_\alpha(t)}
			{\scl_\beta(t^\lambda)}
			=
			\lambda^{1+\beta}
			|\log t|^{-(\alpha-\beta)}
			\longrightarrow0.
		\end{align*}
		Moreover, if $\lambda>1$ is sufficiently close to $1$ so that
		$\lambda(1+\beta)<1+\alpha,$
		then
		\begin{align*}
			\frac{\scl_\alpha(t)}
			{\scl_\beta(t)^\lambda}
			=
			|\log t|^{\lambda(1+\beta)-(1+\alpha)}
			\longrightarrow0.
		\end{align*}
		Thus $\{\scl_\alpha\}_{\alpha\geq0}$ is a scaling.

		We next estimate the orbit complexity of $(X_0,T)$. Fix a sufficiently small
		$\epsilon>0$.
		On the one hand, we denote $	E_n=\{0,-1,\ldots,-(n-1)\}.$
		Since $\{0\}$ is open in $X_0$, there exists $\epsilon_0>0$
		such that
		\begin{align*}
			B_d(0,\epsilon_0)=\{0\}.
		\end{align*}
		Fix $0<\epsilon<\epsilon_0$. For any two distinct points
		$x=-i$ and $y=-j$ in $E_n$ with $i<j$, we obtain that
		\begin{align*}
			T^i(x)=T^i(-i)=0,\quad T^i(y)=T^i(-j)=i-j\neq0
		\end{align*}
		Therefore,
		$d_n(x,y)\geq
		d(T^i(x),T^i(y))=
		d(0,i-j)
		>
		\epsilon .$
		Thus $E_n$ is $(n,\epsilon)$-separated and $s_n(X_0,\epsilon)\geq n$.

		On the other hand, since $d$ is compatible with the topology of
		$X_0$, there exists a finite set
		$
		F_\epsilon=\{a_1,\ldots,a_{r(\epsilon)}\}\subset\mathbb Z$
		such that
		$
		\operatorname{diam}_d(X_0\setminus F_\epsilon)<\epsilon.$
		Let $E\subset X_0$ be an arbitrary $(n,\epsilon)$-separated set and
		decompose it as
		\begin{align*}
			E_0=
			\{x\in E:T^j(x)\notin F_\epsilon
			\text{ for all }0\leq j<n\},\quad
			E_1
			=
			E\setminus E_0.
		\end{align*}
		If $x,y\in E_0$, then $T^j(x),T^j(y)\in X_0\setminus F_\epsilon$
		for every $0\leq j<n$. Hence
		\begin{align*}
			d_n(x,y)
			&=
			\max_{0\leq j<n}d(T^j(x),T^j(y))
			<\epsilon.
		\end{align*}
		Since $E$ is $(n,\epsilon)$-separated, this implies that
		$|E_0|\leq1$.
		For every $x\in E_1$, there exist
		$a_i\in F_\epsilon$ and $0\leq j<n$ such that
		$T^j(x)=a_i$. Since $T(k)=k+1$ on $\mathbb Z$, it follows that
		$x=a_i-j$. Therefore,
		\begin{align*}
			E_1
			\subset
			\{a_i-j:1\leq i\leq r(\epsilon),\ 0\leq j<n\},
		\end{align*}
		and consequently $|E_1|\leq r(\epsilon)n$. Thus $	|E|
		\leq
		r(\epsilon)n+1$.
		Taking the supremum over all $(n,\epsilon)$-separated sets gives
		\begin{align*}
			s_n(X_0,\epsilon)\leq r(\epsilon)n+1.
		\end{align*}
		Consequently,
		\begin{align*}
			s_n(X_0,\epsilon)\scl_\alpha(e^{-n})
			&\leq
			\bigl(r(\epsilon)n+1\bigr)n^{-(1+\alpha)}\\
			&=
			r(\epsilon)n^{-\alpha}+n^{-(1+\alpha)}
			\longrightarrow0
		\end{align*}
		for every $\alpha>0$.
		Since $r(\epsilon)$ is finite, it follows that  $\overline{h}_{\scl}(T,X_0)=0$.

		It remains  to prove that
		\begin{align*}
			h_{\scl}^{C} (T_*,\mathscr M(X_0)) =+\infty.
		\end{align*}
		Since $\{0\}$ is clopen in $X_0$, the function
		$\varphi=\mathbf 1_{\{0\}}$ is continuous. Choose a countable dense subset
		$\{f_k\}_{k\geq1}$ of the unit ball of $C(X_0)$
		such that $f_1=\varphi$.  Define a compatible weak$^*$ metric on $\mathscr M(X_0)$ by
		\begin{align*}
			D(\mu,\nu)
			=
			\sum_{k=1}^{\infty}2^{-k}
			\left|
			\int f_k\,d\mu-\int f_k\,d\nu
			\right|.
		\end{align*}
		Then
		\begin{align*}
			D(\mu,\nu)
			\geq
			\frac12
			\left|
			\int\varphi\,d\mu-\int\varphi\,d\nu
			\right|.
		\end{align*}
		Fix $m\in\mathbb N$. For every
		$A\subset\{0,1,\ldots,n-1\}$ with $|A|=m$, define

		\begin{align*}
			\mu_A=\frac1m\sum_{j\in A}\delta_{-j}.
		\end{align*}
		We claim that the family
		$\mathcal F_n(m)=
		\left\{\mu_A:
		A\subset\{0,\ldots,n-1\}, |A|=m
		\right\}$
		is $(n,\frac{1}{2m})$-separated.

		Indeed, take two distinct elements
		$\mu_A,\mu_B\in\mathcal F_n(m)$. Then one has $A\neq B$ and $|A|=|B|=m$.
		Choose $j\in A\triangle B$ and assume $j\in A\setminus B$.
		Then
		\begin{align*}
			T_*^j\mu_A(\{0\})=	\int\varphi\,d(T_*^j\mu_A)=\frac1m ,
		\end{align*}
		while
		\begin{align*}
			T_*^j\mu_B(\{0\})=\int\varphi\,d(T_*^j\mu_B)=0.
		\end{align*}
		Therefore,
		\begin{align*}
			\left|
			\int\varphi\,d(T_*^j\mu_A)
			-
			\int\varphi\,d(T_*^j\mu_B)
			\right|
			=
			\frac1m .
		\end{align*}
		By the choice of the metric $D$,
		\begin{align*}
			D(T_*^j\mu_A,T_*^j\mu_B)
			\geq
			\frac1{2m}.
		\end{align*}
		Moreover,
		\begin{align*}
			D_n(\mu_A,\mu_B)=
			\max_{0\leq k<n}
			D(T_*^k\mu_A,T_*^k\mu_B)
			\geq
			D(T_*^j\mu_A,T_*^j\mu_B)
			\geq
			\frac1{2m}.
		\end{align*}
		Hence $\mathcal F_n$ is an
		$(n,1/(2m))$-separated set in $\mathscr M(X_0)$. Since  there exists $c_m>0$ such that
		\[
		\binom{n}{m}\geq c_m n^m
		\]
		for all sufficiently large $n$, we obtain
		\[
		s_n\bigl(\mathscr M(X_0),\epsilon_m\bigr)
		\scl_\alpha(e^{-n})
		\geq
		\binom{n}{m}n^{-(1+\alpha)}
		\geq
		c_m n^{m-1-\alpha},
		\]
		where $\epsilon_m:=\frac{1}{2m}$.
		Hence $\mathcal F_n(m)$ is an
		$(n,\epsilon_m)$-separated subset of $\mathscr M(X_0)$.
		Given any $\alpha>0$, choose an integer
		$	m>1+\alpha .$
		Then
		$	\limsup_{n\to\infty}
		s_n
		\left(
		\mathscr M(X_0),\frac1{2m}
		\right)
		\scl_\alpha(e^{-n})
		=
		+\infty .$
		Since $\alpha>0$ is arbitrary, by the definition of the upper capacity
		entropy scale, we obtain
		\begin{align*}
			\overline h_{\scl}^{C}
			(T_*,\mathscr M(X_0))
			=+\infty,
		\end{align*}
		which finishes the proof.
	\end{proof}

	The above example shows that the passage from a dynamical system to its
	space of probability measures may create additional orbit complexity at
	finer scales. Therefore, some control on the complexity growth of the
	induced system is required.

	\subsection{Covering estimates for induced measure systems}
	Since $\mathscr M(X)$ is compact in the weak$^*$ topology
	and $C(X)$ is separable, we may choose  a sequence
	$\{f_k\}_{k\geq1}$ in the unit ball of $C(X)$ such that
	\begin{align*}
		D(\mu,\nu)
		=
		\sum_{k=1}^{\infty}
		2^{-k}
		\left|
		\int f_k\,d\mu-\int f_k\,d\nu
		\right|
	\end{align*}
	generates the weak$^*$ topology.
	The following estimate compares the orbit complexity of the original system
	with that of the induced system on probability measures.

	\begin{lemma}
		\label{lem 5.3}
		Let $(X,d,T)$ be a TDS. For every $\epsilon>0$, there exist
		$\delta_\epsilon>0$, $C_\epsilon>0$, and
		$n_\epsilon\in\mathbb N$ such that
		\begin{align*}
			r_n(\mathscr M(X),D,\epsilon)
			\leq
			r_n(X,d,\delta_\epsilon)^{C_\epsilon\log(n+1)}
		\end{align*}
		holds	for all $n\geq n_\epsilon$.
	\end{lemma}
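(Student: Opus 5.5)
The plan follows the Glasner--Weiss approximation scheme: a measure is replaced by a "quantized" empirical measure supported on a spanning set of $X$, and the quantization is done with a number of atoms independent of $n$. The factor $\log(n+1)$ enters through the size of the coefficient grid, which must be fine enough to control all $n$ iterates simultaneously.

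\textbf{Step 1 (reduce $D$ to finitely many test functions).} Fix $\epsilon>0$ and choose $k_0$ with $\sum_{k>k_0}2^{-k}<\epsilon/4$. Then $D(\mu,\nu)<\epsilon$ holds as soon as $|\int f_k\,d\mu-\int f_k\,d\nu|<\epsilon/2$ for $k\le k_0$. By uniform continuity, choose $\delta_\epsilon>0$ such that $d(x,y)<2\delta_\epsilon$ implies $|f_k(x)-f_k(y)|<\epsilon/8$ for all $k\le k_0$.

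\textbf{Step 2 (transfer to a finite spanning set).} Let $F$ be a minimal $(n,\delta_\epsilon)$-spanning set of $X$, so $|F|=r:=r_n(X,d,\delta_\epsilon)$. Partition $X$ into Borel sets $A_z\subset B_n(z,\delta_\epsilon)$, $z\in F$. Send $\mu$ to $\hat\mu=\sum_z\mu(A_z)\delta_z$. For $0\le j<n$ and $k\le k_0$, $|\int f_k\circ T^j\,d\mu-\int f_k\circ T^j\,d\hat\mu|<\epsilon/8$, so $D_n(\mu,\hat\mu)$ is small. The problem thus reduces to covering the simplex of probability vectors $p=(p_z)_{z\in F}$, equipped with the pseudometric
\[
\rho_n(p,q)=\max_{j<n,\,k\le k_0}\Bigl|\sum_z(p_z-q_z)f_k(T^jz)\Bigr|.
\]

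\textbf{Step 3 (main obstacle: cover the simplex with few points).} A naive $\ell^1$-grid gives roughly $(C/\epsilon)^{r}$ points, which is far too many. The target is $r^{C\log(n+1)}$. I would use Maurey's empirical method. Sample $m$ i.i.d.\ points $Z_1,\dots,Z_m$ from $F$ with law $p$, and set $q=\frac1m\sum_i\delta_{Z_i}$. For each of the $nk_0$ bounded functionals $g_{j,k}=f_k\circ T^j$ (with $|g|\le1$), Hoeffding's inequality gives $\mathbb P(|\int g\,dq-\int g\,dp|>\epsilon/8)\le2e^{-m\epsilon^2/128}$. A union bound over the $nk_0$ functionals succeeds if $m=C'_\epsilon\log(n+1)$ with $C'_\epsilon$ depending only on $\epsilon$ and $k_0$; this forces $n\ge n_\epsilon$ to absorb constants. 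Hence every $p$ lies within $\rho_n$-distance $\epsilon/8$ of some empirical measure with $m$ atoms drawn from $F$. The number of such measures is at most $r^m=r^{C'_\epsilon\log(n+1)}$.

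\textbf{Step 4 (assemble).} Combining the errors from Steps 2 and 3 with the tail bound from Step 1 gives $D_n(\mu,q)<\epsilon$. Therefore the empirical measures form an $(n,\epsilon)$-spanning set of $\mathscr M(X)$, and $r_n(\mathscr M(X),D,\epsilon)\le r_n(X,d,\delta_\epsilon)^{C_\epsilon\log(n+1)}$. The degenerate case $r=1$ is trivial, since then $X$ is $\delta_\epsilon$-small along orbits and a single point suffices. I expect the delicate point to be Step 3: one must check that the approximation is uniform over all $j<n$ simultaneously, and this is exactly what produces the $\log(n+1)$ exponent.
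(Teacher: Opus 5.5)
Your argument is correct and is essentially the paper's proof. Both use Hoeffding's inequality with a union bound over the $qn$ functions $f_k\circ T^j$ to get an empirical measure with $m=O_\epsilon(\log(n+1))$ atoms on a minimal $(n,\delta_\epsilon)$-spanning set that is $D_n$-close to $\mu$. The only difference is the order: you first push $\mu$ onto the spanning set through a Borel partition subordinate to the Bowen balls and then sample, whereas the paper samples from $\mu$ itself and then moves each sampled atom to a $d_n$-close spanning point.

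One small point: your opening paragraph, with atoms ``independent of $n$'' and a ``coefficient grid'', does not describe what Step~3 does. The $\log(n+1)$ exponent comes from the number of atoms, exactly as in your Step~3 and in the paper.
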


	\begin{proof}
		Fix $\epsilon>0$. Choose $q=q(\epsilon)\in\mathbb N$ and
		$\eta=\eta(\epsilon)>0$ such that
		\begin{align}
			\label{equ 5.1}
			2\sum_{k>q}2^{-k}<\frac{\epsilon}{4},
			\qquad
			\eta\sum_{k=1}^{q}2^{-k}<\frac{\epsilon}{4}.
		\end{align}
		By the uniform continuity of $f_1,\ldots,f_q$, there exists
		$\delta_\epsilon>0$ such that
		\begin{align}
			\label{equ 5.2}
			d(x,y)<\delta_\epsilon
			\quad\Longrightarrow\quad
			|f_k(x)-f_k(y)|<\eta,
			\qquad 1\leq k\leq q.
		\end{align}

		Fix $n\in\mathbb N$ and $\mu\in\mathscr M(X)$. Let
		$x_1,\ldots,x_m$ be independent random variables with common distribution
		$\mu$, where
		\begin{align*}
			m
			=
			\left\lceil
			\frac{\log(4qn)}{c_\epsilon}
			\right\rceil,
			\qquad
			c_\epsilon:=\frac{\eta^2}{2}.
		\end{align*}
		For every
		\begin{align*}
			\psi\in
			\mathcal F_n
			:=
			\left\{
			f_k\circ T^j:
			1\leq k\leq q,\ 0\leq j<n
			\right\},
		\end{align*}
		Hoeffding's inequality gives
		\begin{align*}
			\mathbb P
			\left(
			\left|
			\frac1m\sum_{\ell=1}^{m}\psi(x_\ell)
			-
			\int\psi\,d\mu
			\right|>\eta
			\right)
			\leq
			2e^{-c_\epsilon m}.
		\end{align*}
		Since $|\mathcal F_n|\leq qn$, the union bound yields
		\begin{align*}
			\mathbb P
			\left(
			\max_{\psi\in\mathcal F_n}
			\left|
			\frac1m\sum_{\ell=1}^{m}\psi(x_\ell)
			-
			\int\psi\,d\mu
			\right|>\eta
			\right)
			\leq
			2qn e^{-c_\epsilon m}
			\leq\frac12.
		\end{align*}
		Hence there exists a realization $(x_1,\ldots,x_m)$ such that the empirical
		measure
		\begin{align*}
			\mu_0
			:=
			\frac1m\sum_{\ell=1}^{m}\delta_{x_\ell}
		\end{align*}
		satisfies
		\begin{align*}
			\left|
			\int f_k\circ T^j\,d\mu_0
			-
			\int f_k\circ T^j\,d\mu
			\right|
			\leq\eta
		\end{align*}
		for all $1\leq k\leq q$ and $0\leq j<n$. By
		\eqref{equ 5.1},
		\begin{align}
			\label{equ 5.3}
			D(T_*^j\mu,T_*^j\mu_0)
			<
			\frac{\epsilon}{2},
			\qquad
			0\leq j<n.
		\end{align}

		Let $E_n$ be a minimal $(n,\delta_\epsilon)$-spanning subset of $X$.
		For each $x_\ell$, choose $y_\ell\in E_n$ such that
		\begin{align*}
			d_n(x_\ell,y_\ell)<\delta_\epsilon,
		\end{align*}
		and set
		\begin{align*}
			\nu
			:=
			\frac1m\sum_{\ell=1}^{m}\delta_{y_\ell}.
		\end{align*}
		It follows from
		\eqref{equ 5.2} and
		\eqref{equ 5.1} that
		\begin{align}
			\label{equ 5.4}
			D(T_*^j\mu_0,T_*^j\nu)
			<
			\frac{\epsilon}{2},
			\qquad
			0\leq j<n.
		\end{align}
		Combining
		\eqref{equ 5.3} and
		\eqref{equ 5.4}, we obtain
		\begin{align*}
			D_n(\mu,\nu)<\epsilon.
		\end{align*}

		Thus $\mathscr M(X)$ is $(n,\epsilon)$-spanned by empirical measures
		supported on at most $m$ points of $E_n$. The number of such measures is at
		most
		\begin{align*}
			|E_n|^m
			=
			r_n(X,d,\delta_\epsilon)^m.
		\end{align*}
		Since \(q\) and \(c_\epsilon\) depend only on \(\epsilon\), there exist
		\(C_\epsilon>0\) and \(n_\epsilon\in\mathbb N\) such that
		\begin{align*}
			m\leq C_\epsilon\log(n+1)
		\end{align*}
		for every \(n\geq n_\epsilon\). Consequently,
		\begin{align*}
			r_n\bigl(\mathscr M(X),D,\epsilon\bigr)
			\leq
			r_n(X,d,\delta_\epsilon)^{
				C_\epsilon\log(n+1)},
		\end{align*}
		as required.
	\end{proof}

	The preceding lemma describes the complexity increase when passing from
	$(X,T)$ to $(\mathscr M(X),T_*)$. The only additional loss is the logarithmic
	factor arising from the construction of empirical measures. Thus, entropy
	scale invariance depends on whether the scaling family can absorb this loss,
	which leads to the following conditions on the scaling functions.

	\subsection{Preservation of zero entropy scales}
	\label{subsec 5.3}

	Recall the notation
	$
	a_\alpha(n)=-\log\scl_\alpha(e^{-n})$ and
	the set $\mathcal B_{\log}$.
	By the standing assumption in Subsection~\ref{subsec 2.1},
	$a_\alpha(n)\to+\infty$ for every $\alpha>0$. For convenience, we restate Condition~\ref{cond 2}
	before proving Theorem~\ref{thm 1.3}.
	\begin{condition2}[Hybrid logarithmic--linear zero-level condition]
		We say that the scaling family satisfies the
		\emph{hybrid logarithmic--linear zero-level condition} if either
		$\mathcal B_{\log}=\emptyset$, or
		$\mathcal B_{\log}\neq\emptyset$ and
		\begin{align*}
			\inf_{\gamma>0}
			\limsup_{n\to\infty}\frac{a_\gamma(n)}{n}
			=0,
			\qquad
			\liminf_{n\to\infty}\frac{a_\alpha(n)}{n}>0
			\quad
			\text{for every }\alpha\in\mathcal B_{\log}.
		\end{align*}
	\end{condition2}
	We proceed to prove Theorem~\ref{thm 1.3}.
	\begin{proof}[Proof of Theorem~\ref{thm 1.3}]
		We divide the proof into two steps.
		\begin{step1} We show
			\begin{align*}
				\overline h_{\scl}^{C}(T,X)=0
				\Longrightarrow
				\overline h_{\scl}^{C}
				(T_*,\mathscr M(X))=0.
			\end{align*}
		\end{step1} Suppose that
		$\overline h_{\scl}^{C}(T,X)=0.$ By Remark~\ref{rem 2.8}, we may use the spanning-number formulation.
		By the definition of the upper capacity entropy scale, for each
		$\beta>0$ and every $\eta>0$,
		$	\limsup_{n\to\infty}
		r_n(X,\eta)\scl_\beta(e^{-n})
		=0.$
		Then for any sufficiently large $n$, one can deduce that
		\begin{align}\label{equ 5.5}
			\log r_n(X,\eta)\leq a_\beta(n).
		\end{align}
		Fix $\alpha>0$ and $\epsilon>0$. We distinguish two
		cases.
		\begin{case1}
			$\alpha\notin\mathcal B_{\log}$.
		\end{case1}
		There exists \(\beta\in(0,\alpha)\) such that
		\begin{align}
			\label{equ 5.6}
			\lim_{n\to\infty}
			\frac{a_\beta(n)\log(n+1)}
			{a_\alpha(n)}
			=0.
		\end{align}
		Fix \(\epsilon>0\). By the covering estimate established
		above, there exist \(\delta=\delta(\epsilon)>0\),
		\(C_\epsilon>0\), and \(n_\epsilon\in\mathbb N\)
		such that
		\begin{align*}
			r_n(\mathscr M(X),D,\epsilon)
			\leq
			r_n(X,d,\delta_\epsilon)^{C_\epsilon\log(n+1)}
		\end{align*}
		for every \(n\geq n_\epsilon\). By
		\eqref{equ 5.5},
		\begin{align*}
			\log r_n\bigl(\mathscr M(X),\epsilon\bigr)
			\leq
			C_\epsilon a_\beta(n)\log(n+1).
		\end{align*}
		Set
		\begin{align*}
			\theta_n
			:=
			C_\epsilon
			\frac{a_\beta(n)\log(n+1)}
			{a_\alpha(n)}.
		\end{align*}
		By \eqref{equ 5.6}, one has
		\(\theta_n\to0\). Hence, for all sufficiently large \(n\),
		\begin{align*}
			\log r_n\bigl(\mathscr M(X),\epsilon\bigr)
			\leq
			\theta_n a_\alpha(n).
		\end{align*}
		We obtain that
		\begin{align*}
			r_n\bigl(\mathscr M(X),\epsilon\bigr)
			\scl_\alpha(e^{-n})
			=e^{\log r_n\bigl(\mathscr M(X),\epsilon\bigr)
				-a_\alpha(n)}\leq e^{	-(1-\theta_n)a_\alpha(n)}
			.
		\end{align*}
		Since \(\theta_n\to0\) and \(a_\alpha(n)\to+\infty\), it
		follows that
		\begin{align}\label{equ 5.7}
			\limsup_{n\to\infty}
			r_n\bigl(\mathscr M(X),\epsilon\bigr)
			\scl_\alpha(e^{-n})
			=0.
		\end{align}

		\begin{case2}
			$\alpha\in\mathcal B_{\log}$.
		\end{case2}

		Fix \(\delta>0\). By the first part of Condition
		\ref{cond 2}, there exists
		\(\gamma>0\) such that
		$\limsup_{n\to\infty}
		\frac{a_\gamma(n)}{n}<\delta.$
		For every \(\eta>0\), by
		\eqref{equ 5.5}, one has
		\begin{align*}
			\limsup_{n\to\infty}
			\frac{\log r_n(X,\eta)}{n}
			\leq
			\limsup_{n\to\infty}
			\frac{a_\gamma(n)}{n}
			<\delta.
		\end{align*}
		Letting \(\eta\to0\) and $\delta\to0$,  it follows that $h_{top}(T,X)=0$.
		By Theorem~\ref{thm 5.1}, one has
		\begin{align*}
			h_{top}
			\bigl(T_*,\mathscr M(X)\bigr)=0.
		\end{align*}

		Fix \(\epsilon>0\). Since
		\(\alpha\in\mathcal B_{\log}\), the second part of
		Condition
		\ref{cond 2} gives
		\begin{align*}
			\liminf_{n\to\infty}
			\frac{a_\alpha(n)}{n}>0.
		\end{align*}
		Hence there exist \(c_\alpha>0\) and
		\(N_\alpha\in\mathbb N\) such that
		$	a_\alpha(n)\geq c_\alpha n$
		for every \(n\geq N_\alpha\).
		Since
		for every fixed \(\epsilon>0\),
		$\limsup_{n\to\infty}
		\frac{
			\log r_n\bigl(\mathscr M(X),\epsilon\bigr)}
		{n}
		=0,$ it implies
		\begin{align*}
			0\leq
			\limsup_{n\to\infty}
			\frac{
				\log r_n\bigl(\mathscr M(X),\epsilon\bigr)}
			{a_\alpha(n)}\leq
			\frac{1}{c_\alpha}
			\limsup_{n\to\infty}
			\frac{
				\log r_n\bigl(\mathscr M(X),\epsilon\bigr)}
			{n}=0.
		\end{align*}
		It follows that, for all sufficiently large \(n\),
		$\log r_n\bigl(\mathscr M(X),\epsilon\bigr)
		\leq
		\frac{1}{2}a_\alpha(n).$
		Therefore,
		\begin{align*}
			r_n\bigl(\mathscr M(X),\epsilon\bigr)
			\scl_\alpha(e^{-n})
			=e^{\log r_n\bigl(\mathscr M(X),\epsilon\bigr)
				-a_\alpha(n)}
			\leq e^{-\frac{1}{2}a_\alpha(n)}.
		\end{align*}
		Since \(a_\alpha(n)\to+\infty\), we conclude that
		\begin{align}
			\label{equ 5.8}
			\limsup_{n\to\infty}
			r_n\bigl(\mathscr M(X),\epsilon\bigr)
			\scl_\alpha(e^{-n})
			=0.
		\end{align}

		Since \(\alpha>0\) and \(\epsilon>0\) were arbitrary,
		\eqref{equ 5.7} and
		\eqref{equ 5.8}, together with the definition
		of the upper capacity entropy scale, imply that
		$\overline h_{\scl}^{C}
		(T_*,\mathscr M(X))=0.$

		\begin{step2} We show
			\begin{align*}
				\overline h_{\scl}^{C}
				(T_*,\mathscr M(X))=0 \Longrightarrow	\overline h_{\scl}^{C}(T,X)=0
				.
			\end{align*}
		\end{step2}
		Suppose that
		$\overline h_{\scl}^{C}
		(T_*,\mathscr M(X))=0.$
		Consider the Dirac embedding
		\begin{align*}
			\iota:X&\longrightarrow\mathscr M(X),\\
			x&\longmapsto\delta_x.
		\end{align*}
		It is a homeomorphism from \(X\) onto the compact subset
		\(\iota(X)\subset\mathscr M(X)\), and it satisfies
		\begin{align*}
			\iota\circ T=T_*\circ\iota.
		\end{align*}
		Since
		\(\iota^{-1}:\iota(X)\to X\) is uniformly continuous, for every
		\(\eta>0\), there exists \(\varepsilon>0\) such that
		\begin{align}
			\label{equ 5.9}
			D(\delta_x,\delta_y)<\varepsilon
			\quad\Longrightarrow\quad
			d(x,y)<\eta
		\end{align}
		for all \(x,y\in X\).

		We claim that every \((n,\eta)\)-separated subset of \(X\) is
		mapped by \(\iota\) to an \((n,\varepsilon)\)-separated subset of
		\(\mathscr M(X)\).

		Indeed, if \(x,y\in X\) satisfy $d_n(x,y)\geq\eta,$
		then there exists \(0\leq j<n\) such that
		$d(T^jx,T^jy)\geq\eta.$
		By the contrapositive of
		\eqref{equ 5.9}, we can deduce that $D\bigl(\delta_{T^jx},\delta_{T^jy}\bigr)
		\geq\varepsilon.$
		Since
		\begin{align*}
			T_*^j\delta_x=\delta_{T^jx},
		\end{align*}
		we obtain $	D_n(\delta_x,\delta_y)\geq\varepsilon.$
		Consequently,
		\begin{align}
			\label{equ 5.10}
			s_n(X,\eta)
			\leq
			s_n\bigl(\mathscr M(X),\varepsilon\bigr).
		\end{align}

		Now fix \(\alpha>0\). By
		\eqref{equ 5.10},
		\begin{align*}
			0
			\leq
			\limsup_{n\to\infty}
			s_n(X,\eta)\scl_\alpha(e^{-n})
			\leq
			\limsup_{n\to\infty}
			s_n\bigl(\mathscr M(X),\varepsilon\bigr)
			\scl_\alpha(e^{-n})
			=0.
		\end{align*}
		Since \(\alpha>0\) and \(\eta>0\) were arbitrary, it follows from
		the definition of the upper capacity entropy scale that
		\begin{align*}
			\overline h_{\scl}^{C}(T,X)=0.
		\end{align*}
		This proves the reverse implication and completes the proof of
		Theorem~\ref{thm 1.3}.
	\end{proof}

	Condition~\ref{cond 2} extends the logarithmic separation
	requirement, which is equivalent to $\mathcal B_{\log}=\emptyset$.
	For the classical scaling $a_\alpha(n)=\alpha n$, logarithmic separation
	fails and $\mathcal B_{\log}=(0,+\infty)$. Nevertheless,
	\begin{align*}
		\inf_{\gamma>0}
		\limsup_{n\to\infty}\frac{a_\gamma(n)}{n}
		=0,
		\qquad
		\liminf_{n\to\infty}\frac{a_\alpha(n)}{n}
		=\alpha>0,
	\end{align*}
	so the hybrid condition holds. Hence Theorem~\ref{thm 1.3} recovers Theorem~\ref{thm 5.1}.

	The following example shows that the two mechanisms in the hybrid
	condition may occur within the same scaling family.

	\begin{example}
		Consider the scaling family
		\[
		\scl_\alpha(\epsilon)=
		\begin{cases}
			\epsilon^\alpha, & 0<\alpha\leq1,\\[1mm]
			e^{-\bigl(\log(1/\epsilon)\bigr)^\alpha},
			& \alpha>1.
		\end{cases}
		\]
		Then
		\[
		a_\alpha(n)=-\log\scl_\alpha(e^{-n})
		=
		\begin{cases}
			\alpha n, & 0<\alpha\leq1,\\
			n^\alpha, & \alpha>1.
		\end{cases}
		\]
		If \(0<\alpha\leq1\), then for every
		\(\beta\in(0,\alpha)\),
		\begin{align*}
			\frac{a_\beta(n)\log(n+1)}
			{a_\alpha(n)}
			=
			\frac{\beta}{\alpha}\log(n+1)
			\longrightarrow+\infty.
		\end{align*}
		If \(\alpha>1\), taking \(\beta=1\) gives
		\begin{align*}
			\frac{a_1(n)\log(n+1)}
			{a_\alpha(n)}
			=
			\frac{n\log(n+1)}{n^\alpha}
			\longrightarrow0.
		\end{align*}
		Consequently, $\mathcal B_{\log}=(0,1].$
		Moreover,
		\begin{align*}
			\inf_{\gamma>0}
			\limsup_{n\to\infty}
			\frac{a_\gamma(n)}{n}
			=0, \text{ and }
			\liminf_{n\to\infty}
			\frac{a_\alpha(n)}{n}
			=\alpha>0.
		\end{align*}
		for each \(\alpha\in\mathcal B_{\log}\).
		Thus the hybrid logarithmic--linear zero-level condition holds.
		The parameters in \((0,1]\) are handled by the
		Glasner--Weiss theorem, whereas those in \((1,+\infty)\) are
		handled by the logarithmic covering estimate.
	\end{example}

	\begin{remark}
		\label{rem 5.5}
		The standard logarithmic scaling
		$\scl_\alpha(\epsilon)
		=
		\left(\log\frac1\epsilon\right)^{-\alpha}$
		is not covered by the hybrid condition. Indeed,
		$a_\alpha(n)=\alpha\log n$, so for every $0<\beta<\alpha$,
		\begin{align*}
			\frac{a_\beta(n)\log(n+1)}{a_\alpha(n)}
			\longrightarrow+\infty,
			\qquad
			\frac{a_\alpha(n)}{n}
			\longrightarrow0.
		\end{align*}
		Thus $\mathcal B_{\log}=(0,+\infty)$, but none of its levels
		dominates the linear scale. Moreover, the Glasner--Weiss theorem gives
		only $\log r_n\bigl(\mathscr M(X),\epsilon\bigr)=o(n),$
		which is insufficient to obtain an $o(\log n)$ estimate.
		Example~\ref{ex 5.2} shows that zero-level invariance may fail for
		the shifted logarithmic scaling
		$\scl_\alpha(e^{-n})=n^{-(1+\alpha)}$; the standard normalization
		$\scl_\alpha(e^{-n})=n^{-\alpha}$ remains outside the scope of the
		present argument.
	\end{remark}

	\subsection{Positive entropy scales for induced measure systems}
	\label{subsec 5.4}

	We now investigate when positivity of the entropy scale of the original
	system forces the induced entropy scale to be infinite. The main observation
	is that the induced measure system contains equivariant copies of all finite
	Cartesian powers of the original system.

	We first record the monotonicity of upper capacity entropy scales under
	topological embeddings.

	\begin{lemma}
		\label{lem 5.6}
		Suppose that $(Y,S)$ is topologically conjugate to a subsystem of
		$(Z,R)$. Then
		\begin{align*}
			\overline h_{\scl}^{C}(S,Y)
			\leq
			\overline h_{\scl}^{C}(R,Z).
		\end{align*}
	\end{lemma}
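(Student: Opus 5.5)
The argument reduces to two facts: upper capacity entropy scales are preserved under conjugacy, and they are monotone under passing to a closed invariant subset. Let $\phi:Y\to Y_0$ be the conjugacy onto a closed $R$-invariant subset $Y_0\subset Z$ with $\phi\circ S=R\circ\phi$. The proof of Step 2 of Theorem~\ref{thm 1.3} already treats the special case of the Dirac embedding, and we follow the same pattern.

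\emph{Step 1: separated sets in $Y$ map to separated sets in $Z$.} Since $Y$ is compact, $\phi^{-1}:Y_0\to Y$ is uniformly continuous. Hence for every $\eta>0$ there is $\varepsilon>0$ such that, for $y,y'\in Y$,
\[
d_Z(\phi y,\phi y')<\varepsilon \;\Longrightarrow\; d_Y(y,y')<\eta .
\]
Let $F\subset Y$ be $(n,\eta)$-separated, and take distinct $y,y'\in F$. Then $d_Y(S^jy,S^jy')\geq\eta$ for some $0\le j<n$. Equivariance gives $R^j\phi y=\phi S^jy$, so $d_Z(R^j\phi y,R^j\phi y')\geq\varepsilon$. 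Thus $\phi(F)\subset Z$ is $(n,\varepsilon)$-separated, and $\phi$ is injective, so
\[
s_n(Y,\eta)\le s_n(Z,\varepsilon).
\]

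\emph{Step 2: pass to the limits.} Fix $\alpha>0$ and multiply by $\scl_\alpha(e^{-n})$. Then
\[
\limsup_{n\to\infty} s_n(Y,\eta)\,\scl_\alpha(e^{-n}) \le \limsup_{n\to\infty} s_n(Z,\varepsilon)\,\scl_\alpha(e^{-n}) \le \overline\Lambda_{\scl_\alpha}(R,Z).
\]
The last inequality holds because $\epsilon\mapsto s_n(Z,\epsilon)$ is non-increasing, so the quantity at any fixed $\varepsilon$ is bounded by its limit as $\varepsilon\to0$. Letting $\eta\to0$ gives $\overline\Lambda_{\scl_\alpha}(S,Y)\le\overline\Lambda_{\scl_\alpha}(R,Z)$.

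\emph{Step 3: compare critical values.} Suppose $\alpha>\overline h_{\scl}^{C}(R,Z)$. Then $\overline\Lambda_{\scl_\alpha}(R,Z)=0$, so by Step 2 also $\overline\Lambda_{\scl_\alpha}(S,Y)=0$. By Definition~\ref{def 2.7} this means $\overline h_{\scl}^{C}(S,Y)\le\alpha$. Letting $\alpha$ decrease to $\overline h_{\scl}^{C}(R,Z)$ gives the claim. The case $\overline h_{\scl}^{C}(R,Z)=+\infty$ is trivial.

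The only step needing care is Step 1, where separation in $Z$ must be measured with the ambient metric $d_Z$ rather than the metric on $Y$. Uniform continuity of $\phi^{-1}$ on the compact image handles this. If one prefers to avoid that argument, Proposition-type metric invariance (as in Proposition~\ref{prop 2.6}(3), whose argument carries over verbatim to capacity) lets us transport $d_Z|_{Y_0}$ to $Y$. Then $s_n(Y,\eta)=s_n(Y_0,\eta)\le s_n(Z,\eta)$ follows directly from $Y_0\subset Z$.
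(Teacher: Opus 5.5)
Your proposal is correct and follows the paper's proof essentially verbatim. Both arguments use uniform continuity of $\phi^{-1}$ on the compact image to get $s_n(Y,\eta)\le s_n(Z,\varepsilon)$, then multiply by $\scl_\alpha(e^{-n})$ and pass to limits; your Steps 2 and 3 simply spell out that limit passage, using monotonicity of $s_n(Z,\cdot)$ in the radius, which the paper leaves implicit.
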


	\begin{proof}
		Let $\phi:Y\to Z$ be a topological embedding satisfying
		$\phi\circ S=R\circ\phi$. Since $Y$ is compact,
		$\phi^{-1}:\phi(Y)\to Y$ is uniformly continuous. Hence, for every
		$\epsilon>0$, there exists $\delta>0$ such that
		\begin{align*}
			d_Z(\phi(x),\phi(y))<\delta
			\quad\Longrightarrow\quad
			d_Y(x,y)<\epsilon.
		\end{align*}
		Consequently, the image under $\phi$ of every
		$(n,\epsilon)$-separated subset of $Y$ is
		$(n,\delta)$-separated in $Z$. Therefore,
		\begin{align*}
			s_n(Y,\epsilon)
			\leq
			s_n(Z,\delta)
		\end{align*}
		for every $n\in\mathbb N$. Multiplying by
		$\scl_\alpha(e^{-n})$, and then taking the corresponding limits, gives
		the desired inequality.
	\end{proof}

	The following weighted Dirac construction embeds each finite Cartesian
	power of $X$ into $\mathscr M(X)$.

	\begin{lemma}
		\label{lem 5.7}
		For every integer $m\geq2$, the product system
		$(X^m,T^{\times m})$ is topologically conjugate to a subsystem of
		$(\mathscr M(X),T_*)$.
	\end{lemma}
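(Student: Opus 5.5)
Fix distinct weights $w_1,\dots,w_m>0$ with $\sum_i w_i=1$, chosen so that all subset sums $\sum_{i\in A}w_i$, $A\subset\{1,\dots,m\}$, are distinct; for instance $w_i=2^{i-1}/(2^m-1)$. Define
\begin{align*}
\Phi:X^m\to\mathscr M(X),\qquad \Phi(x_1,\dots,x_m)=\sum_{i=1}^m w_i\delta_{x_i}.
\end{align*}
We will show that $\Phi$ is a topological embedding intertwining $T^{\times m}$ and $T_*$; then $\Phi(X^m)$ is a closed $T_*$-invariant subset and the claim follows.

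Equivariance is immediate, since $T_*\delta_x=\delta_{Tx}$ and $T_*$ is linear on measures:
\begin{align*}
T_*\Phi(x_1,\dots,x_m)=\sum_i w_i\delta_{Tx_i}=\Phi\bigl(T^{\times m}(x_1,\dots,x_m)\bigr).
\end{align*}
Continuity also holds: for every $f\in C(X)$, the map $(x_1,\dots,x_m)\mapsto\sum_i w_if(x_i)$ is continuous, so $\Phi$ is continuous into the weak$^*$ topology. Because $X^m$ is compact and $\mathscr M(X)$ is Hausdorff, a continuous injection is a homeomorphism onto its image. The image is compact, hence closed, and it is $T_*$-invariant by equivariance. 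So everything reduces to injectivity.

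Injectivity is the only real step, and the subset-sum condition handles it. Suppose $\Phi(x)=\Phi(y)$, and take any point $z$ in the common support. The mass of $z$ is $\sum_{i:x_i=z}w_i=\sum_{j:y_j=z}w_j$, and distinct subsets have distinct sums, so $\{i:x_i=z\}=\{j:y_j=z\}$. Every index $i$ has $x_i$ in the support, so applying this to each support point gives $x_i=y_i$ for all $i$. This is where the weights must be chosen carefully. With equal weights, $\Phi$ would only be injective modulo permutations of coordinates, and we would not get a conjugacy with $X^m$.
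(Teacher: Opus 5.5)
Your proposal is correct and matches the paper's proof: both use the weights $w_i=2^{i-1}/(2^m-1)$, the map $\Phi_m(x_1,\ldots,x_m)=\sum_i w_i\delta_{x_i}$, equivariance and weak$^*$ continuity, injectivity from comparing the masses of singletons via uniqueness of binary subset sums, and the fact that a continuous injection from a compact space into a Hausdorff space is a homeomorphism onto its compact, $T_*$-invariant image. Your remark that equal weights would only give injectivity modulo permutations of coordinates correctly explains why distinct subset sums are needed.
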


	\begin{proof}
		For $1\leq i\leq m$, set $	w_i=\frac{2^{i-1}}{2^m-1}.$
		Define
		\begin{align*}
			\Phi_m:X^m&\longrightarrow\mathscr M(X),\\
			\Phi_m(x_1,\ldots,x_m)
			&=
			\sum_{i=1}^{m}w_i\delta_{x_i}.
		\end{align*}
		The map $\Phi_m$ is continuous and equivariant: $T_*\circ\Phi_m
		=
		\Phi_m\circ T^{\times m}.$

		We show that $\Phi_m$ is injective. Suppose that
		$\sum_{i=1}^{m}w_i\delta_{x_i}
		=
		\sum_{i=1}^{m}w_i\delta_{y_i}.$
		For every $z\in X$, evaluating both measures at the singleton
		$\{z\}$ gives
		\begin{align*}
			\sum_{\{i:x_i=z\}}2^{i-1}
			=
			\sum_{\{i:y_i=z\}}2^{i-1}.
		\end{align*}
		By uniqueness of binary expansions of integers, one has
		$\{i:x_i=z\}
		=
		\{i:y_i=z\}.$
		This holds for every $z\in X$, and hence $x_i=y_i$ for every
		$1\leq i\leq m$. Thus $\Phi_m$ is injective.

		Since $X^m$ is compact and $\mathscr M(X)$ is Hausdorff, $\Phi_m$ is a
		homeomorphism from $X^m$ onto its image. Moreover,
		$\Phi_m(X^m)$ is compact and $T_*$-invariant. Therefore,
		$(X^m,T^{\times m})$ is conjugate to the subsystem
		$\left(
		\Phi_m(X^m),
		T_*|_{\Phi_m(X^m)}
		\right).$
	\end{proof}

	We can now prove Theorem~\ref{thm 1.4}.

	\begin{proof}[Proof of Theorem~\ref{thm 1.4}]
		Assume that
		$\overline h_{\scl}^{C}(T,X)>0.$
		We prove that the entropy scale of the induced system exceeds every
		$\gamma>0$.

		Fix $\gamma>0$ and choose
		$	0<\alpha<
		\min
		\left\{
		\gamma,
		\overline h_{\scl}^{C}(T,X)
		\right\}.$
		By Condition~\ref{cond 3}, there exist an
		integer $m\geq2$, a constant $c>0$, and $n_0\in\mathbb N$ such that
		\begin{align}
			\label{equ 5.11}
			\scl_\gamma(e^{-n})
			\geq
			c\,\scl_\alpha(e^{-n})^m
		\end{align}
		for every $n\geq n_0$.

		Equip $X^m$ with the maximum product metric
		\begin{align*}
			d^{(m)}
			\bigl(
			(x_1,\ldots,x_m),
			(y_1,\ldots,y_m)
			\bigr)
			=
			\max_{1\leq i\leq m}d(x_i,y_i).
		\end{align*}
		If $E\subset X$ is $(n,\epsilon)$-separated, then $E^m$ is
		$(n,\epsilon)$-separated in $X^m$. Hence
		\begin{align}
			\label{equ 5.12}
			s_n(X^m,d^{(m)},\epsilon)
			\geq
			s_n(X,d,\epsilon)^m.
		\end{align}
		Combining
		\eqref{equ 5.11} and
		\eqref{equ 5.12}, we obtain
		\begin{align*}
			s_n(X^m,d^{(m)},\epsilon)
			\scl_\gamma(e^{-n})\geq
			c
			\left(
			s_n(X,d,\epsilon)
			\scl_\alpha(e^{-n})
			\right)^m
		\end{align*}
		for every sufficiently large $n$. Consequently,
		\begin{align*}
			&\limsup_{n\to\infty}
			s_n(X^m,d^{(m)},\epsilon)
			\scl_\gamma(e^{-n})\geq
			c
			\left(
			\limsup_{n\to\infty}
			s_n(X,d,\epsilon)
			\scl_\alpha(e^{-n})
			\right)^m.
		\end{align*}
		Since
		$\alpha<\overline h_{\scl}^{C}(T,X)$, one has $\overline\Lambda_{\scl_\alpha}(T,X)
		=+\infty.$
		Letting $\epsilon\to0$ in the preceding estimate gives
		$\overline\Lambda_{\scl_\gamma}
		(T^{\times m},X^m)
		=+\infty.$
		Therefore,
		\begin{align*}
			\overline h_{\scl}^{C}
			(T^{\times m},X^m)
			\geq\gamma.
		\end{align*}

		By Lemmas~\ref{lem 5.6} and
		\ref{lem 5.7},
		$	\overline h_{\scl}^{C}
		(T_*,\mathscr M(X))
		\geq
		\overline h_{\scl}^{C}
		(T^{\times m},X^m)
		\geq\gamma.$
		Since $\gamma>0$ is arbitrary, it follows that $\overline h_{\scl}^{C}
		(T_*,\mathscr M(X))
		=+\infty.$
	\end{proof}

	The next example shows that positivity need not imply infinity when Condition~\ref{cond 3} fails.

	\begin{proposition}
		\label{prop 5.8}
		Let $(\Sigma_m,\sigma)
		=
		\left(
		\{1,\ldots,m\}^{\mathbb N},
		\sigma
		\right),$ $(m\geq2)$
		be the one-sided full shift. Consider the order scaling $\scl_\alpha(t)
		=e^{-\left(\log\frac1t\right)^\alpha}$.
		Then
		\begin{align*}
			\overline h_{\scl}^{C}
			(\sigma,\Sigma_m)
			=
			\overline h_{\scl}^{C}
			(\sigma_*,\mathscr M(\Sigma_m))
			=1.
		\end{align*}
		In particular, the induced system has a finite positive entropy scale.
	\end{proposition}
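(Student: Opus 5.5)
The plan is to compute both entropy scales by comparing separated/spanning numbers with the normalisation $a_\alpha(n)=-\log\scl_\alpha(e^{-n})=n^\alpha$. This reduces the statement to two growth estimates. For the shift itself I use the exact count of cylinders. For the induced system I use the empirical-measure covering estimate of Lemma~\ref{lem 5.3}. Proposition~\ref{prop 4.4}(4) has already checked that the order scaling is a scaling, so the definitions apply.

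\emph{Step 1: the base system.} Use the standard metric $d(x,y)=2^{-\min\{i:x_i\neq y_i\}}$ on $\Sigma_m$. For $\epsilon=2^{-k}$, two points are $(n,\epsilon)$-close exactly when they agree on their first $n+k$ coordinates, up to a bounded shift in $k$. Hence
\[
m^{\,n+k-1}\le r_n(\Sigma_m,\epsilon)\le s_n(\Sigma_m,\epsilon)\le m^{\,n+k+1},
\]
and therefore $s_n(\Sigma_m,\epsilon)\scl_\alpha(e^{-n})=\exp\bigl((n+O_k(1))\log m-n^\alpha\bigr)$. If $\alpha<1$ this tends to $+\infty$ for every small $\epsilon$, so $\overline\Lambda_{\scl_\alpha}=+\infty$. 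If $\alpha>1$ it tends to $0$ for every $\epsilon$, so $\overline\Lambda_{\scl_\alpha}=0$. Thus $\overline h_{\scl}^{C}(\sigma,\Sigma_m)=1$.

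\emph{Step 2: lower bound for the induced system.} The Dirac embedding $x\mapsto\delta_x$ conjugates $(\Sigma_m,\sigma)$ to a subsystem of $(\mathscr M(\Sigma_m),\sigma_*)$; this is the argument of Step~2 in the proof of Theorem~\ref{thm 1.3}. Lemma~\ref{lem 5.6} then gives $\overline h_{\scl}^{C}(\sigma_*,\mathscr M(\Sigma_m))\ge 1$.

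\emph{Step 3: upper bound for the induced system.} Fix $\alpha>1$ and $\epsilon>0$. By Lemma~\ref{lem 5.3} there are $\delta_\epsilon$, $C_\epsilon$ and $n_\epsilon$ such that, for $n\ge n_\epsilon$,
\[
\log r_n\bigl(\mathscr M(\Sigma_m),D,\epsilon\bigr)\le C_\epsilon\log(n+1)\,\log r_n(\Sigma_m,\delta_\epsilon)\le C_\epsilon\log(n+1)\,(n+k_{\delta_\epsilon}+1)\log m,
\]
where the second inequality is Step~1. The right-hand side is $O(n\log n)=o(n^\alpha)$. Hence $r_n(\mathscr M(\Sigma_m),\epsilon)\,e^{-n^\alpha}\to0$ for every $\epsilon$. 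By Remark~\ref{rem 2.8} this gives $\overline\Lambda_{\scl_\alpha}(\sigma_*,\mathscr M(\Sigma_m))=0$, so the induced scale is at most $\alpha$. Letting $\alpha\downarrow1$ and combining with Step~2 yields the value $1$.

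There is no serious obstacle here. The one point needing care is Step~3: the $\log(n+1)$ loss from Lemma~\ref{lem 5.3} is absorbed only because every parameter $\alpha>1$ grows superlinearly. This is exactly where the linear regime $\alpha\le1$, in which Condition~\ref{cond 3} fails, would not suffice. I would also note explicitly that the metric-independence of the capacity scale lets us use the specific weak$^*$ metric $D$ of Lemma~\ref{lem 5.3}.
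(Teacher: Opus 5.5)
Your proof is correct. Steps~1 and~2 agree with the paper in substance. The paper gets its lower bound from an $(n,\epsilon_0)$-separated set of size $m^n$ together with the Dirac embedding and Lemma~\ref{lem 5.6}, while you use exact cylinder counts; the two are interchangeable.

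The real difference is the upper bound for $\mathscr M(\Sigma_m)$. The paper does not use Lemma~\ref{lem 5.3} at all. It observes that for any system $(Y,S)$ on a compact metric space, take a finite open cover $\mathcal U_\epsilon$ by sets of diameter $<\epsilon$. The join $\bigvee_{j=0}^{n-1}S^{-j}\mathcal U_\epsilon$ then gives $r_n(Y,\epsilon)\le|\mathcal U_\epsilon|^n$. Hence $s_n(Y,2\epsilon)e^{-n^\alpha}\to0$ for every $\alpha>1$, so $\overline h^{C}_{\scl}\le1$ for \emph{every} TDS under the order scaling. This applies to $\Sigma_m$ and $\mathscr M(\Sigma_m)$ alike.

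This route is more elementary than yours and also sharper. For fixed $\epsilon$ it gives $\log r_n(\mathscr M(\Sigma_m),\epsilon)=O(n)$. Lemma~\ref{lem 5.3} combined with the cylinder count only gives $O(n\log n)$, because the base has positive entropy and the empirical-measure estimate multiplies its linear growth by $\log(n+1)$. Your argument still closes, since $n\log n=o(n^\alpha)$ for $\alpha>1$. However, the logarithmic loss you single out as the delicate point is self-inflicted. The general bound also makes it unnecessary to appeal to metric-independence for the particular weak$^*$ metric $D$.

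Your closing remark linking this to Condition~\ref{cond 3} is also off. For the order scaling, Condition~\ref{cond 3} fails for every pair $0<\alpha<\gamma$, not only in a ``linear regime''. The induced scale stays at $1$ simply because no TDS exceeds exponential orbit growth. Lemma~\ref{lem 5.3} is the right tool for zero-entropy bases, as in Theorem~\ref{thm 1.3}, not here.
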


	\begin{proof}
		We first note that the finite-power amplification condition fails.
		Indeed, if $\gamma>\alpha>0$, then for every fixed $k\in\mathbb N$,
		\begin{align*}
			\frac{\scl_\gamma(t)}
			{\scl_\alpha(t)^k}
			=e^{-\left(\log\frac1t\right)^\gamma+
				k\left(\log\frac1t\right)^\alpha}
			\longrightarrow0
		\end{align*}
		as $t\to0^+$.

		We next establish a general upper estimate. Let $(Y,S)$ be an arbitrary
		continuous dynamical system on a compact metric space. Fix
		$\epsilon>0$ and choose a finite open cover
		$\mathcal U_\epsilon$ of $Y$ whose elements have diameter less than
		$\epsilon$. Write
		$N_\epsilon=|\mathcal U_\epsilon|.$
		Every element of $	\bigvee_{j=0}^{n-1}S^{-j}\mathcal U_\epsilon$
		has $n$th Bowen diameter less than $\epsilon$. Choosing one point from
		each non-empty element of this join gives $	r_n(Y,\epsilon)
		\leq
		N_\epsilon^n.$
		Consequently,
		\begin{align*}
			s_n(Y,2\epsilon)
			\scl_\alpha(e^{-n})
			\leq
			N_\epsilon^n e^{-n^\alpha}
			=e^{	n\log N_\epsilon-n^\alpha}
			\longrightarrow0
		\end{align*}
		for every $\alpha>1$. Applying this to both
		$(\Sigma_m,\sigma)$ and
		$(\mathscr M(\Sigma_m),\sigma_*)$ gives
		\begin{align*}
			\overline h_{\scl}^{C}
			(\sigma,\Sigma_m)
			\leq1,
			\qquad
			\overline h_{\scl}^{C}
			(\sigma_*,\mathscr M(\Sigma_m))
			\leq1.
		\end{align*}

		For the reverse inequalities, choose $\epsilon_0>0$ such that, for
		every $n\in\mathbb N$, the full shift contains an
		$(n,\epsilon_0)$-separated set of cardinality $m^n$. Thus, for every
		$0<\alpha<1$,
		\begin{align*}
			s_n(\Sigma_m,\epsilon_0)
			\scl_\alpha(e^{-n})
			\geq
			m^n e^{-n^\alpha}=e^{n\log m-n^\alpha}
			\longrightarrow+\infty.
		\end{align*}
		Hence $\overline h_{\scl}^{C}
		(\sigma,\Sigma_m)
		\geq1.$

		The Dirac map
		$x\longmapsto\delta_x$
		embeds $(\Sigma_m,\sigma)$ equivariantly into
		$(\mathscr M(\Sigma_m),\sigma_*)$. By
		Lemma~\ref{lem 5.6},
		\begin{align*}
			\overline h_{\scl}^{C}
			(\sigma_*,\mathscr M(\Sigma_m))
			\geq
			\overline h_{\scl}^{C}
			(\sigma,\Sigma_m)
			\geq1.
		\end{align*}
		Combining the upper and lower estimates completes the proof.
	\end{proof}

	\subsection{Sequence entropy scales and induced measure systems}
	\label{subsec 5.5}

	The preceding subsections use the consecutive observation times
	\(0,1,\ldots,n-1\). Sequence entropy provides a complementary refinement by
	restricting the observations to a prescribed increasing sequence. Qiao and
	Zhou \cite{QZ17} proved that zero topological sequence entropy along each
	fixed sequence is preserved by the induced measure action. We now combine
	their fixed-sequence viewpoint with the scaling framework developed above.

	The main covering argument remains applicable because an
	$(\mathsf S,n,\epsilon)$-orbit segment still contains exactly \(n\)
	observation coordinates, independently of the size of the gaps
	\(t_{j+1}-t_j\). Thus the same logarithmic loss and the same hybrid
	logarithmic--linear condition arise.

	Fix an increasing sequence
	\begin{align*}
		\mathsf S=\{t_1<t_2<\cdots\}\subset\mathbb Z_+.
	\end{align*}
	For \(n\in\mathbb N\), define
	\begin{align*}
		d_n^{\mathsf S}(x,y)
		:=
		\max_{1\leq j\leq n}
		d\bigl(T^{t_j}x,T^{t_j}y\bigr).
	\end{align*}
	A subset $E\subset X$ is called $(\mathsf S,n,\epsilon)$-separated if $	d_n^{\mathsf S}(x,y)\geq\epsilon$
	for every pair of distinct points $x,y\in E$. A subset $F\subset X$ is
	called $(\mathsf S,n,\epsilon)$-spanning if, for every $x\in X$, there
	exists $y\in F$ such that $d_n^{\mathsf S}(x,y)<\epsilon.$
	Denote the maximal cardinality of an $(\mathsf S,n,\epsilon)$-separated
	set and the minimal cardinality of an $(\mathsf S,n,\epsilon)$-spanning
	set by $	s_n^{\mathsf S}(X,d,\epsilon)$ and  $r_n^{\mathsf S}(X,d,\epsilon),$
	respectively. As usual,
	\begin{align}
		\label{equ 5.13}
		s_n^{\mathsf S}(X,d,2\epsilon)
		\leq
		r_n^{\mathsf S}(X,d,\epsilon)
		\leq
		s_n^{\mathsf S}(X,d,\epsilon).
	\end{align}

	For every $\alpha>0$, set
	\begin{align*}
		\overline\Lambda_{\scl_\alpha}^{\mathsf S}
		(T,X,d,\epsilon):=&\limsup_{n\to\infty}
		s_n^{\mathsf S}(X,d,\epsilon)\scl_\alpha(e^{-n}),\\
		\overline\Lambda_{\scl_\alpha}^{\mathsf S}(T,X,d)
		:=&
		\lim_{\epsilon\to0^+}
		\overline\Lambda_{\scl_\alpha}^{\mathsf S}
		(T,X,d,\epsilon).
	\end{align*}
	The scaling property implies that there is a critical
	parameter at which
	$\overline\Lambda_{\scl_\alpha}^{\mathsf S}(T,X,d)$ changes from
	$+\infty$ to $0$.

	\begin{definition}
		\label{def 5.9}
		The \emph{upper capacity sequence entropy scale} of $T$ along
		$\mathsf S$ is defined by
		\begin{align*}
			\overline h_{\scl,\mathsf S}^{C}(T,X)
			:=
			\inf\left\{
			\alpha>0:
			\overline\Lambda_{\scl_\alpha}^{\mathsf S}(T,X,d)=0
			\right\}
			=
			\sup\left\{
			\alpha>0:
			\overline\Lambda_{\scl_\alpha}^{\mathsf S}(T,X,d)=+\infty
			\right\}.
		\end{align*}
	\end{definition}

	The value in Definition~\ref{def 5.9}
	is independent of the choice of a compatible metric. Indeed, this follows
	from the uniform equivalence of compatible metrics on the compact space
	$X$. Relation~\eqref{equ 5.13} also shows that its
	value is unchanged if separated-set cardinalities are replaced by spanning
	numbers.

	The induced-system result below concerns the upper capacity version, since
	Goodman's sequence entropy is itself a capacity-type invariant.

	\begin{remark}
		\label{rem 5.10}
		For the power scaling $\scl_\alpha(\epsilon)=\epsilon^\alpha$, one has
		\begin{align*}
			\overline h_{\scl,\mathsf S}^{C}(T,X)
			=
			h_{top}^{\mathsf S}(T,X),
		\end{align*}
		where $h_{top}^{\mathsf S}(T,X)$ denotes Goodman's
		topological sequence entropy along $\mathsf S$. If $t_j=j-1$, then
		$\overline h_{\scl,\mathsf S}^{C}(T,X)$ reduces to the upper capacity
		entropy scale introduced previously.
	\end{remark}

	We next establish the sequence analogue of the covering estimate for induced
	measure systems. Equip $\mathscr M(X)$ with a compatible weak$^*$ metric $D$
	and define
	\begin{align*}
		D_n^{\mathsf S}(\mu,\nu)
		:=
		\max_{1\leq j\leq n}
		D\bigl(T_*^{t_j}\mu,T_*^{t_j}\nu\bigr).
	\end{align*}

	\begin{lemma}
		\label{lem 5.11}
		For every $\epsilon>0$, there exist $\delta_\epsilon>0$,
		$C_\epsilon>0$, and $n_\epsilon\in\mathbb N$ such that
		\begin{align}
			\label{equ 5.14}
			r_n^{\mathsf S}
			\bigl(\mathscr M(X),D,\epsilon\bigr)
			\leq
			\left(
			r_n^{\mathsf S}(X,d,\delta_\epsilon)
			\right)^{C_\epsilon\log(n+1)}
		\end{align}
		for every $n\geq n_\epsilon$.
	\end{lemma}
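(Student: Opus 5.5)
The plan is to repeat the proof of Lemma~\ref{lem 5.3} verbatim, replacing the consecutive times $0,\ldots,n-1$ by the observation times $t_1,\ldots,t_n$. The key point is that only the \emph{number} of observed times enters the probabilistic estimate, never their size. Fix $\epsilon>0$ and choose $q=q(\epsilon)$ and $\eta=\eta(\epsilon)$ as in \eqref{equ 5.1}. By uniform continuity of $f_1,\ldots,f_q$, choose $\delta_\epsilon$ as in \eqref{equ 5.2}. Then $d(x,y)<\delta_\epsilon$ forces $|f_k(x)-f_k(y)|<\eta$ for $k\le q$, and hence $D(\delta_x,\delta_y)$-type differences for empirical measures stay below $\epsilon/2$.

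\textbf{Sampling step.} Given $\mu\in\mathscr M(X)$ and $n$, consider the finite family
\[
\mathcal F_n^{\mathsf S}=\{f_k\circ T^{t_j}:1\le k\le q,\ 1\le j\le n\}, \qquad |\mathcal F_n^{\mathsf S}|\le qn.
\]
Take $m=\lceil \log(4qn)/c_\epsilon\rceil$ with $c_\epsilon=\eta^2/2$, and let $x_1,\ldots,x_m$ be i.i.d.\ with law $\mu$. Hoeffding's inequality plus a union bound over $\mathcal F_n^{\mathsf S}$ gives a realization whose empirical measure $\mu_0$ satisfies
\[
\Bigl|\int f_k\circ T^{t_j}\,d\mu_0-\int f_k\circ T^{t_j}\,d\mu\Bigr|\le\eta \quad\text{for all } k\le q,\ j\le n.
\]
Since $\int f\,d(T_*^{t}\nu)=\int f\circ T^{t}\,d\nu$, \eqref{equ 5.1} yields $D(T_*^{t_j}\mu,T_*^{t_j}\mu_0)<\epsilon/2$ for all $1\le j\le n$.

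\textbf{Discretization step.} Let $E_n$ be a minimal $(\mathsf S,n,\delta_\epsilon)$-spanning set of $X$. For each $\ell$, pick $y_\ell\in E_n$ with $d_n^{\mathsf S}(x_\ell,y_\ell)<\delta_\epsilon$, and set $\nu=\frac1m\sum_\ell\delta_{y_\ell}$. Then $d(T^{t_j}x_\ell,T^{t_j}y_\ell)<\delta_\epsilon$ for each $j$, so \eqref{equ 5.2} and \eqref{equ 5.1} give $D(T_*^{t_j}\mu_0,T_*^{t_j}\nu)<\epsilon/2$. Combining the two steps, $D_n^{\mathsf S}(\mu,\nu)<\epsilon$.

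\textbf{Counting step.} The measures $\nu$ range over at most $|E_n|^m=r_n^{\mathsf S}(X,d,\delta_\epsilon)^m$ possibilities. Since $m\le C_\epsilon\log(n+1)$ for $n\ge n_\epsilon$, this gives \eqref{equ 5.14}.

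The only place requiring care is the sampling step. There one must check that the union bound runs over $qn$ functions rather than $q\,t_n$, which would spoil the logarithmic rate when $\mathsf S$ has large gaps. This holds because the family $\mathcal F_n^{\mathsf S}$ is indexed by $j\le n$, not by $t_j$. Everything else is routine.
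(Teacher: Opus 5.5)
Your proposal is correct and is the paper's own proof: it reruns the Hoeffding/empirical-measure argument of Lemma~\ref{lem 5.3} with $T^j$ replaced by $T^{t_j}$, applies the union bound to the $qn$ functions $f_k\circ T^{t_j}$, discretizes the sample points with a minimal $(\mathsf S,n,\delta_\epsilon)$-spanning set, and counts at most $|E_n|^m$ empirical measures. Your observation that the union bound involves $qn$ functions rather than $qt_n$ is the point the paper stresses, namely that each orbit segment has exactly $n$ observation coordinates whatever the gaps $t_{j+1}-t_j$.
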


	\begin{proof}
		The proof is the same as that of the covering estimate for the ordinary
		induced system, with the consecutive iterates $T^j$ replaced by the
		observation times $T^{t_j}$. We include the main argument.

		Choose finitely many functions $f_1,\ldots,f_q\in C(X)$ that control
		the weak-$*$ metric $D$ up to an error smaller than $\epsilon/4$.
		Choose $\eta_\epsilon>0$ and $\delta_\epsilon>0$ such that
		\begin{align*}
			d(x,y)<\delta_\epsilon
			\quad\Longrightarrow\quad
			|f_k(x)-f_k(y)|<\eta_\epsilon
		\end{align*}
		for $1\leq k\leq q$. Let $E_n$ be an
		$(\mathsf S,n,\delta_\epsilon)$-spanning subset of $X$ with
		\begin{align*}
			|E_n|=r_n^{\mathsf S}(X,d,\delta_\epsilon).
		\end{align*}

		Applying the empirical-measure approximation simultaneously to the
		$qn$ functions
		\begin{align*}
			f_k\circ T^{t_j},
			\qquad
			1\leq k\leq q,\quad 1\leq j\leq n,
		\end{align*}
		shows that every $\mu\in\mathscr M(X)$ can be approximated in the
		metric $D_n^{\mathsf S}$ by an empirical measure supported on at most
		$C_\epsilon\log(n+1)$ points of $E_n$. Repetitions of support points are
		allowed. The number of these empirical measures is therefore at most
		\begin{align*}
			|E_n|^{C_\epsilon\log(n+1)}.
		\end{align*}
		They form an $(\mathsf S,n,\epsilon)$-spanning subset of
		$\mathscr M(X)$, which proves
		\eqref{equ 5.14}.
	\end{proof}

	Recall that $a_\alpha(n):=-\log\scl_\alpha(e^{-n})$
	and
	\begin{align*}
		\mathcal B_{\log}
		:=
		\left\{
		\alpha>0:
		\begin{array}{l}
			\text{there is no }\beta\in(0,\alpha)\text{ such that}
			\lim_{n\to\infty}
			\frac{a_\beta(n)\log(n+1)}{a_\alpha(n)}=0
		\end{array}
		\right\}.
	\end{align*}

	We now prove Theorem~\ref{thm 1.5}.
	\begin{proof}[Proof of Theorem~\ref{thm 1.5}]
		The implication from the induced system to the original system follows
		from the equivariant Dirac embedding exactly as in Step~2 of the proof of
		Theorem~\ref{thm 1.3}.

		For the converse, suppose that $\overline h_{\scl,\mathsf S}^{C}(T,X)=0.$
		Using the definitional equivalence of spanning sets and separated sets, for every $\beta>0$ and every
		$\eta>0$, one has
		$$\limsup_{n\to\infty}
		r_n^{\mathsf S}(X,d,\eta)\scl_\beta(e^{-n})=0.$$
		Consequently, for all sufficiently large $n$,
		\begin{align}
			\label{equ 5.15}
			\log r_n^{\mathsf S}(X,d,\eta)
			\leq a_\beta(n).
		\end{align}

		Fix $\alpha>0$ and $\epsilon>0$. We distinguish two cases.
		\begin{case1}
			$\alpha\notin\mathcal B_{\log}$.
		\end{case1}

		There exists $\beta\in(0,\alpha)$ such that $\frac{a_\beta(n)\log(n+1)}{a_\alpha(n)}
		\longrightarrow0.$
		By Lemma~\ref{lem 5.11} and
		\eqref{equ 5.15},
		\begin{align*}
			\log r_n^{\mathsf S}
			\bigl(\mathscr M(X),D,\epsilon\bigr)
			&\leq
			C_\epsilon\log(n+1)
			\log r_n^{\mathsf S}(X,d,\delta_\epsilon)\\
			&\leq
			C_\epsilon a_\beta(n)\log(n+1)
			\leq
			\frac12a_\alpha(n)
		\end{align*}
		for all sufficiently large $n$. Therefore,
		\begin{align*}
			r_n^{\mathsf S}
			\bigl(\mathscr M(X),D,\epsilon\bigr)
			\scl_\alpha(e^{-n})
			\leq e^{-\frac12a_\alpha(n)}
			\longrightarrow0.
		\end{align*}

		\begin{case2}
			$\alpha\in\mathcal B_{\log}$.
		\end{case2}
		By the first part of Condition~\ref{cond 2}, $$\inf_{\gamma>0}
		\limsup_{n\to\infty}\frac{a_\gamma(n)}{n}=0.$$
		Combining this with \eqref{equ 5.15} gives $$\limsup_{n\to\infty}
		\frac1n\log r_n^{\mathsf S}(X,d,\eta)=0$$
		for every $\eta>0$.
		Although Qiao and Zhou \cite[Theorem~1(1)]{QZ17} stated their result
		for homeomorphisms, invertibility is not used in its proof. Indeed, the
		argument involves only the positive orbit coordinates
		$T^{t_1},\ldots,T^{t_n}$, inverse images of open covers under these
		iterates, and the continuity of $T$. Therefore, the same argument remains
		valid for continuous self-maps. Applying this continuous-map version, we
		obtain
		\begin{align*}
			h_{\mathrm{top}}^{\mathsf S}
			\bigl(T_*,\mathscr M(X)\bigr)=0.
		\end{align*}

		By the second part of Condition~\ref{cond 2},
		$c_\alpha
		:=
		\liminf_{n\to\infty}\frac{a_\alpha(n)}n>0.$
		Hence, for all sufficiently large $n$,
		$a_\alpha(n)\geq\frac{c_\alpha}{2}n
		$
		and
		\begin{align*}
			\log r_n^{\mathsf S}
			\bigl(\mathscr M(X),D,\epsilon\bigr)
			\leq\frac{c_\alpha}{4}n.
		\end{align*}
		It follows that
		\begin{align*}
			r_n^{\mathsf S}
			\bigl(\mathscr M(X),D,\epsilon\bigr)
			\scl_\alpha(e^{-n})
			\leq e^{-\frac{c_\alpha}{4}n}
			\longrightarrow0.
		\end{align*}

		Since $\alpha>0$ and $\epsilon>0$ were arbitrary, we conclude that
		\begin{align*}
			\overline h_{\scl,\mathsf S}^{C}
			\bigl(T_*,\mathscr M(X)\bigr)=0.
		\end{align*}
	\end{proof}

	The second assertion of \cite[Theorem~1]{QZ17} can also be extended in a
	natural way. For an increasing sequence
	$\mathsf S=\{t_1<t_2<\cdots\}$, define its upper dimension by
	\begin{align*}
		\overline D(\mathsf S)
		:=
		\inf\left\{
		\tau\geq0:
		\limsup_{n\to\infty}\frac{n}{t_n^\tau}=0
		\right\}.
	\end{align*}
	Let $	\mathcal P_{\scl}(X,T)
	:=
	\left\{
	\mathsf S:
	\overline h_{\scl,\mathsf S}^{C}(T,X)>0
	\right\}$
	and define the \emph{upper positive-sequence dimension associated with
		$\scl$} by
	\begin{align*}
		\overline D_{\scl}^{\mathrm{seq}}(X,T)
		:=
		\begin{cases}
			\displaystyle
			\sup_{\mathsf S\in\mathcal P_{\scl}(X,T)}
			\overline D(\mathsf S),
			&\mathcal P_{\scl}(X,T)\neq\emptyset,\\[2mm]
			0,
			&\mathcal P_{\scl}(X,T)=\emptyset.
		\end{cases}
	\end{align*}

	\begin{corollary}
		\label{cor 5.12}
		Under the assumptions of
		Theorem~\ref{thm 1.5},
		\begin{align*}
			\overline D_{\scl}^{\mathrm{seq}}(X,T)
			=
			\overline D_{\scl}^{\mathrm{seq}}
			\bigl(\mathscr M(X),T_*\bigr).
		\end{align*}
	\end{corollary}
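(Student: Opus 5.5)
The corollary should follow almost formally from Theorem~\ref{thm 1.5}, applied sequence by sequence. I would show that the two index sets coincide,
\begin{align*}
\mathcal P_{\scl}(X,T)=\mathcal P_{\scl}\bigl(\mathscr M(X),T_*\bigr).
\end{align*}
Once this is known, both the supremum defining $\overline D_{\scl}^{\mathrm{seq}}$ and the convention for the empty case are taken over the same family of sequences. Since $\overline D(\mathsf S)$ depends only on $\mathsf S$ and not on the system, the two upper positive-sequence dimensions are then equal.

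To prove the equality of index sets, I fix an arbitrary increasing sequence $\mathsf S\subset\mathbb Z_+$. Condition~\ref{cond 2} concerns only the scaling family and not $\mathsf S$, so it is in force for every such sequence. Theorem~\ref{thm 1.5}, in its positivity form, then gives
\begin{align*}
\overline h_{\scl,\mathsf S}^{C}(T,X)>0
\iff
\overline h_{\scl,\mathsf S}^{C}\bigl(T_*,\mathscr M(X)\bigr)>0.
\end{align*}
This says exactly that $\mathsf S\in\mathcal P_{\scl}(X,T)$ if and only if $\mathsf S\in\mathcal P_{\scl}(\mathscr M(X),T_*)$. Because $\mathsf S$ was arbitrary, the two sets are equal. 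In particular one is empty exactly when the other is, so in that case both dimensions equal $0$; otherwise they are suprema of the same function over the same nonempty set.

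There is no serious obstacle; the content lies entirely in Theorem~\ref{thm 1.5}. The one point I would make explicit is that the sequence entropy scale of $(\mathscr M(X),T_*)$ does not depend on the choice of compatible weak$^*$ metric $D$. This follows from the metric-independence remark after Definition~\ref{def 5.9}, and it ensures that $\mathcal P_{\scl}(\mathscr M(X),T_*)$ is well defined.
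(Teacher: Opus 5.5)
Your proposal is correct and follows the paper's proof: Theorem~\ref{thm 1.5} is applied to each increasing sequence $\mathsf S$, which is legitimate because Condition~\ref{cond 2} does not involve $\mathsf S$. This gives $\mathcal P_{\scl}(X,T)=\mathcal P_{\scl}(\mathscr M(X),T_*)$, and the equality of the dimensions then follows directly from the definition. Your remarks on the empty case and on metric independence are extra detail that the paper leaves implicit.
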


	\begin{proof}
		Theorem~\ref{thm 1.5} gives
		\(
		\mathcal P_{\scl}(X,T)
		=
		\mathcal P_{\scl}(\mathscr M(X),T_*).
		\)
		The conclusion follows directly from the definition of
		\(\overline D_{\scl}^{\mathrm{seq}}\).
	\end{proof}

	For the power scaling
	\(\scl_\alpha(\epsilon)=\epsilon^\alpha\),
	Remark~\ref{rem 5.10} and the characterization of upper
	entropy dimension by positive sequence entropy give
	\begin{align*}
		\overline D_{\scl}^{\mathrm{seq}}(X,T)
		=
		\overline D(X,T).
	\end{align*}
	Consequently, Theorem~\ref{thm 1.5} and
	Corollary~\ref{cor 5.12} recover respectively the two
	assertions of \cite[Theorem~1]{QZ17} in the classical power-scale case.

	\section{Examples and applications of entropy scales}
	\label{sec 6}

	In this section, we illustrate how the preceding constructions recover
	classical invariants and detect finer regimes of orbit complexity. We first
	discuss several familiar entropy quantities and then compute entropy scales
	for full shifts, natural codings of interval exchange transformations, and
	maps in the period-doubling cascade of the logistic family.

	\subsection{Classical entropy, sequence entropy, slow entropy and entropy
		dimension as special cases}

	We first identify several classical and scale-dependent invariants as
	special cases of the entropy-scale and sequence-entropy-scale constructions.

	\begin{itemize}

		\item \emph{Classical Bowen entropy.}
		Consider the power scaling $\scl_{\alpha}(\epsilon)=\epsilon^\alpha $.
		Then $\scl_\alpha(e^{-n})=e^{-\alpha n}.$
		The critical exponent in Definition~\ref{def 2.5} coincides with the classical Bowen topological entropy:
		\begin{align*}
			h_{\scl}^{B}(T,K)=h_{top}^{B}(T,K).
		\end{align*}
		Similarly, the corresponding local entropy scale reduces to the classical
		Brin--Katok local entropy. Therefore, Theorem~\ref{thm 1.1} reduces to the
		variational principle of Feng and Huang \cite{FH12} in this special case.
		\\
		\item \emph{Topological sequence entropy.}
		Fix an increasing sequence
		\(
		\mathsf S=\{t_1<t_2<\cdots\}\subset\mathbb Z_+.
		\)
		For the power scaling $	\scl_\alpha(\epsilon)=\epsilon^\alpha,$
		one has
		\begin{align*}
			s_n^{\mathsf S}(X,d,\epsilon)
			\scl_\alpha(e^{-n})
			=e^{	\log s_n^{\mathsf S}(X,d,\epsilon)-\alpha n}.
		\end{align*}
		It follows that
		\begin{align*}
			\overline h_{\scl,\mathsf S}^{C}(T,X)
			=
			h_{top}^{\mathsf S}(T,X),
		\end{align*}
		where \(h_{top}^{\mathsf S}(T,X)\) denotes Goodman's topological
		sequence entropy along \(\mathsf S\) \cite{Goo74}. If \(t_j=j-1\), this
		reduces to the ordinary upper capacity entropy scale and hence to the
		classical topological entropy.
		\\
		\item \emph{Slow-entropy-type quantities.}
		Consider the logarithmic scaling $\scl_\alpha(\epsilon)
		=	\left(\log\frac{1}{\epsilon}\right)^{-\alpha}.$
		Then $\scl_\alpha(e^{-n})=n^{-\alpha}.$
		At the upper-capacity level, this is precisely the case
		$f(n)=\log n$ of Remark~\ref{rem 2.9}, since
		\begin{align*}
			n^{-\alpha}
			=e^{-\alpha\log n}.
		\end{align*}
		Thus the corresponding separated-set invariant agrees with Galatolo's
		generalized topological entropy $h^{\log n}(T)$ \cite{Gal03}. The Bowen
		entropy scale appearing in Theorem~\ref{thm 1.1}, however, is its
		variable-length Carath\'{e}odory counterpart.	Hence the associated entropy scale is determined by
		\begin{align*}
			M_{\scl_\alpha}(T,K,\epsilon,N)
			=
			\inf\sum_i n_i^{-\alpha}.
		\end{align*}
		Its critical exponent defines a slow-entropy-type quantity that detects
		polynomial orbit-complexity growth in zero-entropy systems; see
		\cite{KT89,Fer97,KC14}. The variational principle in
		Theorem~\ref{thm 1.1} applies to this logarithmic scaling.

		On the other hand, the induced-system result in
		Theorem~\ref{thm 1.3} does not apply because the hybrid
		logarithmic--linear condition is not satisfied; see
		Remark~\ref{rem 5.5}. This is an absence of applicability,
		rather than a counterexample for the standard normalization
		\(\scl_\alpha(e^{-n})=n^{-\alpha}\). The counterexample in
		Example~\ref{ex 5.2} concerns instead the shifted logarithmic family
		\(\scl_\alpha(e^{-n})=n^{-(1+\alpha)}\).
		\\
		\item \emph{Entropy-dimension-type quantities.}
		For \(0<\epsilon<e^{-1}\) and \(\alpha>0\), consider
		$\scl_\alpha(\epsilon)
		=e^{-\left(\log\frac{1}{\epsilon}\right)^\alpha}$.
		Then
		$\scl_\alpha(e^{-n})=e^{-n^\alpha}.$
		The parameter \(\alpha=1\) corresponds to exponential order, whereas
		\(0<\alpha<1\) describes intermediate growth between polynomial and
		exponential complexity. In the Bowen construction, the corresponding
		Carath\'eodory sums take the form
		$\sum_i e^{-n_i^\alpha},$
		and yield Bowen-type entropy-dimension quantities. For the upper capacity
		construction, the critical parameter is determined by the growth of
		\begin{align*}
			\frac{\log s_n(X,d,\epsilon)}{n^\alpha},
		\end{align*}
		and agrees with the usual upper entropy dimension; see \cite{DHP11}. For comparison, fixing $s>0$ and taking $f(n)=n^s$ in Galatolo's
		construction \cite{Gal03} measures the coefficient of orbit growth at the
		prescribed order $n^s$, whereas the family
		$\scl_\alpha(e^{-n})=e^{-n^\alpha}$ uses $\alpha$ itself as the critical
		growth order.
	\end{itemize}

	These examples show that the scaling gauge and the observation sequence
	provide two complementary ways of refining classical entropy: the former
	changes the asymptotic growth regime, while the latter changes the times at
	which orbit complexity is observed.

	\subsection{Entropy scales of full shifts}

	We next consider a symbolic example for which the entropy scales can
	be computed explicitly. Let $\Sigma=\{1,\ldots,m\}^{\mathbb N}$ with $m\geq2$
	and let \(\sigma:\Sigma\to\Sigma\) be the one-sided shift
	equipped  with the standard metric
	\begin{align*}
		d(x,y)=2^{-N(x,y)},
	\end{align*}
	where $	N(x,y)
	=
	\min\{k\geq0:x_k\neq y_k\},$
	with the convention that $d(x,x)=0$.
	Fix $0<\epsilon<1$ and define
	\begin{align*}
		q_\epsilon
		:=
		\min\{q\geq1:2^{-q}<\epsilon\}.
	\end{align*}
	By the definition of \(q_\epsilon\), two points \(x,y\in\Sigma\) satisfy
	\(d_n(x,y)<\epsilon\) if and only if
	\begin{align*}
		x_k=y_k
		\qquad
		\text{for every }0\leq k\leq n+q_\epsilon-2.
	\end{align*}
	Thus every \((n,\epsilon)\)-Bowen ball is precisely a cylinder determined
	by a word of length \(n+q_\epsilon-1\). Moreover, points belonging to
	distinct such cylinders satisfy \(d_n(x,y)\geq\epsilon\). Consequently,
	\begin{align}
		\label{equ 6.1}
		r_n(\Sigma,\epsilon)
		=
		s_n(\Sigma,\epsilon)
		=
		m^{n+q_\epsilon-1}.
	\end{align}

	We first consider the power scaling
	$\scl_\alpha(e^{-n})=e^{-\alpha n}.$
	By \eqref{equ 6.1}, one has $s_n(\Sigma,\epsilon)\cdot \scl_\alpha(e^{-n})
	=
	m^{q_\epsilon-1}e^{(\log m-\alpha)n}.$
	Since the factor $m^{q_\epsilon-1}$ is independent of $n$, the
	critical value is $\overline h_{\scl}^{C}(\sigma,\Sigma)
	=
	\log m.$
	Thus the entropy scale recovers the classical topological entropy
	of the full shift.

	Next, consider the logarithmic scaling
	$\scl_\alpha(e^{-n})=n^{-\alpha}.$
	In this case,
	\begin{align*}
		s_n(\Sigma,\epsilon)\scl_\alpha(e^{-n})
		=
		m^{n+q_\epsilon-1}n^{-\alpha}
		\longrightarrow+\infty
	\end{align*}
	for every $\alpha>0$. Hence, $\overline h_{\scl}^{C}(\sigma,\Sigma)
	=+\infty.$
	This reflects the fact that polynomial scaling cannot compensate
	for the exponential orbit growth of the full shift.

	The same argument also describes the sequence entropy scales of the full
	shift. For an increasing sequence
	\(
	\mathsf S=\{t_1<t_2<\cdots\},
	\)
	set
	\begin{align*}
		I_{n,\epsilon}^{\mathsf S}
		:=
		\bigcup_{j=1}^{n}
		\{t_j,t_j+1,\ldots,t_j+q_\epsilon-1\}.
	\end{align*}
	Then
	\begin{align*}
		r_n^{\mathsf S}(\Sigma,\epsilon)
		=
		s_n^{\mathsf S}(\Sigma,\epsilon)
		=
		m^{|I_{n,\epsilon}^{\mathsf S}|}.
	\end{align*}
	Thus the sequence entropy records the overlap pattern of the observation
	windows. For \(t_j=j-1\), one has
	\(
	|I_{n,\epsilon}^{\mathsf S}|=n+q_\epsilon-1,
	\)
	and the preceding computation is recovered.

	\subsection{Interval-exchange codings}

	Let \(T\) be a regular \(k\)-interval exchange transformation, where
	\(k\geq2\). Thus the forward orbits of its discontinuity points are infinite
	and pairwise disjoint. Although \(T\) is generally discontinuous on the
	interval, its natural one-sided coding gives a compact symbolic system
	\((X_T,\sigma)\). The language of this coding has word complexity
	\begin{align*}
		p_T(\ell)=(k-1)\ell+1,
	\end{align*}
	for every \(\ell\geq1\); see \cite{FZ08}.

	Equip \(X_T\) with the symbolic metric used in the preceding subsection.
	For fixed \(0<\epsilon<1\), an \((n,\epsilon)\)-Bowen ball is determined by
	an admissible word of length \(n+q_\epsilon-1\). Hence
	\begin{align}
		\label{equ 6.2}
		r_n(X_T,\epsilon)
		=
		s_n(X_T,\epsilon)
		=
		p_T(n+q_\epsilon-1)
		=
		(k-1)(n+q_\epsilon-1)+1.
	\end{align}
	In particular, \((X_T,\sigma)\) has zero topological entropy. For the
	logarithmic scaling
	\begin{align*}
		\scl_\alpha(e^{-n})=n^{-\alpha},
	\end{align*}
	\eqref{equ 6.2} gives
	\begin{align*}
		s_n(X_T,\epsilon)\scl_\alpha(e^{-n})
		\longrightarrow
		\begin{cases}
			+\infty, & 0<\alpha<1,\\
			k-1,     & \alpha=1,\\
			0,       & \alpha>1.
		\end{cases}
	\end{align*}
	Consequently,
	\begin{align*}
		\overline h_{\scl}^{C}(\sigma,X_T)=1.
	\end{align*}
	For \(k=2\), the natural coding is Sturmian and
	\(p_T(\ell)=\ell+1\), so this computation recovers the classical Sturmian
	case \cite{MH40}. Thus the logarithmic entropy scale detects the common
	linear order of these coding complexities, while the value at the critical
	parameter retains the coefficient \(k-1\).

	\subsection{Entropy scales along the period-doubling cascade}

	Consider the logistic family
	\begin{align*}
		f_\lambda:[0,1]\longrightarrow[0,1],
		\qquad
		f_\lambda(x)=\lambda x(1-x),
		\qquad 0\leq\lambda\leq4.
	\end{align*}
	For the logarithmic scaling
	\(\scl_\alpha(e^{-n})=n^{-\alpha}\), the upper capacity entropy scale is
	the polynomial entropy:
	\begin{align}
		\label{equ 6.3}
		\overline h_{\scl}^{C}(f_\lambda,[0,1])
		=
		h_{\mathrm{pol}}(f_\lambda).
	\end{align}
	Indeed, this is the specialization \(f(n)=\log n\) of
	Remark~\ref{rem 2.9}.

	Let \(\lambda_\infty\) denote the Feigenbaum parameter. If
	\(1<\lambda<\lambda_\infty\) and \(f_\lambda\) has an attracting cycle of
	period \(2^r\), then
	\begin{align*}
		h_{\mathrm{top}}(f_\lambda)=0,
		\qquad
		h_{\mathrm{pol}}(f_\lambda)=r+1.
	\end{align*}
	At the accumulation parameter, one has
	\begin{align*}
		h_{\mathrm{top}}(f_{\lambda_\infty})=0,
		\qquad
		h_{\mathrm{pol}}(f_{\lambda_\infty})=+\infty;
	\end{align*}
	see \cite{RRS24,GGM23}. Therefore, \eqref{equ 6.3}
	shows that the logarithmic entropy scale assumes the values
	\(1,2,3,\ldots\) along the period-doubling cascade and becomes infinite at
	the onset of chaos, although the classical topological entropy remains zero.

	This example also exhibits a distinction between topological and
	measure-theoretic complexity. Galatolo \cite[Theorem~2]{Gal07} proved that
	every invariant probability measure supported on the Feigenbaum attractor
	has zero generalized metric complexity for every prescribed growth
	function. Thus the non-trivial topological complexity of the interval map is
	generated outside the measure-theoretically regular dynamics on the
	attractor.

	\begin{remark}
		The natural coding above is used because an interval exchange
		transformation is discontinuous at the partition boundaries. The same
		issue arises for general piecewise isometries and for the Casati--Prosen
		map; see \cite{Buz01,Gal07}. Intermittent maps may also involve natural
		infinite invariant measures. Treating these systems directly would
		require extending the present framework to discontinuous maps or to
		infinite-measure dynamics, and is left for future work.
	\end{remark}

	\subsection*{Acknowledgments}
	\noindent	The authors are grateful to Professor Stefano Galatolo for his valuable
	comments and suggestions. This work was supported by the
	National Natural Science Foundation of China (No.~12471184). The third author
	was also supported by the Qinglan Project of Jiangsu Province, China.

\end{document}